\documentclass[12pt,a4paper]{amsart}
\usepackage{amsmath,amsthm,amssymb}
\usepackage{color}
\usepackage[top=30truemm,bottom=30truemm,left=25truemm,right=25truemm]{geometry}
\usepackage{ytableau}
\ytableausetup{boxsize=normal}

\usepackage{tikz}
\usetikzlibrary{decorations.pathreplacing}
\usepackage{hyperref}

\newtheorem{thm}{Theorem}[subsection]
\newtheorem{lem}[thm]{Lemma}
\newtheorem{prop}[thm]{Proposition}
\newtheorem{cor}[thm]{Corollary}

\theoremstyle{definition}
\newtheorem{defi}[thm]{Definition}
\newtheorem{rem}[thm]{Remark}

\newtheorem{ex}[thm]{Example}

\author{Hideya Watanabe}
\address{(H. Watanabe) College of Science, Rikkyo University, 3-34-1, Nishi-Ikebukuro, Toshima-ku, Tokyo, 171-8501, Japan}
\email{watanabehideya@gmail.com}
\date{\today}

\title{Berele row-insertion and quantum symmetric pairs}
\subjclass[2020]{Primary 17B37; Secondary 17B10, 05E10}
\keywords{Berele row-insertion, quantum symmetric pair, symplectic tableau, oscillating tableau, Robinson--Schensted--Knuth correspondence, combinatorial representation theory}

\begin{document}
\maketitle

\begin{abstract}
  The Berele row-insertion is a symplectic analogue of the Schensted row-insertion.
  In the present paper, we provide it with a representation theoretical interpretation via the quantum symmetric pairs of type $A\mathrm{II}$.
  As applications, we lift Berele's Robinson--Schensted correspondence and Kobayashi--Matsumura's Robinson--Schensted--Knuth (RSK for short) correspondence to isomorphisms of representations over a quantum symmetric pair coideal subalgebra, and establish the dual RSK correspondence of type $A\mathrm{II}$.
\end{abstract}


\section{Introduction}
\subsection{Row-insertion and the Berele row-insertion}
It has been known for a long time that combinatorics of semistandard tableaux have intimate connections with representation theory of the general linear Lie algebras $\mathfrak{gl}_n$ over $\mathbb{C}$.
For example, the finite-dimensional irreducible representations of $\mathfrak{gl}_n$ are parametrized by the partitions of length at most $n$, and the representation corresponding to a partition $\lambda$ has a distinguished basis parametrized by the semistandard tableaux of shape $\lambda$ with letters in $\{ 1,\dots,n \}$.

The row-insertion, discovered by Schensted, is an algorithm which takes a pair $(T,x)$ consisting of a semistandard tableau $T$ and a positive integer $x$ as input, and outputs a new semistandard tableau $T \leftarrow x$.
From a representation theoretical point of view, this algorithm can be considered to describe the irreducible decompositions of the tensor products of a finite-dimensional irreducible representation and the vector representation of $\mathfrak{gl}_n$.
The row-insertion is a building block for other algorithms related to representation theory such as the Robinson--Schensted (RS for short) correspondence, Robinson--Schensted--Knuth (RSK for short) correspondence, and dual RSK correspondence (see e.g., \cite{Ful97} for details).

In representation theory of the symplectic Lie algebras $\mathfrak{sp}_{2n}$ over $\mathbb{C}$, the symplectic tableaux (a.k.a.\,the King tableaux) parametrize bases of the finite-dimensional irreducible representations \cite{Kin76}.

Berele \cite{Ber86} discovered a symplectic analogue of the row-insertion.
This algorithm takes a pair $(T,x)$ consisting of a symplectic tableau $T$ and a positive integer $x$ as input, and outputs a new symplectic tableau $T \xleftarrow{\mathrm{B}} x$.
As in the $\mathfrak{gl}_n$ case, the Berele row-insertion describes the irreducible decompositions of the tensor products of a finite-dimensional irreducible representation and the vector representation of $\mathfrak{sp}_{2n}$.
In \cite{Ber86}, Berele used this row-insertion to obtain an RS-type correspondence.
Recently, Kobayashi and Matsumura \cite{KoMa25} established an RSK-type correspondence based on the Berele row-insertion.

\subsection{Quantum groups}
Since the birth of the quantum groups (a.k.a.\,the quantized enveloping algebras) and crystals around 1990, various results in representation theory of $\mathfrak{gl}_n$ have been quantized.
In particular, the algorithms mentioned above have turned out to be isomorphisms of crystals and have been lifted to isomorphisms of representations of the quantum group $U_q(\mathfrak{gl}_n)$; see a survey paper \cite{Kwo09b}.

Based on crystal theory, Kashiwara and Nakashima \cite{KaNa94} introduced new classes of tableaux for representation theory of $\mathfrak{sp}_{2n}$ and the special orthogonal Lie algebras $\mathfrak{so}_n$.
In particular, the Kashiwara--Nakashima tableaux of type $C$, i.e., for $\mathfrak{sp}_{2n}$, are different from the symplectic tableaux.
A row-insertion type algorithm for the Kashiwara--Nakashima tableaux of type $C$ was introduced and studied in \cite{Lec02}.
Of course, it differs from the Berele row-insertion.

Thus, the roles of the symplectic tableaux and Berele row-insertion in the context of quantum groups have been missing.

\subsection{Quantum symmetric pairs}
In a recent paper \cite{Wat23f}, it turned out that the symplectic tableaux naturally appear and work well in representation theory of the quantum symmetric pairs $(U_q(\mathfrak{gl}_{2n}), U^\imath(\mathfrak{sp}_{2n}))$ of type $A\mathrm{II}$ (see also \cite{NaSuWa25}).
Here, $U^\imath(\mathfrak{sp}_{2n})$ is a quantization of $\mathfrak{sp}_{2n}$ that is different from the quantum group $U_q(\mathfrak{sp}_{2n})$ of type $C$.
Unlike $U_q(\mathfrak{sp}_{2n})$, the algebra $U^\imath(\mathfrak{sp}_{2n})$ is a subalgebra of $U_q(\mathfrak{gl}_{2n})$.
Therefore, each $U_q(\mathfrak{gl}_{2n})$-module can be regarded as a $U^\imath(\mathfrak{sp}_{2n})$-module via the embedding.
By ``representation theory of $(U_q(\mathfrak{gl}_{2n}),U^\imath(\mathfrak{sp}_{2n}))$'', we mean the study of such $U^\imath(\mathfrak{sp}_{2n})$-modules.
For a general theory of quantum symmetric pairs, we refer the reader to \cite{Kol14}.

Let $\mathrm{Par}_{\leq n}$ denote the set of partitions of length at most $n$.
The finite-dimensional irreducible representations of $\mathfrak{sp}_{2n}$ are parametrized by $\mathrm{Par}_{\leq n}$.
Each such representation can be quantized to a finite-dimensional irreducible representation of $U^\imath(\mathfrak{sp}_{2n})$ \cite{Mol06}, \cite{Wat21b}.
Let $V^\imath(\nu)$ denote the corresponding representation.
It was proved in \cite{Wat23f} that $V^\imath(\nu)$ has a distinguished basis of the form
\[
  \{ b^\imath_T \mid T \in Sp\mathrm{T}_{2n}(\nu) \},
\]
where $Sp\mathrm{T}_{2n}(\nu)$ denotes the set of symplectic tableaux of shape $\nu$.
This basis has several good properties in common with the canonical basis of a finite-dimensional irreducible representation $V(\lambda)$ of $U_q(\mathfrak{gl}_{2n})$; $\lambda \in \mathrm{Par}_{\leq 2n}$ represents the highest weight of the representation.

In \cite{Wat23f}, an algorithm that transforms each semistandard tableau $T$ into a symplectic tableau $P(T)$ was introduced.
It describes the irreducible decompositions of the finite-dimensional irreducible representation of $U_q(\mathfrak{gl}_{2n})$ as a $U^\imath(\mathfrak{sp}_{2n})$-module.
Hence, one can naturally define an analogue of the row-insertion by
\[
  T \xleftarrow{A\mathrm{II}} x := P(T \leftarrow x).
\]
We call it the row-insertion of type $A\mathrm{II}$.

\subsection{Results}
The main result in the present paper is that the row-insertion of type $A\mathrm{II}$ coincides with the Berele row-insertion (Theorem \ref{thm: row ins AII = ber row ins}):
\[
  T \xleftarrow{A\mathrm{II}} x = T \xleftarrow{\mathrm{B}} x.
\]
This suggests that the symplectic tableaux and the Berele row-insertion should be thought of as a type $A\mathrm{II}$ analogue of the semistandard tableaux and the Schensted row-insertion, but not type $C$.
We prove this result by rewriting the two algorithms in terms of the Schensted row-insertion and the Sch\"{u}tzenberger sliding.

As applications, we lift the Berele RS correspondence and the Kobayashi--Matsumura RSK correspondence to $U^\imath(\mathfrak{sp}_{2n})$-isomorphisms (Theorems \ref{thm: rs aii} and \ref{thm: rsk aii}).
Namely, there exist $\mathbf{U}^\imath$-module isomorphisms
\[
  \mathrm{RS}^{A\mathrm{II}} : V^\imath(\nu) \otimes V(1)^{\otimes N} \to \bigoplus_{\xi \in \mathrm{Par}_{\leq n}} (V^\imath(\xi) \otimes \mathbb{Q}(q) \mathrm{OT}_{n,N}(\nu,\xi))
\]
and
\begin{align*}
  \mathrm{RSK}^{A\mathrm{II}} : &V^\imath(\nu) \otimes V(l_1) \otimes \cdots \otimes V(l_k) \\
  &\to \bigoplus_{\xi \in \mathrm{Par}_{\leq n}} V^\imath(\xi) \otimes \mathbb{Q}(q) \{ U \in \mathrm{CSOT}_{n,k}(\nu,\xi) \mid c(U) = (l_1,\dots,l_k) \}
\end{align*}
which recovers the Berele RS correspondence and the Kobayashi--Matsumura RSK correspondence at $q = \infty$, respectively.
Here, $\mathrm{OT}_{n,k}(\nu,\xi)$ denotes the set of \emph{oscillating tableaux} (a.k.a.\,\emph{up-down tableaux}) (Definition \ref{def: ot}) and $\mathrm{CSOT}_{n,k}(\nu,\xi)$ the set of \emph{column-strict oscillating tableaux} (Definition \ref{def: csot}).

Moreover, we establish the dual RSK correspondence of type $A\mathrm{II}$ and its quantum lift (Theorem \ref{thm: dual rsk aii}):
\begin{align*}
  \mathrm{dRSK}^{A\mathrm{II}}: &V^\imath(\nu) \otimes V(1^{k_1}) \otimes \cdots \otimes V(1^{k_l}) \\
  &\to \bigoplus_{\xi \in \mathrm{Par}_{\leq n}} V^\imath(\xi) \otimes \mathbb{Q}(q) \{ U \in \mathrm{RSOT}_{n,l}(\nu,\xi) \mid c(U) = (k_1,\dots,k_l) \}.
\end{align*}
Here, $\mathrm{RSOT}_{n,k}(\nu,\xi)$ denotes the set of \emph{row-strict oscillating tableaux} (Definition \ref{def: rsot}).

\subsection{Organization}
This paper is organized as follows.
In Section \ref{sect: par}, we prepare necessary notions regarding partitions such as horizontal strips, vertical strips, and punctured partitions.
Section \ref{sect: tab} is devoted to reviewing the row-insertion and the sliding on semistandard tableaux.
In Section \ref{sect: ber row-ins}, we study the Berele row-insertion, the central research object in the present paper.
We briefly review representation theory of the quantum symmetric pairs of type $A\mathrm{II}$ in Section \ref{sect: row-ins aii}.
We also prove the coincidence of the Berele row-insertion and the row-insertion of type $A\mathrm{II}$ there.
In Section \ref{sect: app}, we use this result to deduce various combinatorial and representation theoretical results mentioned above.

\subsection{Acknowledgments}
The author would like to thank the Research Institute for Mathematical Sciences, an International Joint Usage/Research Center located in Kyoto University, where he got the initial idea of this work during his stay in June 2025.
This work was supported by JSPS KAKENHI Grant Number JP24K16903.

\subsection{Notation}
Throughout this paper, we fix a positive integer $n \in \mathbb{Z}_{> 0}$.

For each nonnegative integers $a,b \in \mathbb{Z}_{\geq 0}$, we set
\begin{align*}
  &[a,b] := \{ c \in \mathbb{Z} \mid a \leq c \leq b \}, \\
  &[a] := [1,a].
\end{align*}

Let $\leq_\mathrm{lex}$ denote the lexicographic order on $\mathbb{Z}^2$:
we have $(i,j) \leq_\mathrm{lex} (k,l)$ if and only if either $i < k$, or $i = k$ and $j \leq l$.

\section{Partitions}\label{sect: par}
Partitions and skew partitions naturally appear in representation theory of $\mathfrak{gl}_n$.
Besides them, partitions with ``holes'' often appear when we manipulate tableaux.
For rigorous treatment of such tableaux, we introduce the notion of punctured partitions.

\subsection{Partitions}
A \emph{partition} is a weakly decreasing finite sequence $\lambda = (\lambda_1,\dots,\lambda_l)$ of positive integers.
The integers $\lambda_1,\dots,\lambda_l$ are called the \emph{parts} of $\lambda$.
The length $l$ is referred to as the \emph{length} of $\lambda$, and denoted by $\ell(\lambda)$.
The sum $|\lambda| := \sum_{i=1}^l \lambda_i$ of parts is called the \emph{size} of $\lambda$.

Let $\mathrm{Par}$ denote the set of partitions.
Also, for each $l \in \mathbb{Z}_{\geq 0}$, let $\mathrm{Par}_{\leq l}$ denote the partitions of length at most $l$.

Given $\lambda \in \mathrm{Par}$, it is often convenient to extend the notion of parts by setting $\lambda_0 := \infty$ and $\lambda_i := 0 \text{ for all } i > \ell(\lambda)$:
\[
  \lambda = (\infty,\lambda_1,\dots,\lambda_{\ell(\lambda)},0,0,\dots)
\]

The \emph{Young diagram} of $\lambda \in \mathrm{Par}$ is the set
\[
  D(\lambda) := \{ (i,j) \in \mathbb{Z}_{> 0}^2 \mid j \in [\lambda_i] \}.
\]
We represent it by a collection of boxes arranged in left-justified rows with the $i$-th row from the top consisting of $\lambda_i$ boxes.
For example, if $\lambda = (5,4,3,3,1)$, then
\[
  D(\lambda) = \ydiagram{5,4,3,3,1}.
\]

\begin{defi}\label{def: add rm par}
  Let $\lambda \in \mathrm{Par}$ and $r \in \mathbb{Z}_{> 0}$.
  \begin{enumerate}
    \item The integer $r$ is said to be an \emph{addable row} of $\lambda$ if
    \[
      \lambda_r < \lambda_{r-1}.
    \]
    In this case, we set
    \[
      \operatorname{add}(\lambda,r) := (\dots,\lambda_{r-1},\lambda_r+1,\lambda_{r+1},\dots) \in \mathrm{Par}
    \]
    \item The integer $r$ is said to be a \emph{removable row} of $\lambda$ if
    \[
      \lambda_r > \lambda_{r+1}.
    \]
    In this case, we set
    \[
      \operatorname{rm}(\lambda,r) := (\dots,\lambda_{r-1},\lambda_r-1,\lambda_{r+1},\dots) \in \mathrm{Par}.
    \]
  \end{enumerate}
\end{defi}

\begin{ex}
  Let $\lambda = (5,4,3,3,1)$.
  The addable rows of $\lambda$ are $1,2,3,5,6$, and the removable rows are $1,2,4,5$.
  We have
  \[
    D(\operatorname{add}(\lambda,6)) = \ydiagram{5,4,3,3,1,1}, \quad D(\operatorname{rm}(\lambda,2)) = \ydiagram{5,3,3,3,1}.
  \]
\end{ex}

\begin{lem}\label{lem: rm add}
  Let $\lambda \in \mathrm{Par}$ and $r \in \mathbb{Z}_{> 0}$.
  \begin{enumerate}
    \item If $r$ is an addable row of $\lambda$, then it is a removable row of $\operatorname{add}(\lambda,r)$ and we have
    \[
      \operatorname{rm}(\operatorname{add}(\lambda,r),r) = \lambda.
    \]
    \item If $r$ is a removable row of $\lambda$, then it is an addable row of $\operatorname{rm}(\lambda,r)$ and we have
    \[
      \operatorname{add}(\operatorname{rm}(\lambda,r),r) = \lambda.
    \]
  \end{enumerate}
\end{lem}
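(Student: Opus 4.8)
The plan is to prove both statements by unwinding the definitions, using the fact that each of $\operatorname{add}(\lambda,r)$ and $\operatorname{rm}(\lambda,r)$ alters only the $r$-th part of the sequence while leaving all other parts unchanged (including, via the conventions of Section \ref{sect: par}, the ``parts'' $\lambda_0 = \infty$ and $\lambda_i = 0$ for $i > \ell(\lambda)$). That $\operatorname{add}(\lambda,r)$ and $\operatorname{rm}(\lambda,r)$ are themselves partitions is already part of Definition \ref{def: add rm par}, so only the two membership claims (removable/addable) and the two identities need to be checked.

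For part (1), assume $r$ is an addable row of $\lambda$, so $\lambda_r < \lambda_{r-1}$, and put $\mu := \operatorname{add}(\lambda,r)$, which has $\mu_r = \lambda_r + 1$ and $\mu_i = \lambda_i$ for $i \neq r$. To see that $r$ is a removable row of $\mu$ I would verify $\mu_r > \mu_{r+1}$, i.e. $\lambda_r + 1 > \lambda_{r+1}$; this holds since $\lambda$ is weakly decreasing, so $\lambda_r \geq \lambda_{r+1}$ (the convention $\lambda_i = 0$ for $i > \ell(\lambda)$ covers $r \geq \ell(\lambda)$). Then $\operatorname{rm}(\mu,r)$ has $r$-th part $\mu_r - 1 = \lambda_r$ and agrees with $\mu$, hence with $\lambda$, in every other position, so $\operatorname{rm}(\operatorname{add}(\lambda,r),r) = \lambda$.

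Part (2) is the mirror image. Assume $r$ is a removable row of $\lambda$, so $\lambda_r > \lambda_{r+1}$, and put $\mu := \operatorname{rm}(\lambda,r)$, with $\mu_r = \lambda_r - 1$ and $\mu_i = \lambda_i$ otherwise. That $r$ is an addable row of $\mu$ amounts to $\mu_r < \mu_{r-1}$, i.e. $\lambda_r - 1 < \lambda_{r-1}$, which follows from $\lambda_r \leq \lambda_{r-1}$ (the convention $\lambda_0 = \infty$ takes care of $r = 1$). Applying $\operatorname{add}$ at row $r$ then restores the $r$-th part to $(\lambda_r - 1) + 1 = \lambda_r$ and changes nothing else, giving $\operatorname{add}(\operatorname{rm}(\lambda,r),r) = \lambda$.

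I expect no real obstacle here: the statement is pure bookkeeping. The only points deserving a moment's attention are the boundary situations $r = 1$ and $r \geq \ell(\lambda)$ (e.g. when $\lambda_r = 1$, removing a box shortens $\lambda$), but the extended conventions $\lambda_0 = \infty$ and $\lambda_i = 0$ for $i > \ell(\lambda)$ make all the required inequalities hold uniformly, so the argument needs no case division.
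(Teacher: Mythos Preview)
Your proof is correct and is precisely the unwinding-of-definitions argument the paper has in mind; the paper itself simply writes ``The assertions are immediate from definitions.'' Your careful handling of the boundary cases via the conventions $\lambda_0=\infty$ and $\lambda_i=0$ for $i>\ell(\lambda)$ is exactly right.
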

\begin{proof}
  The assertions are immediate from definitions.
\end{proof}

\subsection{Skew partitions}
Define a binary relation $\subseteq$ on the set $\mathrm{Par}$ of partitions by declaring $\mu \subseteq \lambda$ to mean that $D(\mu) \subseteq D(\lambda)$, or equivalently,
\[
  \mu_i \leq \lambda_i \quad \text{ for all } i \geq 1.
\]

A \emph{skew partition} is a pair $\lambda/\mu := (\lambda,\mu)$ of partitions such that $\mu \subseteq \lambda$.
The \emph{size} of $\lambda/\mu$ is the difference $|\lambda/\mu| := |\lambda|-|\mu|$.
The \emph{Young diagram} of $\lambda/\mu$ is the set difference $D(\lambda/\mu) := D(\lambda) \setminus D(\mu)$.

\begin{ex}
  Let $\lambda := (5,4,3,3,1)$ and $\mu := (4,3,1)$.
  Then, we have $\mu \subset \lambda$, $|\lambda/\mu| = 8$, and
  \[
    D(\lambda/\mu) = \ydiagram{4+1,3+1,1+2,3,1}.
  \]
\end{ex}

We often identify a partition $\lambda$ with the skew partition $\lambda/()$, where $()$ is the unique partition of length $0$.

\begin{defi}
  Let $\lambda/\mu$ be a skew partition.
  \begin{enumerate}
    \item We say that $\lambda/\mu$ is a \emph{horizontal strip} if its Young diagram has at most one box in each column:
    \[
      \mu_i \in [\lambda_{i+1},\lambda_i] \quad \text{ for all } i \geq 1.
    \]
    We write $\mu \overset{\text{hor}}{\subseteq} \lambda$ to mean that $\lambda/\mu$ is a horizontal strip.
    \item We say that $\lambda/\mu$ is a \emph{vertical strip} if its Young diagram has at most one box in each row:
    \[
      \mu_i \in \{ \lambda_i-1, \lambda_i \} \quad \text{ for all } i \geq 1.
    \]
    We write $\mu \overset{\text{ver}}{\subseteq} \lambda$ to mean that $\lambda/\mu$ is a vertical strip.
  \end{enumerate}  
\end{defi}

\begin{ex}
  Let $\lambda := (5,4,3,3,1)$, $\mu := (4,4,3,1,1)$, $\nu := (5,3,2,2)$.
  Then, $\lambda/\mu$ is a horizontal strip, and $\lambda/\nu$ is a vertical strip:
  \[
    D(\lambda/\mu) = \ydiagram{4+1,4+0,3+0,1+2}, \quad D(\lambda/\nu) = \ydiagram{5+0,3+1,2+1,2+1,1}.
  \]
\end{ex}

\subsection{Punctured partitions}
\begin{defi}
  A \emph{punctured partition} is a pair $(\lambda,H)$ consisting of a partition $\lambda$ and a subset $H$ of the Young diagram $D(\lambda)$ of $\lambda$.
  The elements of $H$ are called the \emph{holes} of $(\lambda,H)$.
  The \emph{Young diagram} of a punctured partition $(\lambda,H)$ is the set difference
  \[
    D(\lambda,H) := D(\lambda) \setminus H.
  \]
\end{defi}

We represent the Young diagram of a punctured partition $(\lambda,H)$ by filling in each box of $D(\lambda)$ corresponding to a hole with a circle.
When we do not care whether a box in $D(\lambda)$ is a hole or not, we paint it gray.

\begin{ex}\label{ex: punc par}
  Let $\lambda = (5,4,3,3,1)$ and $H := \{ (1,2), (1,3), (2,4), (3,1), (4,1), (4,2) \}$.
  Then, $(\lambda,H)$ is a punctured partition, and we have
  \[
    D(\lambda,H) = \begin{ytableau}
    \empty & \bigcirc & \bigcirc & \empty & \empty \\
    \empty & \empty & \empty & \bigcirc \\
    \bigcirc & \empty & \empty \\
    \bigcirc & \bigcirc & \empty \\
    \empty \\
    \end{ytableau}.
  \]
\end{ex}

\begin{rem}\label{rem: skew par is punc par}
  Let $\lambda/\mu$ be a skew partition.
  Then, $(\lambda,D(\mu))$ is a punctured partition.
  We often identify them.
  In particular, $\lambda = \lambda/()$ is a punctured partition without holes.
\end{rem}

\begin{defi}
  Let $(\lambda,H)$ be a punctured partition, and $(r,c) \in H$.
  \begin{enumerate}
    \item We say that $(r,c)$ is \emph{slidable} if either $(r,c+1)$ or $(r+1,c)$ is a member of $D(\lambda,H)$.
    \item We say that $(r,c)$ is \emph{reversely slidable} if either $(r,c-1)$ or $(r-1,c)$ is a member of $D(\lambda,H)$.
  \end{enumerate}
\end{defi}

\begin{ex}
  Let $(\lambda,H)$ be as in Example \ref{ex: punc par}.
  Then, the slidable holes are $(1,2)$, $(1,3)$, $(3,1)$, $(4,1)$, $(4,2)$, while the reversely slidable holes are $(1,2)$, $(2,4)$, $(3,1)$, $(4,2)$.
\end{ex}

\begin{lem}\label{lem: rows of punc par}
  Let $(\lambda,H)$ be a punctured partition, and $(r,c) \in \mathbb{Z}_{\geq 0}^2$.
  \begin{enumerate}
    \item\label{item: sl lem: rows of punc par} Suppose that there is no slidable hole of the form $(r,c')$ with $c' > c$, and set
    \[
      q := \min\{ j \geq 0 \mid (r,c+j+1) \notin D(\lambda,H) \}.
    \]
    Then, we have
    \[
      \{ j > c \mid (r,j) \in D(\lambda,H) \} = [c+1,c+q];
    \]
    \[
      \begin{tikzpicture}
        \draw[fill=gray, scale=1.7]  (-1.2,0) rectangle (-0.8,-0.4);
        \draw[fill=gray, scale=1.7]  (-0.8,0) rectangle (-0.4,-0.4);
        \draw[fill=gray, scale=1.7]  (-0.4,0) rectangle (0,-0.4);
        \draw[fill=gray, scale=1.7] (0,0) rectangle (0.4,-0.4);
        \draw[scale=1.7] (0.4,0) rectangle (0.8,-0.4);
        \draw[scale=1.7] (0.8,0) rectangle (1.2,-0.4);
        \draw[scale=1.7] (1.2,0) rectangle (1.6,-0.4);
        \draw[scale=1.7] (1.6,0) rectangle (2,-0.4);
        \draw[scale=1.7] (2,0) rectangle (2.4,-0.4);
        \draw[scale=1.7] (2.4,0) rectangle (2.8,-0.4);
        \draw[scale=1.7] (1.8,-0.2) circle (0.15);
        \draw[scale=1.7] (2.2,-0.2) circle (0.15);
        \draw[scale=1.7] (2.6,-0.2) circle (0.15);
        \draw [decorate, decoration={brace, amplitude=5pt}](0.7,0.1)  --  node[above,yshift=0.5em]{$q$} (2.7,0.1);
        \draw [decorate, decoration={brace, amplitude=5pt, mirror}](-2.05,-0.8) --  node[below,yshift=-0.5em]{$\lambda_r$} (4.75,-0.8);
        \node at (0.3,0.2) {$c$};
        \node at (-2.4,-0.35) {$r$};
      \end{tikzpicture}
    \]
    \item\label{item: rev sl lem: rows of punc par} Suppose that there is no reversely slidable hole of the form $(r,c')$ with $c' < c$, and set
    \[
      p := \min\{ j \geq 0 \mid (r,c-j-1) \notin D(\lambda,H) \}.
    \]
    Then, we have
    \[
      \{ j < c \mid (r,j) \in D(\lambda,H) \} = [c-p,c-1];
    \]
    \[
      \begin{tikzpicture}
        \draw[scale=1.7] (-1.2,0) rectangle (-0.8,-0.4);
        \draw[scale=1.7] (-0.8,0) rectangle (-0.4,-0.4);
        \draw[scale=1.7] (-0.4,0) rectangle (0,-0.4);
        \draw[scale=1.7] (0,0) rectangle (0.4,-0.4);
        \draw[scale=1.7] (0.4,0) rectangle (0.8,-0.4);
        \draw[scale=1.7] (0.8,0) rectangle (1.2,-0.4);
        \draw[fill=gray, scale=1.7] (1.2,0) rectangle (1.6,-0.4);
        \draw[fill=gray, scale=1.7] (1.6,0) rectangle (2,-0.4);
        \draw[fill=gray, scale=1.7] (2,0) rectangle (2.4,-0.4);
        \draw[fill=gray, scale=1.7] (2.4,0) rectangle (2.8,-0.4);
        \draw[scale=1.7] (-1,-0.2) circle (0.15);
        \draw[scale=1.7] (-0.6,-0.2) circle (0.15);
        \draw[scale=1.7] (-0.2,-0.2) circle (0.15);
        \draw [decorate, decoration={brace, amplitude=5pt}](0.05,0.1)  --  node[above,yshift=0.5em]{$p$} (2,0.1);
        \draw [decorate, decoration={brace, amplitude=5pt, mirror}](-2.05,-0.8) --  node[below,yshift=-0.5em]{$\lambda_r$} (4.75,-0.8);
        \node at (2.35,0.2) {$c$};
        \node at (-2.4,-0.35) {$r$};
      \end{tikzpicture}
    \]
  \end{enumerate}
\end{lem}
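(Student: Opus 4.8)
The plan is to prove (1) directly and obtain (2) by the left--right mirror argument; in both cases the content is that, once one forbids slidable (resp.\ reversely slidable) holes on one side of column $c$ in row $r$, the non-hole cells of that row on that side must form a single contiguous run abutting column $c$. For part (1), the inclusion $[c+1,c+q]\subseteq\{j>c\mid(r,j)\in D(\lambda,H)\}$ is immediate from the minimality of $q$: for every $j\in\{0,\dots,q-1\}$ one has $(r,c+j+1)\in D(\lambda,H)$, and as $j$ ranges over this set the cells $(r,c+j+1)$ are exactly $(r,c+1),\dots,(r,c+q)$, all lying in columns $>c$.

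The reverse inclusion rests on the following \emph{fill-in claim}: if $(r,j)\in D(\lambda,H)$ with $j\geq c+2$, then $(r,j-1)\in D(\lambda,H)$. Indeed, $(r,j)\in D(\lambda,H)\subseteq D(\lambda)$ forces $j\leq\lambda_r$, hence $j-1<\lambda_r$ and $(r,j-1)\in D(\lambda)$; so if $(r,j-1)\notin D(\lambda,H)$ it must be a hole, and it is slidable because $(r,(j-1)+1)=(r,j)\in D(\lambda,H)$. Since $j-1>c$, this contradicts the hypothesis that there is no slidable hole $(r,c')$ with $c'>c$. Iterating the claim downward, any $j>c$ with $(r,j)\in D(\lambda,H)$ forces every $(r,i)$ with $c<i\leq j$ to lie in $D(\lambda,H)$; since $(r,c+q+1)\notin D(\lambda,H)$ by definition of $q$, no such $j$ can exceed $c+q$. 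Combining the two inclusions gives the equality in (1).

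Part (2) is proved by the same argument read from right to left: $[c-p,c-1]\subseteq\{j<c\mid(r,j)\in D(\lambda,H)\}$ follows directly from the minimality of $p$, and for the reverse inclusion one uses the mirror fill-in claim that $(r,j)\in D(\lambda,H)$ with $j\leq c-2$ implies $(r,j+1)\in D(\lambda,H)$ --- otherwise $(r,j+1)$ would be a reversely slidable hole (witnessed by $(r,j)$) in a column $<c$, contradicting the hypothesis. Since the whole argument is local bookkeeping about a single row, there is no serious obstacle; the one point demanding attention is the boundary check that the intermediate cell lies in $D(\lambda)$, so that ``not in $D(\lambda,H)$'' genuinely means ``is a hole.'' In (1) this is automatic because one moves leftward from a cell of $D(\lambda)$; in (2) one moves rightward, so one needs $j+1\leq\lambda_r$ for the relevant $j$, which holds as soon as $c\leq\lambda_r+1$, i.e.\ whenever $(r,c)$ is a hole or the cell immediately following row $r$ --- the situation in which the lemma is used. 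With that understood, the mirror argument goes through verbatim.
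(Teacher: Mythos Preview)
Your argument is correct and follows essentially the same approach as the paper: both establish (1) by observing that a cell not in $D(\lambda,H)$ lying immediately to the left of one in $D(\lambda,H)$ would be a slidable hole in row $r$ to the right of column $c$ (the paper via a minimal-counterexample argument, you via an iterated fill-in claim), and then treat (2) by the left--right mirror. Your explicit boundary check for (2)---that one needs $c\le\lambda_r+1$ so that the intermediate cell $(r,j+1)$ actually lies in $D(\lambda)$---is a genuine subtlety the paper's ``proved similarly'' passes over; you are right that this condition holds in every application of the lemma.
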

\begin{proof}
  We prove only the first assertion; the second one can be proved similarly.
  By the definition of $q$, we have
  \[
    (r,j) \in D(\lambda,H) \quad \text{ for all } j \in [c+1,c+q].
  \]
  Hence, we only need to show that $(r,j) \notin D(\lambda,H)$ for all $j > c+q$.
  Assume contrary that $(r,j) \in D(\lambda,H)$ for some $j > c+q$.
  We may further assume that this $j$ is the minimum among those.
  Since $(r,c+q+1) \notin D(\lambda,H)$ by the definition of $q$, we must have $j > c+q+1$ and $(r,j-1) \notin D(\lambda,H)$.
  This implies that $(r,j-1)$ is a slidable hole.
  However, this contradicts our assumption on $(r,c)$.
  Thus, we complete the proof.
\end{proof}

\begin{prop}\label{prop: punc par wo (rev)slidable holes}
  Let $(\lambda,H)$ be a punctured partition.
  \begin{enumerate}
    \item\label{item: 1 lem punc par wo (rev)slidable holes} If $(\lambda,H)$ has no slidable holes, then there exists a partition $\mu \subseteq \lambda$ such that $D(\lambda,H) = D(\mu)$.
    \item\label{item: 2 lem punc par wo (rev)slidable holes} If $(\lambda,H)$ has no reversely slidable holes, then there exists a partition $\nu \subseteq \lambda$ such that $D(\lambda,H) = D(\lambda/\nu)$.
  \end{enumerate}
\end{prop}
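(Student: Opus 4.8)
The plan is to prove both statements by analyzing the shape of $D(\lambda,H)$ row by row, using Lemma \ref{lem: rows of punc par} as the key tool. I focus on the first assertion; the second is entirely parallel (reflecting the roles of "slidable" and "reversely slidable", and of $\mu$ and $\nu$).

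First I would fix a row index $r$ and show that the set $\{ j \geq 1 \mid (r,j) \in D(\lambda,H) \}$ is of the form $[1,\mu_r]$ for some $\mu_r \in [0,\lambda_r]$; that is, each row of $D(\lambda,H)$ is an initial segment, so that defining $\mu_r$ as the number of non-hole boxes in row $r$ actually records a left-justified row. To see this, note that by hypothesis $(\lambda,H)$ has no slidable holes at all, so in particular there is no slidable hole of the form $(r,c')$ with $c' > 0$. Applying Lemma \ref{lem: rows of punc par}\eqref{item: sl lem: rows of punc par} with $c = 0$ (here $(r,0)$ plays a dummy role and one sets $q := \min\{ j \geq 0 \mid (r,j+1) \notin D(\lambda,H)\}$) gives exactly $\{ j \geq 1 \mid (r,j) \in D(\lambda,H) \} = [1,q]$, so $\mu_r := q$ works.

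Next I would check that the resulting sequence $(\mu_1,\mu_2,\dots)$ is weakly decreasing, i.e.\ that $\mu$ is a genuine partition. Suppose for contradiction that $\mu_{r+1} > \mu_r$ for some $r$. Then $(r+1,\mu_r+1) \in D(\lambda,H)$ while, by the previous paragraph, $(r,\mu_r+1) \notin D(\lambda,H)$; since $\mu_r + 1 \leq \mu_{r+1} \leq \lambda_{r+1} \leq \lambda_r$, the box $(r,\mu_r+1)$ lies in $D(\lambda)$ and hence is a hole of $(\lambda,H)$. But then this hole has $(r+1,\mu_r+1) \in D(\lambda,H)$ directly below it, so it is slidable — contradicting the hypothesis. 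Therefore $\mu = (\mu_1,\mu_2,\dots) \in \mathrm{Par}$, and $\mu_r \leq \lambda_r$ for every $r$ gives $\mu \subseteq \lambda$. Finally, $D(\lambda,H) = \bigcup_r \{ (r,j) \mid j \in [1,\mu_r] \} = D(\mu)$ by construction, completing the proof of \eqref{item: 1 lem punc par wo (rev)slidable holes}.

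For \eqref{item: 2 lem punc par wo (rev)slidable holes} I would argue symmetrically: using Lemma \ref{lem: rows of punc par}\eqref{item: rev sl lem: rows of punc par} with $c = \lambda_r + 1$ shows that in each row $r$ the non-hole boxes form a final segment $[\nu_r + 1, \lambda_r]$ for some $\nu_r \in [0,\lambda_r]$, so $D(\lambda,H)$ is the skew diagram $D(\lambda/\nu)$ provided $\nu := (\nu_1,\nu_2,\dots)$ is a partition; the latter follows by the same contradiction argument, now using that a would-be "left" hole adjacent to a non-hole box on its right is reversely slidable. The only mild subtlety — and the step I would be most careful about — is the correct bookkeeping at the boundary of each row when invoking Lemma \ref{lem: rows of punc par} with the dummy column indices $c = 0$ and $c = \lambda_r + 1$, and the verification that no slidable (resp.\ reversely slidable) hole of the relevant form exists given only the global hypothesis; but this is immediate since "no slidable holes at all" trivially implies "no slidable holes in row $r$ to the right of any given column".
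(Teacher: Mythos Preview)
Your proof is correct and follows essentially the same approach as the paper's: both define $\mu_i$ via Lemma~\ref{lem: rows of punc par}\eqref{item: sl lem: rows of punc par} applied with $c=0$, then verify $\mu_{i}\geq \mu_{i+1}$ by the same contradiction argument (you pick the box $(r,\mu_r+1)$ where the paper uses $(i,\mu_{i+1})$, but either choice works). One small slip in your sketch of \eqref{item: 2 lem punc par wo (rev)slidable holes}: a hole with a non-hole box \emph{to its right} is slidable, not reversely slidable; the relevant contradiction in the mirror argument comes from the hole $(r+1,\nu_{r+1})$ having the non-hole $(r,\nu_{r+1})$ \emph{above} it (or equivalently, a non-hole to its \emph{left}), which makes it reversely slidable.
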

\begin{proof}
  Let us prove the first assertion.
  For each $i \geq 1$, set
  \[
    \mu_i := \min\{ j \geq 0 \mid (i,j+1) \notin D(\lambda,H) \}.
  \]
  By Lemma \ref{lem: rows of punc par} \eqref{item: sl lem: rows of punc par}, we have
  \[
    D(\lambda,H) = \{ (i,j) \in \mathbb{Z}_{> 0}^2 \mid j \in [\mu_i] \}.
  \]

  It remains to show that $\mu := (\mu_1,\mu_2,\dots)$ is a partition contained in $\lambda$.
  By the definition of $\mu_i$'s, we see that $\mu_i \leq \lambda_i$ for all $i \geq 1$.
  In particular, $\mu_i = 0$ for all $i > \ell(\lambda)$.
  Hence, we only need to prove that $\mu_i \geq \mu_{i+1}$ for all $i \geq 1$.
  Assume contrary that $\mu_i < \mu_{i+1}$ for some $i \geq 1$.
  Then, we have $(i,\mu_{i+1}) \notin D(\lambda,H)$ and $(i+1,\mu_{i+1}) \in D(\lambda,H)$.
  Since $\mu_{i+1} \leq \lambda_{i+1} \leq \lambda_i$, these imply that $(i,\mu_{i+1})$ is a slidable hole.
  However, this contradicts our assumption that $(\lambda,H)$ has no slidable holes.
  Thus, we complete the proof of the first assertion.

  The second assertion can be prove in a similar way to the first one; we use Lemma \ref{lem: rows of punc par} \eqref{item: rev sl lem: rows of punc par} instead of \eqref{item: sl lem: rows of punc par} in this case.
\end{proof}

\begin{rem}\label{rem: punc par wo (rev)slidable holes}
  Based on Proposition \ref{prop: punc par wo (rev)slidable holes}, we often regard punctured partitions without slidable holes as partitions, and those without reversely slidable holes as skew partitions.
\end{rem}

\section{Semistandard tableaux}\label{sect: tab}
In this section, we study various algorithms involving semistandard tableaux such as the Schensted row-insertion, the Sch\"{u}tzenberger sliding, and the rectification.
They are used to define the Berele row-insertion and the row-insertion of type $A\mathrm{II}$ in the subsequent sections.

Throughout this section, we fix a punctured partition $(\lambda,H)$.

\subsection{Semistandard punctured tableaux}
\begin{defi}\label{def: punc tab}
  A \emph{punctured tableau} of shape $(\lambda,H)$ with entries in $[n]$ is a map
  \[
    T : D(\lambda,H) \to [n].
  \]
  Let $\operatorname{sh}(T) := (\lambda,H)$ denote the shape of $T$.
  The values $T(i,j)$ with $(i,j) \in D(\lambda,H)$ are called the \emph{entries} of $T$.
\end{defi}

Given a punctured tableau $T$ of shape $(\lambda,H)$, we set
\[
  T(i,j) := \infty \quad \text{ for each } (i,j) \in \mathbb{Z}_{> 0}^2 \setminus D(\lambda,H).
\]

A tableau $T$ of shape $(\lambda,H)$ is represented by filling in the boxes of the Young diagram $D(\lambda,H)$ with their entries.

\begin{ex}\label{ex: punc tab}
  Let $(\lambda,H)$ as in Example \ref{ex: punc par}.
  The following is a tableau of shape $(\lambda,H)$.
  \[
    \begin{ytableau}
      2 & \bigcirc & \bigcirc & 2 & 4 \\
      3 & 4 & 4 & \bigcirc \\
      \bigcirc & 6 & 8 \\
      \bigcirc & \bigcirc & 9 \\
      5
    \end{ytableau}
  \]
\end{ex}

\begin{rem}
  The notion of punctured tableaux has already appeared in \cite{Ber86}, but in a more restricted meaning.
  Namely, the punctured tableaux there are always supposed to have exactly one hole; $|H| = 1$.
\end{rem}

  When we regard a punctured partition $(\lambda,H)$ without slidable holes as a partition $\mu$ as in Remark \ref{rem: punc par wo (rev)slidable holes}, we say that a punctured tableau $T$ of shape $(\lambda,H)$ is a \emph{tableau} of shape $\mu$ and write $\operatorname{sh}(T) = \mu$.
  Similarly, when we regard a punctured partition $(\lambda,H)$ without reversely slidable holes as a skew partition $\lambda/\nu$, we say that a punctured tableau $T$ of shape $(\lambda,H)$ is a \emph{skew tableau} of shape $\lambda/\nu$ and write $\operatorname{sh}(T) = \lambda/\nu$.

\begin{defi}\label{def: sstd punc tab}
  A punctured tableau $T$ of shape $(\lambda,H)$ with entries in $[n]$ is said to be \emph{semistandard} if its entries weakly increase along the rows from the left to right and strictly increase along the columns from the top to bottom: for each $(i,j),(k,l) \in D(\lambda,H)$, it holds that
  \begin{enumerate}
    \item $T(i,j) \leq T(k,l)$ if $i = k$ and $j < l$,
    \item $T(i,j) < T(k,l)$ if $i < k$ and $j = l$.
  \end{enumerate}
\end{defi}

\begin{ex}
  The punctured tableau in Example \ref{ex: punc tab} is semistandard.
\end{ex}

Let $\mathrm{SST}_n(\lambda,H)$ denote the set of semistandard punctured tableaux of shape $(\lambda,H)$ with entries in $[n]$.
Similarly, let $\mathrm{SST}_n(\mu)$ denote the set of semistandard tableaux of shape $\mu$ with entries in $[n]$, and $\mathrm{SST}_n(\lambda/\nu)$ the set of semistandard skew tableaux of shape $\lambda/\nu$ with entries in $[n]$.

\subsection{Words}
Let $\mathcal{W}_n$ denote the free monoid of words with letters in $[n]$:
\[
  \mathcal{W}_n := \bigsqcup_{l \geq 0} \,[n]^l.
\]

\begin{defi}\label{def: knu equiv}
  Two words $w,w' \in \mathcal{W}_n$ are said to be \emph{Knuth equivalent} if they can be transformed into each other by a sequence of the elementary Knuth transformations
  \begin{description}
    \item[(K1)] $yzx \mapsto yxz$ if $x < y \leq z$,
    \item[(K2)] $xzy \mapsto zxy$ if $x \leq y < z$,
  \end{description}
  and their inverses.
  In this case, we write $w \equiv w'$.
\end{defi}

\begin{defi}
Let $T$ be a punctured tableau of shape $(\lambda,H)$.
  \begin{enumerate}
    \item The \emph{row-word} of $T$ is the word $w_\mathrm{row}(T)$ obtained by reading the entries of $T$ along the rows from the left to right and from the bottom to top.
    \item The \emph{column-word} of $T$ is the word $w_\mathrm{col}(T)$ obtained by reading the entries of $T$ along the columns from the bottom to top and from the left to right.
  \end{enumerate}
\end{defi}

\begin{ex}
  Let $T$ denote the punctured tableau in Example \ref{ex: punc tab}.
  Then, we have
  \[
    w_\mathrm{row}(T) = (5,9,6,8,3,4,4,2,2,4), \quad w_\mathrm{col}(T) = (5,3,2,6,4,9,8,4,2,4).
  \]
\end{ex}

Regarding the Knuth equivalence, let us recall the following results for later use.

\begin{prop}[\textit{cf}.\,{\cite[\S 2.1 Theorem]{Ful97}}]\label{prop: row word equiv}
  Let $T,S$ be semistandard tableaux.
  If $w_\mathrm{row}(T) \equiv w_\mathrm{row}(S)$, then we have $T = S$.
\end{prop}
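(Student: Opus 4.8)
The plan is to route the statement through the \emph{insertion tableau} $P(w)$ of a word $w$, obtained by Schensted row-inserting the letters of $w$ into the empty tableau one at a time from left to right. The proposition then follows from two facts: (A) $P(w_\mathrm{row}(T)) = T$ for every semistandard tableau $T$; and (B) if $w \equiv w'$, then $P(w) = P(w')$. Granting these, $w_\mathrm{row}(T) \equiv w_\mathrm{row}(S)$ gives $T = P(w_\mathrm{row}(T)) = P(w_\mathrm{row}(S)) = S$.

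For (A), I would induct on the number of rows of $T$. Write $R$ for the first row of $T$ and $T'$ for the tableau formed by the remaining rows (shifted up by one). Since $w_\mathrm{row}$ reads rows from bottom to top, $w_\mathrm{row}(T) = w_\mathrm{row}(T') \cdot R$, so $P(w_\mathrm{row}(T)) = P(w_\mathrm{row}(T')) \leftarrow R = T' \leftarrow R$ by the inductive hypothesis, where $\leftarrow R$ means row-inserting the letters of $R$ from left to right. It then remains to check $T' \leftarrow R = T$: writing $R = (a_1 \le \cdots \le a_s)$ and using column-strictness of $T$ (so $a_j < T(2,j)$ whenever the box $(2,j)$ exists), one verifies directly that inserting $a_1, a_2, \dots$ restores the first row of $T$ as $R$ while feeding exactly the original second row of $T$ into the row below, so the process propagates downward and reconstructs $T$; the base case of a single weakly increasing row is clear.

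For (B), it suffices to show that $P$ is unchanged by a single elementary Knuth transformation (K1) or (K2) applied to a three-letter factor of a word. Writing the word as $p \cdot u v w \cdot s$ with $uvw$ the relevant factor, insert the prefix $p$ to obtain a semistandard tableau $Q$; then it is enough to prove that, for any such $Q$, one has $((Q \leftarrow y)\leftarrow z)\leftarrow x = ((Q\leftarrow y)\leftarrow x)\leftarrow z$ when $x < y \le z$, together with the analogous identity corresponding to (K2), because inserting the common suffix $s$ into equal tableaux yields equal tableaux. These identities are verified by tracking the two bumping routes and comparing, column by column, which entries get displaced in each case. This local case analysis on bumping paths is the main (and essentially only) laborious point; it is precisely the classical argument of \cite[\S 2.1]{Ful97}, which one may alternatively simply cite.

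A remark on an alternative route: Greene's theorem expresses the shape of $P(w)$ through lengths of unions of increasing and decreasing subsequences, which are manifestly Knuth invariant, but this only recovers the \emph{shape} of $T$ from the Knuth class, not all of $T$; hence I would still need the bumping-path analysis in (B) (or a comparable inductive refinement) to get the full statement.
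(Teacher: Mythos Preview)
Your proposal is correct and is precisely the classical argument from \cite[\S 2.1]{Ful97}; the paper does not supply its own proof but simply records the statement with that citation, so there is nothing further to compare. Your sketch of (A) is accurate (the key point being that inserting $a_j$ into the current first row bumps exactly $T'(1,j)$ for $j\le \lambda_2$, so the first row of $T'$ is fed downward in order), and your reduction of (B) to a single Knuth move followed by the bumping-path case analysis is exactly Fulton's route.
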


\begin{prop}[{\cite[\S 2.3 equation (10)]{Ful97}}]\label{prop: row col}
  Let $T$ be a semistandard skew tableau.
  Then, we have
  \[
    w_\mathrm{row}(T) \equiv w_\mathrm{col}(T).
  \]
\end{prop}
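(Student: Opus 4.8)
Proof proposal for Proposition~\ref{prop: row col} ($w_\mathrm{row}(T) \equiv w_\mathrm{col}(T)$ for a semistandard skew tableau $T$).

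The plan is to induct on the number of columns of $T$, reducing the general skew-tableau case to a statement about how the row-word and column-word of a single column interact with the rest of the tableau. Write $T$ as the concatenation (side by side) of its first column $C$ and the remaining skew tableau $T'$. On the level of reading words, $w_\mathrm{row}(T)$ is obtained by interleaving the single letters of $C$ (read bottom to top) into $w_\mathrm{row}(T')$ — specifically, reading row by row from the bottom, each row of $T$ contributes its $C$-entry (if that row meets the first column) followed by its $T'$-entries. By contrast, $w_\mathrm{col}(T) = w_\mathrm{col}(C)\, w_\mathrm{col}(T')$, where $w_\mathrm{col}(C)$ is just $C$ read bottom to top. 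So the first reduction is: it suffices to show $w_\mathrm{row}(T) \equiv w_\mathrm{col}(C)\, w_\mathrm{row}(T')$, and then apply the induction hypothesis to $T'$ to replace $w_\mathrm{row}(T')$ by $w_\mathrm{col}(T')$.

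The core of the argument is therefore the single-column step: if $C = (c_1 < c_2 < \dots < c_k)$ is a strictly increasing column (read top to bottom) sitting to the left of a semistandard skew tableau $T'$, then moving each $c_i$ leftward past the entries of $T'$ in the rows above it, using the Knuth moves (K1)/(K2), collects the $c_i$'s into the block $c_k c_{k-1} \cdots c_1$ at the front. I would carry this out letter by letter, from the bottom of the column upward: given $c_i$ in the $i$-th row, I want to commute it to the left past all entries of $T'$ that precede it in $w_\mathrm{row}(T)$, namely the $T'$-entries of rows $1, \dots, i-1$ together with the already-moved $c_{i-1}, \dots, c_1$. The semistandardness of $T$ (strict increase down the first column, weak increase along rows, strict increase down columns of $T'$) is exactly what guarantees the inequalities $x < y \le z$ or $x \le y < z$ needed to apply (K1) or (K2) at each step: when $c_i$ sits between two entries $a,b$ of row $j<i$ of $T'$ (so $a \le b$, and $a$ is in some column, $c_i$ would be strictly below the entry of that column in row $i$, forcing $c_i <$ that entry $\le b$ by column-strictness... more carefully, one checks $a \le c_i$ fails in general, so one uses the entry directly above). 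Getting these inequalities precisely right — matching the pattern of (K1) versus (K2) to the local configuration of $T$ and tracking which auxiliary letter plays the role of the "pivot" $y$ — is the step I expect to be the main obstacle; it is the usual fiddly bookkeeping behind "a column is Knuth-equivalent to the reverse block, and commutes appropriately."

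An alternative, possibly cleaner route avoids the explicit Knuth-move chase: embed $T$ into a genuine (non-skew) tableau and invoke Proposition~\ref{prop: row word equiv}. Concretely, let $P = $ the rectification of $T$ (obtained by Schützenberger slides, which preserve the Knuth class of the reading word — a fact available once the sliding is set up in this section). Then $w_\mathrm{row}(T) \equiv w_\mathrm{row}(P)$ with $P$ a semistandard tableau, and separately one shows $w_\mathrm{col}(T) \equiv w_\mathrm{col}(P)$; since for a straight-shape semistandard tableau the equality $w_\mathrm{row}(P) \equiv w_\mathrm{col}(P)$ is the classical statement (and in fact both rectify to $P$ itself by Proposition~\ref{prop: row word equiv}), the result follows. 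However, this shifts the burden onto proving that sliding preserves the column-word's Knuth class, which is essentially the same single-column lemma in disguise, so I would present the direct induction as the primary proof and mention the rectification viewpoint only as a remark. In either case the proposition is ultimately a formal consequence of the Knuth relations plus semistandardness, with no representation theory involved.
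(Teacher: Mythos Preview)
The paper does not give its own proof here; the proposition is quoted from \cite[\S 2.3, equation (10)]{Ful97}. Your induction on the number of columns, reducing to the step of commuting a single strictly increasing column past the row-word of the remaining tableau via Knuth moves, is a correct and standard route. The inequalities needed for each application of (K1)/(K2) do come from semistandardness --- the pivot letter is supplied by an entry of $T'$ in the same column as the letter being passed --- and the bookkeeping, while tedious, goes through.

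Your alternative via rectification, however, is circular in the paper's logical order: to conclude $w_\mathrm{col}(T) \equiv w_\mathrm{row}(\operatorname{Rect}(T))$ you would need either the present proposition for straight shapes or the fact that row-inserting $w_\mathrm{col}(P)$ returns $P$, which is equivalent to it. Indeed, Corollary~\ref{cor: rect and col-word} later in the paper is deduced \emph{from} Proposition~\ref{prop: row col}, not the other way around. You are right to keep the direct induction as the primary argument and relegate the rectification viewpoint to a remark.
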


\begin{lem}[{\cite[the proof of \S 2.1 Proposition 2]{Ful97}}]\label{lem: sl lem}
  Let $p,q \in \mathbb{Z}_{\geq 0}$ and $u_1,\dots,u_p$, $v_1,\dots,v_p$, $x$, $y_1,\dots,y_q$, $z_1,\dots,z_q \in \mathbb{Z}_{> 0}$.
  Assume the following\,\rm{:}
  \begin{itemize}
    \item $u_1 \leq \cdots \leq u_p$,
    \item $v_1 \leq \cdots \leq v_p$,
    \item $u_i < v_i$ for all $i \in [p]$,
    \item $v_p \leq x \leq y_1$,
    \item $y_1 \leq \cdots \leq y_q$,
    \item $z_1 \leq \cdots \leq z_q$,
    \item $y_j < z_j$ for all $j \in [q]$.
  \end{itemize}
  Then, we have
  \begin{align}
    v_1 \cdots v_p x z_1 \cdots z_q u_1 \cdots u_p y_1 \cdots y_q \equiv v_1 \cdots v_p z_1 \cdots z_q u_1 \cdots u_p x y_1 \cdots y_q. \label{eq: sl lem}
  \end{align}
\end{lem}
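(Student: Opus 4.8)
The plan is to reduce the identity \eqref{eq: sl lem} to a composition of Knuth transformations by peeling off the parameter $p$ (the length of the $u$- and $v$-sequences) from the inside, using the simpler case $p = 0$ as the base, and then to treat the $p = 0$ case by induction on $q$. The two sides of \eqref{eq: sl lem} differ only in the placement of $x$: on the left, $x$ sits between the $v$-block and the $z$-block; on the right, $x$ has migrated across the $z$-block, the $u$-block, and into the gap just before the $y$-block. So the content of the lemma is that $x$ can be ``slid'' rightward past $z_1 \cdots z_q u_1 \cdots u_p$ at the cost of Knuth moves, given the interlacing hypotheses.

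\textbf{Step 1: reduce to $p = 0$.} Write the common prefix $v_1 \cdots v_p$ and suffix $y_1 \cdots y_q$ aside (Knuth equivalence is a congruence, so we may strip equal prefixes/suffixes). What remains to prove is
\[
  x\, z_1 \cdots z_q\, u_1 \cdots u_p \equiv z_1 \cdots z_q\, u_1 \cdots u_p\, x.
\]
I would prove this by induction on $p$. For the inductive step, I want to commute $u_p$ leftward past $x$ first. Observe that $u_p \le v_p \le x$, and in fact I claim $u_p < x$ is not needed — what I need is a $(K1)$ or $(K2)$ move to pull $x$ past $u_p$; since $u_p \le x$, the pattern $z_q\, u_p\, x$ with $u_p \le x$ and (from $y_j < z_j$ applied at $j = q$, combined with $v_p \le x \le y_1 \le y_q < z_q$, so $x < z_q$) we get $u_p \le x < z_q$, which is exactly the hypothesis for $(K1)$: $z_q\, u_p\, x \mapsto z_q\, x\, u_p$. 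Iterating this move carries $x$ leftward past all of $u_p, u_{p-1}, \dots, u_1$ — at each stage the relevant letter $u_i$ satisfies $u_i \le u_p \le x < z_q \le z_j$ for the appropriate $z_j$, so the $(K1)$ precondition holds — until we reach $z_1 \cdots z_q\, x\, u_1 \cdots u_p$. Wait: I should instead move $x$ past the $u$'s from the \emph{right} end, so the cleaner route is: starting from $z_1 \cdots z_q\, u_1 \cdots u_p\, x$, apply $(K1)$ repeatedly to transport $x$ leftward through $u_p, u_{p-1}, \dots, u_1$, arriving at $z_1 \cdots z_q\, x\, u_1 \cdots u_p$. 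Now it suffices to show $x\, z_1 \cdots z_q \equiv z_1 \cdots z_q\, x$, i.e. the case $p = 0$.

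\textbf{Step 2: the case $p = 0$.} Here I must show $x\, z_1 \cdots z_q \equiv z_1 \cdots z_q\, x$ under $x \le y_1 \le \cdots \le y_q$ and $y_j < z_j$. I proceed by induction on $q$, peeling $z_q$ off the \emph{right}. I want to move $x$ rightward past $z_1$ first. From $x \le y_1 \le y_j < z_j$ for every $j$, in particular $x < z_1$ and $x < z_2$, so the pattern $x\, z_1\, z_2$ has $x < z_1$ and $x \le z_2$, but that is not immediately a $(K1)$ or $(K2)$ pattern as stated (those need a middle letter that is the largest or smallest). The correct elementary move is: in $x\, z_1\, z_2$ with $x < z_1 \le z_2$ — hmm, I need $x \le z_2 < z_1$ for $(K2)$ or $z_2 < x$ for... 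Let me instead push $x$ through one $z$ at a time using a three-letter window that \emph{includes} a $y$. This is why the $y$'s are in the statement: $x$ should travel as $x\,z_1\cdots z_q\,y_1\cdots y_q \to z_1\cdots z_q\,y_1\cdots y_q\,x$ is false; rather, in the full Lemma the $y$'s sit to the right of everything, so I have room. Reconsidering: in the $p=0$ reduction the word still carries $\cdots z_q\, y_1 \cdots y_q$ as a suffix (I only stripped $v$'s and did not strip $y$'s — I stripped $y$'s in Step 1, so let me NOT strip the $y$'s). Revised plan: strip only the prefix $v_1 \cdots v_p$, keep the suffix, reduce to $x\,z_1\cdots z_q\,u_1\cdots u_p\,y_1\cdots y_q \equiv z_1\cdots z_q\,u_1\cdots u_p\,x\,y_1\cdots y_q$; run Step 1 to eliminate the $u$'s (moving $x$ past them with $(K1)$ as above), landing at $x\,z_1\cdots z_q\,y_1\cdots y_q \equiv z_1\cdots z_q\,x\,y_1\cdots y_q$; then in Step 2, induct on $q$ by first using $(K2)$ on the window $z_q\,y_q\,?$ — no. The genuinely clean statement to induct on is $x\,z_1\cdots z_q\,y_1\cdots y_q \equiv z_1\cdots z_q\,x\,y_1\cdots y_q$: move $x$ rightward past $z_1$ using the window $x\,z_1\,z_2$ when $q \ge 2$, where I need $x \le z_2 < z_1$? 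No — $z_1 \le z_2$. So $(K2)$ $xzy\mapsto zxy$ with $x \le y < z$: take $x = x$, $z = z_1$, $y = z_2$? needs $z_2 < z_1$, false. Take the window $x\,z_1\,y_1$: $(K2)$ needs $x \le y_1 < z_1$, which holds! So $x\,z_1\,y_1 \mapsto z_1\,x\,y_1$. Hmm but $y_1$ is not adjacent to $z_1$ when $q \ge 2$.

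\textbf{Step 2, correctly: induct downward.} Handle $z_q$ \emph{last}, not first. From $x\,z_1 \cdots z_q\,y_1 \cdots y_q$, first recursively slide $x$ past $z_1 \cdots z_{q-1}$ using the $y$'s: by the inductive hypothesis applied to $q - 1$ copies (with $y_2, \dots, y_q$ as the $y$-block, noting $x \le y_1 \le y_2$ so the hypothesis $x \le y_2$ holds and $y_j < z_j$ for $j \le q-1$), obtain $z_1 \cdots z_{q-1}\,x\,z_q\,y_1 \cdots y_q$. Now the window $x\,z_q\,y_1$: since $x \le y_1$ and $y_1 \le y_q < z_q$, we have $x \le y_1 < z_q$, so $(K2)$ gives $x\,z_q\,y_1 \mapsto z_q\,x\,y_1$, yielding $z_1 \cdots z_q\,x\,y_1 \cdots y_q$, as desired. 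The base case $q = 0$ is the trivial identity $x \equiv x$.

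\textbf{The main obstacle} is getting the windows and the direction of induction right so that at every single elementary step one of the strict/weak inequalities demanded by $(K1)$ or $(K2)$ is actually available from the seven hypotheses — in particular keeping careful track of whether the migrating letter $x$ needs to pass to the left of the $u$'s (via $(K1)$, exploiting $u_i \le x < z_j$) or to the right of the $z$'s (via $(K2)$, exploiting $x \le y_1 < z_j$), and ensuring the two sub-inductions compose without the suffix $y_1 \cdots y_q$ or prefix $v_1 \cdots v_p$ interfering. Once the bookkeeping is set up, each step is a one-line verification of an inequality chain like $u_i \le u_p \le v_p \le x \le y_1 \le y_q < z_q \le z_j$, and no genuine difficulty remains.
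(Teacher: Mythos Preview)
The paper does not give its own proof of this lemma; it simply cites Fulton's textbook. Your attempt, however, has a genuine gap that cannot be patched within your framework.

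Your revised plan strips the prefix $v_1\cdots v_p$ and reduces to
\[
x\,z_1\cdots z_q\,u_1\cdots u_p\,y_1\cdots y_q \;\equiv\; z_1\cdots z_q\,u_1\cdots u_p\,x\,y_1\cdots y_q.
\]
This reduced claim is \emph{false}. Take $p=q=1$ with $u_1=1$, $v_1=x=y_1=2$, $z_1=3$; all seven hypotheses of the lemma hold. The reduced claim reads $2\,3\,1\,2 \equiv 3\,1\,2\,2$, but row-inserting $2312$ gives the tableau with rows $(1,2),(2,3)$ while $3122$ gives rows $(1,2,2),(3)$. Even more starkly, with $q=0,\,p=1$ the reduced claim collapses to $x\,u_1\equiv u_1\,x$ for $u_1<x$, which is false. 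So the $v$'s are not a detachable prefix: they are precisely what enables $x$ to cross the $u$-block. Your specific move is also miscomputed: you write ``$u_p\le x<z_q$, which is exactly the hypothesis for $(K1)\colon z_q\,u_p\,x\mapsto z_q\,x\,u_p$'', but $(K1)$ is $yzx\mapsto yxz$ under $x<y\le z$, so for that window one needs $x<z_q\le u_p$, and $z_q\le u_p$ fails. No elementary move on the windows $u_{p-1}\,u_p\,x$ or $u_p\,x\,y_1$ swaps $u_p$ with $x$ either, because the required strict inequality always points the wrong way. A correct argument must keep the $v$'s in play throughout --- it is windows beginning with a $v_j$ that let the $u$'s migrate past~$x$.

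Your Step~2 inductive call is also ill-formed as written. To invoke the $(q{-}1)$-case on the word $x\,z_1\cdots z_{q-1}\,z_q\,y_1\cdots y_q$ you would need the $q{-}1$ letters immediately following $z_{q-1}$ to serve as the $y'$-block, and the first of those is $z_q$, which violates $y'_1<z'_1=z_1$. (Pairing the $y$-block as $y_2,\ldots,y_q$ against $z_1,\ldots,z_{q-1}$, as you suggest, would require $y_{j+1}<z_j$, which is not among the hypotheses.) The intermediate equivalence you aim for is in fact true, but it needs an extra manoeuvre --- for instance, first dragging $y_1$ leftward past $z_q,\ldots,z_2$ via repeated $(K1)$ on windows $z_{j-1}\,z_j\,y_1$ --- before any induction applies.
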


\begin{rem}\label{rem: sl lem}
  The left-hand side of equation \eqref{eq: sl lem} in Lemma \ref{lem: sl lem} equals the row-word of the punctured tableau
  \[
    \begin{ytableau}
      u_1 & \cdots & u_p & \bigcirc & y_1 & \cdots & y_q \\
      v_1 & \cdots & v_p & x & z_1 & \cdots & z_q
    \end{ytableau},
  \]
  while the right-hand side
  \[
    \begin{ytableau}
      u_1 & \cdots & u_p & x & y_1 & \cdots & y_q \\
      v_1 & \cdots & v_p & \bigcirc & z_1 & \cdots & z_q
    \end{ytableau}.
  \]
  This observation would help us to relate the Knuth equivalence and the sliding algorithm.
\end{rem}

\subsection{Row-insertion}
\begin{defi}\label{def: add(T,r,x)}
  Let $T \in \mathrm{SST}_n(\lambda)$, $r$ an addable row of $\lambda$, and $x \in [n]$.
  Define a tableau $\operatorname{add}(T,r,x)$ of shape $\operatorname{add}(\lambda,r)$ by
  \[
    (\operatorname{add}(T,r,x))(i,j) := \begin{cases}
      x & \text{ if } (i,j) = (r,\lambda_r+1), \\
      T(i,j) & \text{ otherwise},
    \end{cases}
  \]
  for each $(i,j) \in D(\operatorname{add}(\lambda,r))$.
\end{defi}

\begin{defi}\label{def: row-ins}
  The \emph{row-insertion} is the algorithm which takes $(T,x) \in \mathrm{SST}_n(\lambda) \times [n]$ as an input, and outputs a new semistandard tableau $T \leftarrow x$ as follows.
  \begin{enumerate}
    \item Initialize $r := 1$, $T_r := T$, $x_r := x$.
    \item\label{step: 2 row-ins} Set $c_r := \min\{ j \geq 1 \mid T(r,j) > x_r \}$.
    \item \begin{enumerate}
      \item If $c_r = \lambda_r+1$, then return $\operatorname{add}(T,r,x)$.
      \item Otherwise, set $T_{r+1}$ to be the tableau of shape $\lambda$ obtained from $T_r$ by replacing the $(r,c_r)$-entry with $x_r$:
      \[
        T_{r+1}(i,j) := \begin{cases}
          x_r & \text{ if } (i,j) = (r,c_r), \\
          T_r(i,j) & \text{ otherwise},
        \end{cases}
      \]
      for each $(i,j) \in D(\lambda)$.
      Also, set $x_{r+1} := T_r(r,c_r)$, increment $r$, and then go back to step \eqref{step: 2 row-ins}.
    \end{enumerate}
  \end{enumerate}
  As a byproduct of this algorithm, we obtain the sequences $(c_1,\dots,c_r)$ and $(x_1,\dots,x_r)$.
  We call them the \emph{row-insertion route} and the \emph{row-inserting letters} for $(T,x)$, respectively.
\end{defi}

\begin{ex}
  Let
  \[
    T := \begin{ytableau}
      1 & 1 & 2 & 3 & 3 \\
      3 & 3 & 4 & 8 \\
      6 & 6 & 8 \\
      8 & 8 & 9 \\
      9
    \end{ytableau}.
  \]
  Then,
  \[
    T \leftarrow 2 = \begin{ytableau}
      1 & 1 & 2 & *(lightgray) 2 & 3 \\
      3 & 3 & *(lightgray) 3 & 8 \\
      *(lightgray) 4 & 6 & 8 \\
      *(lightgray) 6 & 8 & 9 \\
      *(lightgray) 8 \\
      *(lightgray) 9
    \end{ytableau},
  \]
  where the shaded boxes represent the row-insertion route and the row-inserting letters; they are $(4,3,1,1,1,1)$ and $(2,3,4,6,8,9)$, respectively.
\end{ex}

\begin{prop}\label{prop: explicit desc row ins}
  Let $T \in \mathrm{SST}_n(\lambda)$ and $x \in [n]$.
  Set $S := T \leftarrow x$, $\mu := \operatorname{sh}(S)$, and let $(c_1,\dots,c_r)$ and $(x_1,\dots,x_r)$ denote the row-insertion route and the row-inserting letters for $(T,x)$, respectively.
  Then, the following hold:
  \begin{enumerate}
    \item\label{item: 1 prop: explicit desc row ins} $r \in [\ell(\lambda)+1]$,
    \item\label{item: 2 prop: explicit desc row ins} $c_i = \min\{ j \geq 1 \mid T(i,j) > x_i \}$ for all $i \in [r]$,
    \item\label{item: 3 prop: explicit desc row ins} $c_1 \geq \cdots \geq c_r = \lambda_r+1 = \mu_r$,
    \item\label{item: 4 prop: explicit desc row ins} $x_1 = x$ and $x_i = T(i-1,c_{i-1})$ for all $i \in [2,r]$,
    \item\label{item: 5 prop: explicit desc row ins} $x_1 < \cdots < x_r$,
    \item\label{item: 6 prop: explicit desc row ins} $\mu = \operatorname{add}(\lambda,r)$,
    \item\label{item: 7 prop: explicit desc row ins} for each $(i,j) \in D(\mu)$, we have
    \[
      S(i,j) = \begin{cases}
        x_i & \text{ if } i \in [r] \text{ and } j = c_i, \\
        T(i,j) & \text{ otherwise}.
      \end{cases}
    \]
  \end{enumerate}
\end{prop}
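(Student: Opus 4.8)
The plan is to prove all seven items essentially simultaneously by induction on $\ell(\lambda)$, or more precisely by tracking the algorithm row by row, since the statement is really just a careful bookkeeping of what Definition \ref{def: row-ins} does. I would first observe that the algorithm is well-defined: at each step $r$ the set $\{ j \geq 1 \mid T_r(r,j) > x_r \}$ is nonempty because $T_r(r,j) = \infty > x_r$ for $j > \lambda_r$, so $c_r$ exists and $c_r \leq \lambda_r + 1$; this already gives the bound needed for item \eqref{item: 1 prop: explicit desc row ins} once we know the recursion terminates. Termination (and hence $r \leq \ell(\lambda)+1$) follows because if $c_r \leq \lambda_r$ the algorithm passes to row $r+1$, and row $\ell(\lambda)+1$ is empty so $c_{\ell(\lambda)+1} = \lambda_{\ell(\lambda)+1}+1 = 1$ forces the "add" branch; this proves \eqref{item: 1 prop: explicit desc row ins}.

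Next I would establish items \eqref{item: 2 prop: explicit desc row ins} and \eqref{item: 4 prop: explicit desc row ins}, which are close to the definition. The subtlety in \eqref{item: 2 prop: explicit desc row ins} is that the algorithm defines $c_r$ using $T_r$, not the original $T$, so I need the invariant that $T_r(r,j) = T(r,j)$ for all $j$, i.e. that row $r$ of $T_r$ has not yet been modified when we reach step $r$. This holds because the only modification made when processing row $i$ is to the entry $(i,c_i)$ in row $i$, so rows $r, r+1, \dots$ of $T_r$ still agree with $T$. Item \eqref{item: 4 prop: explicit desc row ins} is then immediate from the assignment $x_{r+1} := T_r(r,c_r) = T(r,c_r)$ together with $x_1 = x$. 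For item \eqref{item: 5 prop: explicit desc row ins}, $x_i < x_{i+1}$: we have $x_{i+1} = T(i,c_i)$, and since $c_i = \min\{j \mid T(i,j) > x_i\}$ by \eqref{item: 2 prop: explicit desc row ins}, we get $x_{i+1} = T(i,c_i) > x_i$. Then \eqref{item: 6 prop: explicit desc row ins} is just the observation that the algorithm returns $\operatorname{add}(T,r,x)$ at row $r$, whose shape is $\operatorname{add}(\lambda,r)$, and $r$ is addable precisely because $c_r = \lambda_r+1$ means $T(r,\lambda_r+1)=\infty > x_r$ while the column-strictness and the invariant force $\lambda_r < \lambda_{r-1}$ — this last point needs a small argument using that $x_r = T(r-1,c_{r-1})$ sits in column $c_{r-1} = \lambda_r+1 \leq \lambda_{r-1}$.

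The remaining work is item \eqref{item: 3 prop: explicit desc row ins}, the weak decrease $c_1 \geq \cdots \geq c_r$, which I expect to be the main obstacle and the only genuinely nontrivial point. The inequality $c_{i+1} \leq c_i$ says the bumped letter $x_{i+1} = T(i,c_i)$ gets inserted into row $i+1$ at a column no further right than $c_i$. To see this, note $c_{i+1} = \min\{ j \mid T(i+1,j) > x_{i+1}\}$ by \eqref{item: 2 prop: explicit desc row ins}, so it suffices to show $T(i+1,c_i) > x_{i+1} = T(i,c_i)$; but this is exactly column-strictness of the semistandard tableau $T$ applied to column $c_i$ (valid as long as $(i+1,c_i)\in D(\lambda)$; if not, then $T(i+1,c_i)=\infty$ and the inequality is trivial, and this is the case where the algorithm terminates at or before row $i+1$). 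Finally, once \eqref{item: 2 prop: explicit desc row ins}--\eqref{item: 6 prop: explicit desc row ins} are in hand, item \eqref{item: 7 prop: explicit desc row ins} follows by unwinding the recursion: the entry at $(i,c_i)$ in the final tableau is whatever was written there last, namely $x_i$ (for $i < r$, written when processing row $i$; for $i = r$, the newly added box carries $x_r = x_1$ if $r=1$, else $T(r-1,c_{r-1})$), and every other box is untouched, so equals $T(i,j)$. I would present \eqref{item: 3 prop: explicit desc row ins} carefully and treat the rest as routine verification, explicitly flagging the invariant "rows $\geq r$ of $T_r$ equal those of $T$" as the lemma doing the real work throughout.
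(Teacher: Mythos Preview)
Your proposal is correct and matches the paper's approach exactly: the paper's own proof is the single line ``The assertions are straightforwardly deduced from Definition~\ref{def: row-ins}'', and you have carried out that deduction in detail. One minor slip to fix: in your sketch for item~\eqref{item: 6 prop: explicit desc row ins} you write $c_{r-1} = \lambda_r+1$, but only $c_{r-1} \geq c_r = \lambda_r+1$ holds in general, and that inequality is precisely item~\eqref{item: 3 prop: explicit desc row ins}; so it is cleanest to prove~\eqref{item: 3 prop: explicit desc row ins} first and then deduce addability of row $r$ from $\lambda_r + 1 = c_r \leq c_{r-1} \leq \lambda_{r-1}$ (the last inequality holding because the algorithm did not terminate at step $r-1$).
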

\begin{proof}
  The assertions are straightforwardly deduced from Definition \ref{def: row-ins}.
\end{proof}

\begin{thm}[{\cite[\S 1.1]{Ful97}}]
  The row-insertion gives rise to a bijection
  \[
    \mathrm{SST}_n(\lambda) \times [n] \to \bigsqcup_{\substack{\mu \in \mathrm{Par}_{\leq n} \\ \mu \supset \lambda,\ |\mu/\lambda|=1}} \mathrm{SST}_n(\mu).
  \]
\end{thm}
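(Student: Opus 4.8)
The plan is to construct an explicit inverse map and check that the two composites are the identity. Given $S \in \mathrm{SST}_n(\mu)$ with $\mu \supset \lambda$ and $|\mu/\lambda| = 1$, say $\mu = \operatorname{add}(\lambda,r)$ so that the unique box of $D(\mu) \setminus D(\lambda)$ lies at the end of row $r$, I would run the \emph{reverse row-insertion}: remove the entry $x_r := S(r,\lambda_r+1)$ from that box, then bump it upward through row $r-1$ by finding the rightmost position in row $r-1$ whose entry is strictly less than $x_r$, evicting that entry, and repeating until an entry is ejected out the top of the tableau. The ejected entry is declared to be $x$, and the resulting tableau of shape $\lambda$ is declared to be $T$. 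One must check that each intermediate array produced this way is again semistandard and has shape $\lambda$ (its shape never changes after the first step), so that the output is a legitimate pair in $\mathrm{SST}_n(\lambda) \times [n]$; this uses the column-strict and row-weak inequalities in exactly the way the forward direction does.

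The heart of the argument is that the forward and reverse procedures are mutually inverse, and here Proposition \ref{prop: explicit desc row ins} does most of the bookkeeping. In the forward direction, parts \eqref{item: 2 prop: explicit desc row ins}--\eqref{item: 7 prop: explicit desc row ins} give a complete closed-form description of $S = T \leftarrow x$ in terms of the route $(c_1,\dots,c_r)$ and the letters $(x_1,\dots,x_r)$: the entries of $S$ differ from those of $T$ precisely at the boxes $(i,c_i)$, where $S(i,c_i) = x_i$, the $c_i$ weakly decrease, and the $x_i$ strictly increase. I would show that the reverse procedure applied to $S$ recovers exactly this data. The key local claim is that in each row $i$ (processed from bottom to top in the reverse direction), the box vacated is again $(i,c_i)$: because $x_{i+1} = T(i,c_i)$ sits at position $c_i$ in row $i$ of $S$ and the entries of $S$ in row $i$ strictly to the right of $c_i$ all equal the corresponding entries of $T$, hence are $\geq T(i,c_i+1) \geq x_{i+1}$ (using weak increase), while $S(i,c_i) = x_i < x_{i+1}$, the rightmost entry of row $i$ in $S$ that is $< x_{i+1}$ is exactly the one at column $c_i$. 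This identifies the reverse route with $(c_r,\dots,c_1)$ and the reverse bumping letters with $(x_r,\dots,x_1)$, so reverse row-insertion sends $S \mapsto (T,x)$.

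For the other composite, one starts from an arbitrary pair $(T,x)$, notes that the reverse procedure on $T \leftarrow x$ returns $(T,x)$ by the previous paragraph, and conversely that starting from $(T,x)$, forming the reverse-insertion output requires nothing since we are composing in the order (forward then reverse); the symmetric statement — that applying forward row-insertion to the reverse-insertion output of any $S$ returns $S$ — follows by running the same local analysis in the opposite order, again governed by the weak/strict inequalities and by Proposition \ref{prop: explicit desc row ins}\eqref{item: 5 prop: explicit desc row ins} and \eqref{item: 7 prop: explicit desc row ins}. The main obstacle, and the only place genuine care is needed, is the well-definedness of the reverse map: verifying that at each upward bumping step there really is a valid box to vacate (the entry bumped up from row $i+1$ is strictly greater than $S(i+1,c_i) > S(i,c_i)$, so such a box exists) and that semistandardness is preserved throughout. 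Once that is in hand, the bijection statement is immediate; I expect this to be short precisely because Proposition \ref{prop: explicit desc row ins} has already been proved, so I would cite it heavily rather than re-deriving the structure of the route.
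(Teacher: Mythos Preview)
Your argument is correct and is precisely the standard one: construct reverse row-insertion as an explicit inverse and verify the two composites are the identity using the route/letter data of Proposition~\ref{prop: explicit desc row ins}. Note, however, that the paper does not supply its own proof of this theorem at all; it simply cites \cite[\S 1.1]{Ful97}, where exactly this reverse-bumping argument is carried out, so your proposal matches the intended reference rather than any in-paper proof.
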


\subsection{Sliding and the rectification}\label{subsect: sl rect}
\begin{defi}[\textit{cf}.\,{\cite[\S 1.2]{Ful97}}]\label{def: sl}
  The \emph{sliding} is the algorithm which takes a pair $(T,(r,c))$ consisting of a semistandard punctured tableau $T$ of shape $(\lambda,H)$ and a hole $(r,c) \in H$ as input, and outputs a new semistandard punctured tableau $\operatorname{Sl}(T;r,c)$ as follows.
  \begin{enumerate}
    \item Initialize $t := 0$, $T_t := T$, $r_t := r$, $c_t := c$, $H_t := H$.
    \item\label{step: 2 sl} \begin{enumerate}
      \item If $(r_t,c_t)$ is not slidable, then return $T_t$.
      \item If $T(r_t,c_t+1) < T(r_t+1,c_t)$, then set $r_{t+1} := r_t$, $c_{t+1} := c_t+1$.
      \item If $T(r_t,c_t+1) \geq T(r_t+1,c_t)$, then set $r_{t+1} := r_t+1$, $c_{t+1} := c_t$.
    \end{enumerate}
    \item Set $H_{t+1} := (H_t \setminus \{(r_t,c_t)\}) \sqcup \{(r_{t+1},c_{t+1})\}$, and $T_{t+1}$ to be the semistandard punctured tableau of shape $(\lambda,H_{t+1})$ defined by
    \[
      T_{t+1}(i,j) := \begin{cases}
        T_t(r_{t+1},c_{t+1}) & \text{ if } (i,j) = (r_t,c_t), \\
        T_t(i,j) & \text{ otherwise},
      \end{cases}
    \]
    for each $(i,j) \in D(\lambda,H_{t+1})$.
    \item Increment $t$ and then go back to step \eqref{step: 2 sl}.
  \end{enumerate}
\end{defi}

\begin{ex}\label{ex: sl}
  Let
  \[
    T := \begin{ytableau}
      1 & 1 & 2 & 2 & 3 \\
      \bigcirc & 3 & 3 & 8 \\
      6 & 6 & 8 \\
      8 & 8 & 9 \\
      9
    \end{ytableau}.
  \]
  Then,
  \[
    \operatorname{Sl}(T;2,1) = \begin{ytableau}
      1 & 1 & 2 & 2 & 3 \\
      *(lightgray) 3 & *(lightgray) 3 & *(lightgray) 8 & 8 \\
      6 & 6 & *(lightgray) 9 \\
      8 & 8 & *(lightgray) \bigcirc \\
      9
    \end{ytableau},
  \]
  where the shaded boxes represent the route of the hole.
\end{ex}

For a later use, we describe another algorithm which computes the sliding.

\begin{prop}\label{prop: sl route}
  Let $T \in \mathrm{SST}_n(\lambda,H)$ and $(r,c) \in H$.
  Let $S$ denote the punctured tableau obtained by the following algorithm.
  \begin{enumerate}
    \item Initialize $k := r-1$, $T_k := T$, $H_k := H$, $\gamma_k := c$.
    \item\label{step: 2 prop: sl route} Set
    \begin{align*}
      &\gamma_{k+1} := \min\{ j \geq \gamma_k \mid T_k(k+2,j) \leq T_k(k+1,j+1) \}, \\
      &H'_{k+1} := (H_k \setminus \{(k+1,\gamma_k)\}) \sqcup \{(k+1,\gamma_{k+1})\},
    \end{align*}
    and $T'_{k+1}$ to be the punctured tableau of shape $(\lambda,H'_{k+1})$ defined by
    \[
      T'_{k+1}(i,j) := \begin{cases}
        T_k(k+1,j+1) & \text{ if } i = k+1 \text{ and } j \in [\gamma_k,\gamma_{k+1}-1], \\
        T_k(i,j) & \text{ otherwise},
      \end{cases}
    \]
    for each $(i,j) \in D(\lambda,H'_{k+1})$.
    \item \begin{enumerate}
      \item If $(k+2,\gamma_{k+1}) \notin D(\lambda,H'_{k+1})$, then return $T'_{k+1}$.
      \item Otherwise, set
      \[
        H_{k+1} := (H'_{k+1} \setminus \{(k+1,\gamma_{k+1})\}) \sqcup \{(k+2,\gamma_{k+1})\},
      \]
      $T_{k+1}$ to be the punctured tableau of shape $(\lambda,H_{k+1})$ defined by
      \[
        T_{k+1}(i,j) := \begin{cases}
          T'_{k+1}(k+2,\gamma_{k+1}) & \text{ if } (i,j) = (k+1,\gamma_{k+1}), \\
          T'_{k+1}(i,j) & \text{ otherwise},
        \end{cases}
      \]
      for each $(i,j) \in D(\lambda,H_{k+1})$.
      Increment $k$ and then go back to step $\eqref{step: 2 prop: sl route}$.
    \end{enumerate}
  \end{enumerate}
  Then, we have
  \[
    S = \operatorname{Sl}(T;r,c).
  \]
  In particular, there exists a sequence $(\gamma_{r-1},\gamma_r,\dots,\gamma_k)$ of positive integers satisfying the following:
  \begin{enumerate}
    \item\label{item 1, prop: sl route} $c = \gamma_{r-1} \leq \cdots \leq \gamma_k$,
    \item\label{item 2, prop: sl route} $\gamma_i := \min\{ j \geq \gamma_{i-1} \mid T(i+1,j) \leq T(i,j+1) \}$ for all $i \in [r,k]$,
    \item $\operatorname{sh}(\operatorname{Sl}(T;r,c)) = (\lambda,H')$, where $H' := (H \setminus \{ (r,c) \}) \sqcup \{(k,\gamma_k)\})$,
    \item $(k,\gamma_k)$ is not a slidable hole of $(\lambda,H')$.
    \item for each $(i,j) \in D(\lambda, H')$, we have
    \[
      (\operatorname{Sl}(T;r,c))(i,j) = \begin{cases}
        T(i,j+1) & \text{ if } i \in [r,k] \text{ and } j \in [\gamma_{i-1},\gamma_i-1], \\
        T(i+1,j) & \text{ if } i \in [r,k-1] \text{ and } j = \gamma_i, \\
        T(i,j) & \text{ otherwise}.
      \end{cases}
    \]
  \end{enumerate}
\end{prop}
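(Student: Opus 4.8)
The plan is to show that the alternative algorithm in the statement traces out exactly the same sequence of moves as the sliding algorithm in Definition~\ref{def: sl}, merely grouped differently: the sliding moves the hole one box at a time, whereas the algorithm here moves the hole horizontally within a single row all at once (step~\eqref{step: 2 prop: sl route}, producing $\gamma_{k+1}$ from $\gamma_k$) and then performs a single vertical step (dropping the hole from row $k+1$ to row $k+2$). So the core of the argument is a bookkeeping correspondence between the index $t$ of Definition~\ref{def: sl} and the index $k$ here, together with the auxiliary sequence $(\gamma_{r-1},\dots,\gamma_k)$.

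First I would set up the induction. Suppose the sliding has reached a hole located at $(r',c')$ with $r' = k+1$ for the current value of $k$, and the tableau agrees with $T_k$ of the alternative algorithm (for the base case $k = r-1$, the hole is at $(r,c)$ and $T_{r-1} = T$). Within row $r'$, the sliding repeatedly compares $T(r',c'+1)$ with $T(r'+1,c')$; as long as the former is strictly smaller, i.e. as long as $T_k(k+2, \cdot) \le T_k(k+1, \cdot+1)$ fails to hold, the hole slides right. I claim this terminates precisely at column $\gamma_{k+1} = \min\{ j \ge \gamma_k \mid T_k(k+2,j) \le T_k(k+1,j+1)\}$, and that the resulting row contents are obtained by the leftward shift $T'_{k+1}(k+1,j) = T_k(k+1,j+1)$ for $j \in [\gamma_k,\gamma_{k+1}-1]$ described in the statement. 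This is where Lemma~\ref{lem: rows of punc par}\eqref{item: sl lem: rows of punc par} enters: it guarantees that the relevant stretch of row $r'$ to the right of the hole is an unbroken run of boxes of $D(\lambda,H)$ (no further holes), so that the horizontal sliding proceeds without interruption until the comparison with the row below forces it to stop or the row ends. If the row ends first — that is, if $(k+2,\gamma_{k+1}) \notin D(\lambda,H'_{k+1})$ — the hole is no longer slidable and the algorithm returns, matching the termination clause of Definition~\ref{def: sl}; otherwise the sliding takes one vertical step, which is exactly the passage from $T'_{k+1}$ to $T_{k+1}$, and we increment $k$, re-establishing the inductive hypothesis with the hole now in row $k+2$.

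Once this step-by-step matching is in place, $S = \operatorname{Sl}(T;r,c)$ follows immediately, and the five enumerated consequences are read off: \eqref{item 1, prop: sl route} is the monotonicity $\gamma_{r-1} \le \cdots \le \gamma_k$, which holds because each $\gamma_{i}$ is chosen as a minimum over $j \ge \gamma_{i-1}$; \eqref{item 2, prop: sl route} restates the definition of $\gamma_i$ after unwinding that the tableau seen at stage $i$ agrees with $T$ in the rows and columns being compared (here one must check that the entries $T_k(k+1,j+1)$ and $T_k(k+2,j)$ appearing in the test coincide with $T(i+1,j)$ and $T(i,j+1)$ — they do, because the only cells modified so far lie strictly above and to the left); the shape statement and non-slidability are direct from the termination condition; and the explicit entry formula is the accumulation of the leftward shifts in each traversed row plus the single upward move at the bottom of each row, exactly as tracked by $T'_{k+1}$ and $T_{k+1}$.

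The main obstacle I anticipate is the verification in step~\eqref{item 2, prop: sl route} — more precisely, showing that the entries compared by the alternative algorithm at stage $k$ are genuinely the \emph{original} entries $T(i+1,j)$ and $T(i,j+1)$ rather than entries that have already been disturbed by earlier horizontal shifts. The subtlety is that when the hole drops from row $i$ into row $i+1$ at column $\gamma_i$, the box $(i+1,\gamma_i)$ has been overwritten (it now holds what was $T(i,\gamma_i+1)$), so one must confirm that this altered box never re-enters a comparison as the algorithm continues in row $i+1$: the subsequent horizontal sliding in row $i+1$ starts at column $\gamma_i$ and moves \emph{rightward}, never revisiting columns $< \gamma_i$, and the comparison at column $\gamma_i$ itself is between $T'_{i+1}(i+1,\gamma_i+1)$ and $T'_{i+1}(i+2,\gamma_i)$, neither of which has been touched. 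A clean way to organize this is to maintain as part of the inductive hypothesis that $T_k$ agrees with $T$ on all cells $(i,j)$ with $i \ge k+1$, which is preserved by each step and makes the identification in \eqref{item 2, prop: sl route} transparent. Beyond this point the argument is routine.
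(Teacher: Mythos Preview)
Your plan is correct and matches the paper's approach---the paper simply asserts that the result follows straightforwardly from Definition~\ref{def: sl}, and you are spelling out that verification via the row-by-row grouping and the invariant that $T_k$ agrees with $T$ on rows $\ge k+1$, which is exactly the right bookkeeping for item~\eqref{item 2, prop: sl route}. Two small corrections: the appeal to Lemma~\ref{lem: rows of punc par}\eqref{item: sl lem: rows of punc par} is misplaced (its hypothesis is not assumed in the proposition) and unnecessary, since the $\infty$-convention for entries outside $D(\lambda,H)$ already makes both algorithms handle extra holes consistently; and in your parenthetical the cell $(i+1,\gamma_i)$ is not overwritten but becomes the hole (it is $(i,\gamma_i)$ that receives the entry $T(i+1,\gamma_i)$ from below), though this slip does not affect your argument.
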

\begin{proof}
  The assertions are straightforwardly deduced from Definition \ref{def: sl}.
\end{proof}

\begin{defi}\label{def: sl route}
  We call the sequence $(\gamma_{r-1},\gamma_r,\dots,\gamma_k)$ in Proposition \ref{prop: sl route} the \emph{sliding route} for $(T;r,c)$.
  Also, we call $k$ the \emph{terminal row} of the sliding for $(T;r,c)$.
\end{defi}

\begin{ex}
  Let $T$ be as in Exercise \ref{ex: sl}.
  Then, the sliding route for $(T;2,1)$ is $(1,3,3,3)$, and the terminal row is $4$.
\end{ex}

Clearly, the sliding changes the row-word of a given tableau in general.
However, it does not change the Knuth equivalence class of the row-word when the given hole is the latest slidable hole with respect to the lexicographic order $\leq_\mathrm{lex}$, as we will see below.

\begin{lem}\label{lem: rows near latest sl hole}
  Assume that $(\lambda,H)$ has a slidable hole, and that the latest slidable hole $(r,c)$ with respect to $\leq_\mathrm{lex}$ satisfies $(r+1,c) \in D(\lambda,H)$.
  Set
  \begin{itemize}
    \item $p := c-1$,
    \item $q := \min\{ j \geq 0 \mid (r+1,c+j+1) \notin D(\lambda,H) \}$,
    \item $p' := \sharp\{ j \in [c-1] \mid (r,j) \in D(\lambda,H) \}$,
    \item $q' := \min\{ j \geq 0 \mid (r,c+j+1) \notin D(\lambda,H) \}$.
  \end{itemize}
  Then, the following hold:
  \begin{enumerate}
    \item $p \geq p'$,
    \item $q \leq q'$,
    \item $\{ j \geq 1 \mid (r+1,j) \in D(\lambda,H) \} = [p+q+1]$,
    \item $\{ j > c \mid (r,j) \in D(\lambda,H) \} = [c+1,c+q']$.
  \end{enumerate}
  Namely, the $r$-th and $(r+1)$-st rows of $D(\lambda,H)$ is of the following form:
  \[
    \begin{tikzpicture}
      \draw[fill=gray, scale=1.7] (-1.2,0) rectangle (-0.8,-0.4);
      \draw[fill=gray, scale=1.7] (-0.8,0) rectangle (-0.4,-0.4) node (v1) {};
      \draw[fill=gray, scale=1.7] (-0.4,0) rectangle (0,-0.4) node (v2) {};
      \draw[scale=1.7] (0,0) rectangle (0.4,-0.4) node (v3) {};
      \draw[scale=1.7] (0.4,0) rectangle (0.8,-0.4) node (v4) {};
      \draw[scale=1.7] (0.8,0) rectangle (1.2,-0.4) node (v5) {};
      \draw[scale=1.7] (1.2,0) rectangle (1.6,-0.4) node (v6) {};
      \draw[scale=1.7] (1.6,0) rectangle (2,-0.4) node (v7) {};
      \draw[scale=1.7] (2,0) rectangle (2.4,-0.4) node (v8) {};
      \draw[scale=1.7] (2.4,0) rectangle (2.8,-0.4);
      \draw[scale=1.7] (2.8,0) rectangle (3.2,-0.4);
      \draw[scale=1.7] (3.2,0) rectangle (3.6,-0.4);
      \draw[scale=1.7] (-1.2,-0.4) rectangle (-0.8,-0.8);
      \draw[scale=1.7] (-0.8,-0.4) rectangle (-0.4,-0.8);
      \draw[scale=1.7] (v1) rectangle (0,-0.8);
      \draw[scale=1.7] (v2) rectangle (0.4,-0.8);
      \draw[scale=1.7] (v3) rectangle (0.8,-0.8);
      \draw[scale=1.7] (v4) rectangle (1.2,-0.8);
      \draw[scale=1.7] (v5) rectangle (1.6,-0.8);
      \draw[scale=1.7] (v6) rectangle (2,-0.8);
      \draw[scale=1.7] (v7) rectangle (2.4,-0.8);
      \draw[scale=1.7] (v8) rectangle (2.8,-0.8);

      \draw[scale=1.7] (0.2,-0.2) circle (0.15);
      \draw[scale=1.7] (2.2,-0.2) circle (0.15);
      \draw[scale=1.7] (2.6,-0.2) circle (0.15);
      \draw[scale=1.7] (3,-0.2) circle (0.15);
      \draw[scale=1.7] (3.4,-0.2) circle (0.15);
      \draw[scale=1.7] (1.8,-0.6) circle (0.15);
      \draw[scale=1.7] (2.2,-0.6) circle (0.15);
      \draw[scale=1.7] (2.6,-0.6) circle (0.15);
      
      \draw [decorate, decoration={brace, amplitude=5pt}](-2.05,0.1)  --  node[above,yshift=0.5em]{$p'$ boxes} (-0.05,0.1);
      \draw [decorate, decoration={brace, amplitude=5pt}](0.7,0.1)  --  node[above,yshift=0.5em]{$q'$} (3.4,0.1);
      \draw [decorate, decoration={brace, amplitude=5pt, mirror}](-2.05,-1.45)  --  node[below,yshift=-0.5em]{$p$} (-0.05,-1.45);
      \draw [decorate, decoration={brace, amplitude=5pt, mirror}](0.7,-1.45) --  node[below,yshift=-0.5em]{$q$} (2.7,-1.45);
      \node at (0.3,0.2) {$c$};
      \node at (-2.7,-0.35) {$r$};
      \node at (-2.7,-1.05) {$r+1$};
    \end{tikzpicture}
  \]
\end{lem}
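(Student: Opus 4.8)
The plan is to deduce everything from the characterization of the latest slidable hole together with Lemma \ref{lem: rows of punc par}. First I would fix notation: let $(r,c)$ be the latest slidable hole with respect to $\leq_\mathrm{lex}$, and recall that by hypothesis $(r+1,c) \in D(\lambda,H)$. Since $(r+1,c) \in D(\lambda,H)$, every box $(r+1,j)$ with $j \leq c$ lies in $D(\lambda)$, and since $\lambda_{r+1} \geq c$ the whole segment $(r+1,1),\dots,(r+1,c-1)$ consists of boxes of $D(\lambda)$; none of these can be a hole, because a hole $(r+1,j)$ with $j < c$ would be slidable (its right neighbor $(r+1,j+1)$ is in $D(\lambda,H)$) and later than $(r,c)$ in $\leq_\mathrm{lex}$, contradicting maximality. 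This already gives that $(r+1,1),\dots,(r+1,c-1)$ all belong to $D(\lambda,H)$; combined with the fact that these are the only boxes of row $r+1$ to the left of column $c$, this is the $p = c-1$ part of the third assertion once we also analyze the boxes to the right of $c$.

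Next I would handle the boxes of row $r+1$ in columns $> c$. There is no slidable hole $(r+1,c')$ with $c' > c$ (such a hole would be later than $(r,c)$), so Lemma \ref{lem: rows of punc par}\eqref{item: sl lem: rows of punc par}, applied with this row and starting column $c$, gives exactly $\{ j > c \mid (r+1,j) \in D(\lambda,H) \} = [c+1,c+q]$ with $q$ as defined in the statement. Together with the previous paragraph and the fact that $(r+1,c)$ itself is in $D(\lambda,H)$, this yields $\{ j \geq 1 \mid (r+1,j) \in D(\lambda,H) \} = [1,c-1] \sqcup \{c\} \sqcup [c+1,c+q] = [p+q+1]$, the third assertion. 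For the fourth assertion, I note that $(r,c)$ is a hole, hence a hole of the form $(r,c')$ with $c' \geq c$ exists; but no such hole with $c' > c$ can be slidable (again by maximality of $(r,c)$ in $\leq_\mathrm{lex}$), so Lemma \ref{lem: rows of punc par}\eqref{item: sl lem: rows of punc par} applies to row $r$ starting at column $c$ and gives $\{ j > c \mid (r,j) \in D(\lambda,H) \} = [c+1,c+q']$.

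Then I would establish the two inequalities. For $p \geq p'$: since $p = c-1$ and $p' = \sharp\{ j \in [c-1] \mid (r,j) \in D(\lambda,H) \}$ is the number of non-hole boxes among the $c-1$ boxes $(r,1),\dots,(r,c-1)$ of $D(\lambda)$ (all of which lie in $D(\lambda)$ because $\lambda_r \geq c$, as $(r,c)$ is a box), we get $p' \leq c-1 = p$ trivially. For $q \leq q'$: by definition $(r+1,c+q+1) \notin D(\lambda,H)$. If $q > q'$ then $(r+1,c+q'+1) \in D(\lambda,H)$ while $(r,c+q'+1) \notin D(\lambda,H)$; the latter box is in $D(\lambda)$ (since its lower neighbor is in $D(\lambda)$) so it is a hole, and it is slidable because $(r+1,c+q'+1)$ is directly below it in $D(\lambda,H)$ — but $(r,c+q'+1)$ is later than $(r,c)$ in $\leq_\mathrm{lex}$, contradicting maximality. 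Hence $q \leq q'$. The only genuinely delicate point is making sure all the boxes invoked actually lie in $D(\lambda)$ before calling them holes, which follows each time from the shape being a partition (if a box is present, so is everything above and to its left), so I do not expect a real obstacle; assembling the four bulleted facts then gives the displayed picture directly.
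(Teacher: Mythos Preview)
Your argument is correct and follows essentially the same route as the paper's own proof, which simply notes that $p\geq p'$ is immediate from the definitions and that the remaining assertions follow from Lemma~\ref{lem: rows of punc par}\,\eqref{item: sl lem: rows of punc par} together with the argument style of Proposition~\ref{prop: punc par wo (rev)slidable holes}\,\eqref{item: 1 lem punc par wo (rev)slidable holes}; you have spelled these steps out in detail. One small wording issue: when you rule out holes $(r+1,j)$ with $j<c$, the claim ``its right neighbor $(r+1,j+1)$ is in $D(\lambda,H)$'' is only automatic for the \emph{largest} such $j$, so phrase it as taking the maximal $j$ (or as a downward induction from $j=c-1$).
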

\begin{proof}
  By the definitions of $p$ and $p'$, we have
  \[
    p \geq p'.
  \]
  The other assertions follow from Lemma \ref{lem: rows of punc par} \eqref{item: sl lem: rows of punc par} and a similar argument to the proof of Proposition \ref{prop: punc par wo (rev)slidable holes} \eqref{item: 1 lem punc par wo (rev)slidable holes}.  
\end{proof}

\begin{prop}\label{prop: sl preserve Knu equiv}
  Let $T \in \mathrm{SST}_n(\lambda,H)$ and $(r,c) \in H$.
  If $(r,c)$ is the latest slidable hole with respect to $\leq_\mathrm{lex}$, then we have
  \[
    w_\mathrm{row}(T) \equiv w_\mathrm{row}(\operatorname{Sl}(T;r,c)).
  \]
\end{prop}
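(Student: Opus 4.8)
The statement asserts that sliding into the latest slidable hole preserves the Knuth equivalence class of the row-word. The natural strategy is to track the hole step-by-step through the sliding algorithm and show that each elementary move (or rather each ``row-transit'' of the hole, the basic unit described in Proposition~\ref{prop: sl route}) preserves the Knuth class, then compose. The crucial local statement is exactly Lemma~\ref{lem: sl lem}, whose two sides are the row-words of a two-row configuration before and after a hole slides rightward-then-down through those two rows (Remark~\ref{rem: sl lem}); so the whole proof is the task of matching the data in Proposition~\ref{prop: sl route} to the hypotheses of Lemma~\ref{lem: sl lem} and of handling the ``rest of the tableau'' cleanly.

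\emph{First} I would reduce to a single transit. By Proposition~\ref{prop: sl route}, $\operatorname{Sl}(T;r,c)$ is obtained by a sequence of steps indexed $k = r-1, r, \dots$; in step $k$ the hole, sitting at $(k+1,\gamma_{k})$, moves right to $(k+1,\gamma_{k+1})$ and then (unless it has reached the boundary) drops to $(k+2,\gamma_{k+1})$. Each such step only alters rows $k+1$ and $k+2$, and the new hole location $(k+2,\gamma_{k+1})$ is again the latest slidable hole of the intermediate punctured tableau — this needs a short check using that $\gamma_k \leq \gamma_{k+1}$ and the minimality defining $\gamma_{k+1}$, together with Lemma~\ref{lem: rows of punc par}. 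So it suffices to prove: if $(r,c)$ is the latest slidable hole and $(r+1,c) \in D(\lambda,H)$ (the case where the hole genuinely transits two rows rather than sliding purely rightward), then the one-transit move preserves $\equiv$; the pure-rightward case where the hole never drops is an even simpler application of the same lemma (take $q=0$ or degenerate the second block).

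\emph{Second}, for that one transit I would invoke Lemma~\ref{lem: rows near latest sl hole}: because $(r,c)$ is the latest slidable hole with $(r+1,c)$ present, the two relevant rows have precisely the shape drawn there — row $r+1$ is a solid block of length $p+q+1$ followed by holes, and row $r$ has a block of $p'$ boxes, then the hole $(r,c)$, then a block of $q'$ boxes (then possibly holes), with $p \geq p'$ and $q \leq q'$. Now read off the entries: let $u_1 \leq \dots \leq u_{p'}$, (then the hole), $y_1 \leq \dots \leq y_{q'}$ be the $r$-th row; $v_1 \leq \dots \leq v_{p+q+1}$ the $(r+1)$-st. Semistandardness gives $u_i < v_i$ for $i \leq p'$ and $y_j < v_{p+j}$ for $j \leq q'$ (reindexing the lower row so the inequalities line up under columns $c, c+1, \dots$), and it gives $v_{p} \leq x := v_{p+1} \leq \dots$ Wait — one must be careful which lower-row entry plays the role of $x$ in Lemma~\ref{lem: sl lem}: it is the entry of row $r+1$ sitting directly below the hole, and the algorithm's choice of $\gamma_{k+1}$ (the first column where $T(k+2,j) \leq T(k+1,j+1)$) is exactly the combinatorial condition that makes the sequence of rightward micro-moves consistent with the hypotheses $v_p \leq x \leq y_1$ and $y_j < z_j$ of the lemma. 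After this identification, Lemma~\ref{lem: sl lem} yields that the row-words of the two-row windows before and after are Knuth equivalent. Since everything outside rows $r$ and $r+1$ is untouched, and since the row-word of the whole tableau is the concatenation (bottom rows first) of the row-words of individual rows, the two full row-words differ only by replacing one contiguous factor by a Knuth-equivalent one; hence $w_\mathrm{row}(T) \equiv w_\mathrm{row}(\operatorname{Sl}(T;r,c))$. Composing over all transits finishes the proof.

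\emph{Main obstacle.} The genuine difficulty is bookkeeping, not ideas: (i) verifying that after one transit the dropped hole is again the \emph{latest} slidable hole, so that the induction/composition is legitimate — this is where the hypothesis on $(r,c)$ is really used and where Lemma~\ref{lem: rows near latest sl hole}'s description ($q \leq q'$, the block structure) does the work; and (ii) correctly aligning the index ranges $p,p',q,q'$ and identifying precisely which entry of row $r+1$ is the ``$x$'' of Lemma~\ref{lem: sl lem}, since $p > p'$ or $q < q'$ forces the two rows to have unequal visible lengths and one must pad or truncate the $u$'s and $z$'s accordingly (the extra lower-row entries beyond the hole's influence simply ride along on both sides of the equivalence). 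Once these alignments are pinned down, the Knuth equivalence is a direct citation of Lemma~\ref{lem: sl lem} via Remark~\ref{rem: sl lem}. One should also dispatch the boundary/degenerate cases — the hole starting in the last row, or $(r+1,c)\notin D(\lambda,H)$ so the hole only moves right — but these are strictly easier specializations.
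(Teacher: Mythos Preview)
Your proposal is correct and follows essentially the same route as the paper: induct along the path of the moving hole, verify at each stage that the current hole remains the latest slidable one, and for each downward move invoke Lemma~\ref{lem: rows near latest sl hole} to obtain the two-row structure and then Lemma~\ref{lem: sl lem} (via Remark~\ref{rem: sl lem}) for the Knuth equivalence. The only cosmetic difference is granularity---the paper inducts on the elementary steps $t$ of Definition~\ref{def: sl} (so a rightward move leaves the row-word literally unchanged and each downward move is a single application of Lemma~\ref{lem: sl lem}) rather than on the row-transits of Proposition~\ref{prop: sl route}; the alignment obstacle you flag ($p\neq p'$, $q\neq q'$) is handled in the paper exactly as you suggest, by absorbing the excess entries into the untouched prefix $w_1$ and suffix $w_2$ of the row-word.
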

\begin{proof}
  Let us keep the notation in Definition \ref{def: sl}.
  We will prove by induction on $t \geq 0$ the following:
  if $(r_t,c_t)$ is the latest slidable hole of $(\lambda,H_t)$, then the following hold.
  \begin{enumerate}
    \item\label{item: 1 sl preserve Knu equiv} $w_\mathrm{row}(T_t) \equiv w_\mathrm{row}(T_{t+1})$.
    \item\label{item: 2 sl preserve Knu equiv} If $(r_{t+1},c_{t+1})$ is a slidable hole of $(\lambda,H_{t+1})$, then it is the latest slidable hole.
  \end{enumerate}

  Since $(r_{t+1},c_{t+1})$ equals either $(r_t,c_t+1)$ or $(r_t+1,c_t)$, we have
  \[
    (r_t,c_t) \leq_\mathrm{lex} (r_{t+1},c_{t+1}).
  \]
  Also, since
  \[
    H_{t+1} = (H_t \setminus \{(r_t,c_t)\}) \sqcup \{(r_{t+1},c_{t+1})\},
  \]
  each slidable hole $(k,l)$ of $(\lambda,H_{t+1})$ must satisfy one of the following conditions:
  \begin{itemize}
    \item $(k,l) = (r_{t+1},c_{t+1})$,
    \item $(k,l)$ is a slidable hole of $(\lambda,H_t)$,
    \item $(k,l) = \{ (r_t,c_t-1),\ (r_t-1,c_t) \}$.
  \end{itemize}
  Hence, claim \eqref{item: 2 sl preserve Knu equiv} follows.

  Let us prove the claim \eqref{item: 1 sl preserve Knu equiv}.
  If $(r_{t+1},c_{t+1}) = (r_t,c_t+1)$, then $T_t$ and $T_{t+1}$ have the same row-word.
  In this case, we have nothing to prove.
  Hence, assume that $(r_{t+1},c_{t+1}) = (r_t+1,c_t)$.
  By Lemma \ref{lem: rows near latest sl hole}, we see that
  \[
    w_\mathrm{row}(T_t) = w_1 v_{p-p'+1} \cdots v_p x z_1 \cdots z_q u_1 \cdots u_{p'} y_1 \cdots y_q w_2,
  \]
  where
  \begin{itemize}
    \item $w_1,w_2$ are some words,
    \item $p,q,p',q'$ are some nonnegative integers such that $p = c_t-1$, $p \geq p'$ and $q \leq q'$,
    \item $v_j = T_t(r_t+1,j)$ for all $j \in [p-p'+1,p]$,
    \item $x = T_t(r_t+1,c_t)$,
    \item $z_j = T_t(r_t+1,c_t+j)$ for all $j \in [q]$,
    \item $u_j = T_t(r_t,l_j)$ for all $j \in [p']$, where $1 \leq l_1 < \cdots < l_{p'} < c_t$ are such that
    \[
      \{ j < c_t \mid (r_t,j) \in D(\lambda,H) \} = \{ l_1,\dots,l_{p'} \},
    \]
    \item $y_j = T_t(r_t,c_t+j)$ for all $j \in [q]$.
  \end{itemize}
  Then, we can apply Lemma \ref{lem: sl lem} to obtain
  \[
    w_\mathrm{row}(T_t) \equiv w_1 v_{p-p'+1} \cdots v_p z_1 \cdots z_q u_1 \cdots u_{p'} x y_1 \cdots y_q w_2.
  \]
  The right-hand side is nothing but $w_\mathrm{row}(T_{t+1})$; see Remark \ref{rem: sl lem}.
  Thus, we complete the proof.
\end{proof}

\begin{defi}\label{defi: rect}
  The \emph{rectification} of $T \in \mathrm{SST}_n(\lambda,H)$ is the semistandard tableau $\operatorname{Rect}(T)$ obtained by the following algorithm.
  \begin{enumerate}
    \item Initialize $t := 0$, $T_t := T$, $H_t := H$.
    \item\label{step: 2 rect}
    \begin{enumerate}
      \item If $H_t$ has no slidable holes, then return $T_t$.
      \item Otherwise, let $(r,c) \in H_t$ denote the latest slidable hole with respect to $\leq_\mathrm{lex}$, and set $T_{t+1} := \operatorname{Sl}(T_t;r,c)$.
    \end{enumerate}
    \item Increment $t$ and then go back to step \eqref{step: 2 rect}.
  \end{enumerate}
\end{defi}

\begin{ex}\label{ex: rect}
  Let
  \[
    T := \begin{ytableau}
      1 & 1 & 2 & 2 & 3 \\
      \bigcirc & 3 & 3 & 8 \\
      \bigcirc & 6 & 8 \\
      6 & 8 & 9 \\
      8 \\
      9
    \end{ytableau}.
  \]
  Then,
  \[
    T \xrightarrow{\operatorname{Sl}(\cdot;3,1)} \begin{ytableau}
      1 & 1 & 2 & 2 & 3 \\
      \bigcirc & 3 & 3 & 8 \\
      *(lightgray) 6 & 6 & 8 \\
      *(lightgray) 8 & 8 & 9 \\
      *(lightgray) 9 \\
      *(lightgray) \bigcirc
    \end{ytableau} \xrightarrow{\operatorname{Sl}(\cdot;2,1)} \begin{ytableau}
      1 & 1 & 2 & 2 & 3 \\
      *(lightgray) 3 & *(lightgray) 3 & *(lightgray) 8 & 8 \\
      6 & 6 & *(lightgray) 9 \\
      8 & 8 & *(lightgray) \bigcirc \\
      9 \\
      \bigcirc
    \end{ytableau}.
  \]
  Hence,
  \[
    \operatorname{Rect}(T) = \begin{ytableau}
      1 & 1 & 2 & 2 & 3 \\
      3 & 3 & 8 & 8 \\
      6 & 6 & 9 \\
      8 & 8 \\
      9
    \end{ytableau}
  \]
\end{ex}

\begin{prop}\label{prop: row word and rect}
  Let $T \in \mathrm{SST}_n(\lambda,H)$.
  Then, we have
  \[
    w_\mathrm{row}(T) \equiv w_\mathrm{row}(\operatorname{Rect}(T)).
  \]
\end{prop}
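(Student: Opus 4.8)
The plan is to reduce the statement to Proposition \ref{prop: sl preserve Knu equiv} by a straightforward induction on the number of sliding steps in the rectification algorithm. Recall from Definition \ref{defi: rect} that $\operatorname{Rect}(T)$ is computed by producing a sequence $T = T_0, T_1, T_2, \dots$ of semistandard punctured tableaux with $T_{t+1} := \operatorname{Sl}(T_t; r_t, c_t)$, where $(r_t,c_t)$ is the latest slidable hole of $\operatorname{sh}(T_t)$ with respect to $\leq_\mathrm{lex}$, the process stopping at the first tableau $T_N$ whose shape has no slidable hole; then $\operatorname{Rect}(T) = T_N$, which is a genuine semistandard tableau by Proposition \ref{prop: punc par wo (rev)slidable holes} \eqref{item: 1 lem punc par wo (rev)slidable holes}.

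First I would check that this sequence is finite, so that $N$ is well defined. For a punctured partition $(\mu,H)$ set $\Phi(\mu,H) := \sum_{(i,j) \in H} (i+j)$. Each elementary move inside a call of $\operatorname{Sl}$ replaces a hole $(i,j)$ by either $(i,j+1)$ or $(i+1,j)$ and leaves the other holes fixed, hence increases $\Phi$ by exactly $1$; since every rectification step slides at a hole that is, by construction, slidable, we get $\Phi(\operatorname{sh}(T_{t+1})) > \Phi(\operatorname{sh}(T_t))$ for all $t$ before termination. As every hole remains inside $D(\lambda)$, we have $\Phi \le |H|\,(\ell(\lambda)+\lambda_1)$ throughout, so only finitely many steps occur.

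Then I would run the induction. By Proposition \ref{prop: sl preserve Knu equiv}, for each $t \in [0, N-1]$ we have $w_\mathrm{row}(T_t) \equiv w_\mathrm{row}(T_{t+1})$, since $(r_t,c_t)$ is precisely the latest slidable hole of $\operatorname{sh}(T_t)$, which is exactly the hypothesis of that proposition. Because Knuth equivalence is transitive, chaining these relations yields $w_\mathrm{row}(T) = w_\mathrm{row}(T_0) \equiv w_\mathrm{row}(T_N) = w_\mathrm{row}(\operatorname{Rect}(T))$, as desired. There is essentially no real obstacle here: all the substance lies in Proposition \ref{prop: sl preserve Knu equiv}, and the only point requiring care is that the rectification always slides at the latest slidable hole, which holds by the very definition of $\operatorname{Rect}$; the termination bookkeeping is dispatched by the monovariant $\Phi$ above.
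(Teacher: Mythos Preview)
Your proof is correct and follows essentially the same approach as the paper: the paper simply states that the assertion follows from Proposition \ref{prop: sl preserve Knu equiv}, and you have merely spelled out the induction and added a termination argument for the rectification algorithm. No new ideas are involved beyond what the paper already indicates.
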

\begin{proof}
  The assertion immediately follows from Proposition \ref{prop: sl preserve Knu equiv}.
\end{proof}

Proposition \ref{prop: row word and rect}, together with Proposition \ref{prop: row word equiv}, implies that $\operatorname{Rect}(T)$ is the unique semistandard tableau whose row-word is Knuth equivalent to $w_\mathrm{row}(T)$.
This is a slight generalization of \cite[\S 2.1 Corollary 1]{Ful97}.

\subsection{Reverse sliding and the reverse rectification}
Almost all the results in this subsection have a counterpart in the previous subsection, and their proofs are similar.
Hence, we omit such proofs here.

\begin{defi}\label{def: rev sl}
  The \emph{reverse sliding} is the algorithm which takes a pair $(T,(r,c)) \in \mathrm{SST}_n(\lambda,H) \times H$ as an input, and outputs a new semistandard punctured tableau $\operatorname{Sl}(T;r,c)$ as follows.
  \begin{enumerate}
    \item Initialize $t := 0$, $T_t := T$, $r_t := r$, $c_t := c$, $H_t := H$.
    \item\label{step: 2 rev sl} \begin{enumerate}
      \item If $(r_t,c_t)$ is not reversely slidable, then return $T_t$.
      \item If $T(r_t,c_t-1) > T(r_t-1,c_t)$, then set $r_{t+1} := r_t$, $c_{t+1} := c_t-1$.
      \item If $T(r_t,c_t-1) \leq T(r_t-1,c_t)$, then set $r_{t+1} := r_t-1$, $c_{t+1} := c_t$.
    \end{enumerate}
    \item Set $H_{t+1} := (H_t \setminus \{(r_t,c_t)\}) \sqcup \{(r_{t+1},c_{t+1})\}$, and $T_{t+1}$ to be the semistandard punctured tableau of shape $(\lambda,H_{t+1})$ defined by
    \[
      T_{t+1}(i,j) := \begin{cases}
        T_t(r_{t+1},c_{t+1}) & \text{ if } (i,j) = (r_t,c_t), \\
        T_t(i,j) & \text{ otherwise},
      \end{cases}
    \]
    for each $(i,j) \in D(\lambda,H_{t+1})$.
    \item Increment $t$ and then go back to step \eqref{step: 2 rev sl}.
  \end{enumerate}
\end{defi}

\begin{ex}
  Let
  \[
    T := \begin{ytableau}
      1 & 1 & 2 & 2 & 3 \\
      3 & 3 & 8 & 8 \\
      6 & 6 & 9 \\
      8 & 8 & \bigcirc \\
      9
    \end{ytableau}.
  \]
  Then,
  \[
    \operatorname{Sl}(T;2,1) = \begin{ytableau}
      *(lightgray) \bigcirc & 1 & 2 & 2 & 3 \\
      *(lightgray) 1 & *(lightgray) 3 & *(lightgray) 3 & 8 \\
      6 & 6 & *(lightgray) 8 \\
      8 & 8 & *(lightgray) 9 \\
      9
    \end{ytableau},
  \]
  where the shaded boxes represent the route of the hole.
\end{ex}

\begin{lem}\label{lem: rows near first rev sl hole}
  Assume that $H$ has a reversely slidable hole, and that the first reversely slidable hole $(r,c)$ with respect to $\leq_\mathrm{lex}$ satisfies $(r-1,c) \in D(\lambda,H)$.
  Set
  \begin{itemize}
    \item $p := \min\{ j \geq 0 \mid (r-1,c-j-1) \notin D(\lambda,H) \}$,
    \item $q := \lambda_{r-1} - c$,
    \item $p' := \min\{ j \geq 0 \mid (r,c-j-1) \notin D(\lambda,H) \}$.
    \item $q' := \sharp\{ j > c \mid (r,j) \in D(\lambda,H) \}$,
  \end{itemize}
  Then, the following hold:
  \begin{enumerate}
    \item $p \leq p'$,
    \item $q \geq q'$,
    \item $\{ j \geq 1 \mid (r-1,j) \in D(\lambda,H) \} = [c-p,c+q]$,
    \item $\{ j < c \mid (r,j) \in D(\lambda,H) \} = [c-p',c-1]$.
  \end{enumerate}
  Namely, the $(r-1)$-st and $r$-th rows of $D(\lambda,H)$ is of the following form:
  \[
    \begin{tikzpicture}
      \draw[scale=1.7] (-1.6,0.6) rectangle (-1.2,0.2) node (v1) {};
      \draw[scale=1.7] (-1.2,0.6) rectangle (-0.8,0.2) node (v2) {};
      \draw[scale=1.7] (-0.8,0.6) rectangle (-0.4,0.2);
      \draw[scale=1.7] (-0.4,0.6) rectangle (0,0.2);
      \draw[scale=1.7] (0,0.6) rectangle (0.4,0.2);
      \draw[scale=1.7] (0.4,0.6) rectangle (0.8,0.2);
      \draw[scale=1.7] (0.8,0.6) rectangle (1.2,0.2);
      \draw[scale=1.7] (1.2,0.6) rectangle (1.6,0.2);
      \draw[scale=1.7] (1.6,0.6) rectangle (2,0.2);
      \draw[scale=1.7] (2,0.6) rectangle (2.4,0.2);
      \draw[scale=1.7] (2.4,0.6) rectangle (2.8,0.2);
      \draw[scale=1.7] (2.8,0.6) rectangle (3.2,0.2);
      \draw[scale=1.7] (3.2,0.6) rectangle (3.6,0.2);
      \draw[scale=1.7] (3.6,0.6) rectangle (4,0.2);
      \draw[scale=1.7] (-1.6,0.2) rectangle (-1.2,-0.2);
      \draw[scale=1.7] (v1) rectangle (-0.8,-0.2);
      \draw[scale=1.7] (v2) rectangle (-0.4,-0.2);
      \draw[scale=1.7] (-0.4,0.2) rectangle (0,-0.2);
      \draw[scale=1.7] (0,0.2) rectangle (0.4,-0.2);
      \draw[scale=1.7] (0.4,0.2) rectangle (0.8,-0.2);
      \draw[scale=1.7] (0.8,0.2) rectangle (1.2,-0.2);
      \draw[scale=1.7] (1.2,0.2) rectangle (1.6,-0.2);
      \draw[scale=1.7] (1.6,0.2) rectangle (2,-0.2);
      \draw[fill=gray,scale=1.7] (2,0.2) rectangle (2.4,-0.2);
      \draw[fill=gray,scale=1.7] (2.4,0.2) rectangle (2.8,-0.2);
      \draw[fill=gray,scale=1.7] (2.8,0.2) rectangle (3.2,-0.2);

      \draw[scale=1.7] (-1.4,0.4) circle (0.15);
      \draw[scale=1.7] (-1,0.4) circle (0.15);
      \draw[scale=1.7] (-0.6,0.4) circle (0.15);
      \draw[scale=1.7] (-0.2,0.4) circle (0.15);
      \draw[scale=1.7] (0.2,0.4) circle (0.15);
      \draw[scale=1.7] (-1.4,0) circle (0.15);
      \draw[scale=1.7] (-1,0) circle (0.15);
      \draw[scale=1.7] (-0.6,0) circle (0.15);
      \draw[scale=1.7] (1.8,0) circle (0.15);
      
      \draw [decorate, decoration={brace, amplitude=5pt}](0.7,1.1)  --  node[above,yshift=0.5em]{$p$} (2.7,1.1);
      \draw [decorate, decoration={brace, amplitude=5pt}](3.45,1.1)  --  node[above,yshift=0.5em]{$q$} (6.75,1.1);
      \draw [decorate, decoration={brace, amplitude=5pt, mirror}](-0.65,-0.45)  --  node[below,yshift=-0.5em]{$p'$} (2.7,-0.45);
      \draw [decorate, decoration={brace, amplitude=5pt, mirror}](3.45,-0.45) --  node[below,yshift=-0.5em]{$q'$ boxes} (5.45,-0.45);

      \node at (3.05,1.2) {$c$};
      \node at (-3.5,0.65) {$r-1$};
      \node at (-3.5,-0.05) {$r$};
\end{tikzpicture}
  \]
\end{lem}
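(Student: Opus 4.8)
The plan is to mirror, almost verbatim, the proof of Lemma \ref{lem: rows near latest sl hole}, exploiting the fact that this lemma is the ``reverse'' version: reversely slidable holes of $(\lambda,H)$ correspond to slidable holes after the obvious top--bottom/left--right reflection, and the first hole with respect to $\leq_\mathrm{lex}$ becomes the latest. I would not actually invoke a reflection functor (which would require setting up reflections of punctured partitions), but rather repeat the short combinatorial arguments directly, since each of the four claims reduces to a one-line application of Lemma \ref{lem: rows of punc par} \eqref{item: rev sl lem: rows of punc par}.

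First I would record the two easy inequalities. The bound $p \le p'$ is immediate from the definitions: $p = \min\{ j \ge 0 \mid (r-1,c-j-1)\notin D(\lambda,H)\}$ and $p' = \min\{ j \ge 0 \mid (r,c-j-1)\notin D(\lambda,H)\}$, and since $(r,c)\in H\subseteq D(\lambda)$ we have $\lambda_r \ge c$, hence $\lambda_{r-1}\ge \lambda_r \ge c$, so the $(r-1)$-st row extends at least as far left as the $r$-th; but more to the point, any box $(r,c-j-1)\in D(\lambda,H)$ forces $(r-1,c-j-1)\in D(\lambda)$, and if it were a hole then $(r-1,c-j-1)$ would be reversely slidable (its right neighbour or the box below it lies in $D(\lambda,H)$) and earlier than $(r,c)$ in $\leq_\mathrm{lex}$, contradicting minimality of $(r,c)$ --- wait, I need $(r-1,c-j-1)\notin H$, i.e. it is genuinely in $D(\lambda,H)$; that is exactly what prevents $p$ from being smaller than $p'$, giving $p \le p'$. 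For $q \ge q'$: by definition $q = \lambda_{r-1}-c$ counts all boxes of row $r-1$ strictly right of column $c$ that lie in $D(\lambda)$, whereas $q'$ counts only those strictly right of $c$ lying in $D(\lambda,H)$; since $D(\lambda,H)\subseteq D(\lambda)$, we get $q \ge q'$ trivially --- but I also need that row $r-1$ has no holes to the right of $c$, which follows because such a hole would be reversely slidable (its left neighbour or the box below it is in $D(\lambda,H)$) and $\leq_\mathrm{lex}$-earlier than $(r,c)$.

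Next I would establish the two descriptions of the rows. For the $r$-th row: $(r,c)$ being the first reversely slidable hole means there is no reversely slidable hole $(r,c')$ with $c' < c$, so Lemma \ref{lem: rows of punc par} \eqref{item: rev sl lem: rows of punc par} applied at $(r,c)$ gives $\{ j < c \mid (r,j)\in D(\lambda,H)\} = [c-p',c-1]$ with $p'$ as defined. For the $(r-1)$-st row I argue, as sketched above, that it contains no reversely slidable hole to the left of $c$ and no hole at all at or to the right of $c$ (any such would be reversely slidable and $\leq_\mathrm{lex}$-earlier), hence applying Lemma \ref{lem: rows of punc par} \eqref{item: rev sl lem: rows of punc par} at $(r-1,c+q)$ (the rightmost box, using that $(r-1,c+q)\in D(\lambda,H)$ is the last box of the row and is non-hole, together with $(r-1,c)\in D(\lambda,H)$) yields that the whole run of non-hole boxes in row $r-1$ is the interval $[c-p,c+q]$. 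The picture then follows by bookkeeping.

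The main obstacle, as in Lemma \ref{lem: rows near latest sl hole}, is verifying the ``no reversely slidable hole appears where it shouldn't'' claims carefully, i.e.\ that rows $r-1$ and $r$ really are unbroken intervals in the relevant ranges and that no hole of $(\lambda,H)$ violating the required shape can hide between $(r,c)$ and the boxes we use. This is precisely the content that Lemma \ref{lem: rows of punc par} is designed to handle, so each verification is a short argument-by-contradiction of the type ``a violating hole would be reversely slidable and $\leq_\mathrm{lex}$-smaller than $(r,c)$'', exactly paralleling the proof of Proposition \ref{prop: punc par wo (rev)slidable holes} \eqref{item: 2 lem punc par wo (rev)slidable holes}. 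Since the excerpt already says proofs in this subsection are omitted because they mirror the previous one, I would state the proof as: ``The inequalities $p\le p'$ and $q\ge q'$ are immediate from the definitions; the remaining assertions follow from Lemma \ref{lem: rows of punc par} \eqref{item: rev sl lem: rows of punc par} and an argument similar to the proof of Proposition \ref{prop: punc par wo (rev)slidable holes} \eqref{item: 2 lem punc par wo (rev)slidable holes}.''
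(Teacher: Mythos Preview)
Your overall plan---mirror the proof of Lemma~\ref{lem: rows near latest sl hole}, reduce to Lemma~\ref{lem: rows of punc par}\,\eqref{item: rev sl lem: rows of punc par}, and handle the leftover inequality by a ``violating hole would be $\leq_\mathrm{lex}$-earlier'' contradiction---is exactly what the paper intends (the proof is omitted there with a blanket reference to the previous subsection). But your execution of the inequality $p \le p'$ has the roles of the two rows reversed, and this is not a cosmetic slip.

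In the forward lemma, the inequality that is truly ``immediate from the definitions'' is $p \ge p'$, because there $p = c-1$ simply counts columns while $p'$ counts a subset. The direct analogue here is $q \ge q'$: since $q' \le \lambda_r - c \le \lambda_{r-1} - c = q$, this needs nothing but $\lambda_{r-1} \ge \lambda_r$. By contrast, $p \le p'$ is \emph{not} immediate from the definitions; it is the analogue of $q \le q'$ in the forward lemma and requires the slidability argument. Your sketch for $p \le p'$ actually proves the opposite direction: you start from $(r,c-j-1) \in D(\lambda,H)$ and try to force $(r-1,c-j-1) \in D(\lambda,H)$, which (if it worked) would yield $p \ge p'$, not $p \le p'$. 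Moreover, the parenthetical justification ``its right neighbour or the box below it lies in $D(\lambda,H)$'' is the condition for \emph{slidable}, not reversely slidable, so the contradiction with the minimality of $(r,c)$ does not fire. A concrete counterexample to your argument: take $\lambda = (3,3)$, $H = \{(1,1),(2,2)\}$; then $(r,c)=(2,2)$, $p=0$, $p'=1$, and $(1,1)$ is a hole that is slidable but not reversely slidable.

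The correct argument runs in row $r$, not row $r-1$: if $p > p'$, then $(r-1,c-p'-1) \in D(\lambda,H)$ while $(r,c-p'-1) \notin D(\lambda,H)$; since $c-p'-1 \in [1,\lambda_r]$, the box $(r,c-p'-1)$ is a hole, and it is reversely slidable because the box \emph{above} it lies in $D(\lambda,H)$; being $\leq_\mathrm{lex}$-earlier than $(r,c)$, this contradicts minimality. With this fix your final one-line proof should read: ``By the definitions of $q$ and $q'$, we have $q \ge q'$. The other assertions follow from Lemma~\ref{lem: rows of punc par}\,\eqref{item: rev sl lem: rows of punc par} and an argument similar to the proof of Proposition~\ref{prop: punc par wo (rev)slidable holes}\,\eqref{item: 2 lem punc par wo (rev)slidable holes}.''
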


\begin{prop}\label{prop: rev sl preserve Knu equiv}
  Let $T \in \mathrm{SST}_n(\lambda,H)$ and $(r,c) \in H$.
  If $(r,c)$ is the first reversely slidable hole with respect to $\leq_\mathrm{lex}$, then we have
  \[
    w_\mathrm{row}(T) \equiv w_\mathrm{row}(\operatorname{revSl}(T;r,c)).
  \]
\end{prop}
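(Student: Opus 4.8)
The plan is to mirror the proof of Proposition~\ref{prop: sl preserve Knu equiv} almost verbatim, exchanging the roles of ``latest slidable hole'' and ``first reversely slidable hole'' and replacing Lemma~\ref{lem: rows near latest sl hole} by its reverse counterpart Lemma~\ref{lem: rows near first rev sl hole} (which was set up precisely for this purpose). Keeping the notation of Definition~\ref{def: rev sl}, I would prove by induction on $t \geq 0$ the following two-part claim: if $(r_t,c_t)$ is the first reversely slidable hole of $(\lambda,H_t)$ with respect to $\leq_\mathrm{lex}$, then (i) $w_\mathrm{row}(T_t) \equiv w_\mathrm{row}(T_{t+1})$, and (ii) if $(r_{t+1},c_{t+1})$ is a reversely slidable hole of $(\lambda,H_{t+1})$, then it is again the first one. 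Granting this, the proposition follows by telescoping the equivalences $w_\mathrm{row}(T_0) \equiv w_\mathrm{row}(T_1) \equiv \cdots$ along the run of the algorithm, exactly as in the forward case.

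For part (ii) I would argue as in Proposition~\ref{prop: sl preserve Knu equiv}: since $(r_{t+1},c_{t+1})$ is either $(r_t,c_t-1)$ or $(r_t-1,c_t)$, it satisfies $(r_{t+1},c_{t+1}) \leq_\mathrm{lex}(r_t,c_t)$, and because $H_{t+1}$ differs from $H_t$ only by deleting $(r_t,c_t)$ and inserting $(r_{t+1},c_{t+1})$, every reversely slidable hole of $(\lambda,H_{t+1})$ is either $(r_{t+1},c_{t+1})$ itself, a reversely slidable hole of $(\lambda,H_t)$ (hence $\geq_\mathrm{lex}(r_t,c_t)$ because $(r_t,c_t)$ was the first), or one of $(r_t,c_t+1)$, $(r_t+1,c_t)$; in every case it is $\geq_\mathrm{lex}(r_{t+1},c_{t+1})$, which proves (ii).

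For part (i) there are two cases. If $(r_{t+1},c_{t+1}) = (r_t,c_t-1)$, the hole merely swaps places with its left neighbour inside row $r_t$, so the row-word does not change. If $(r_{t+1},c_{t+1}) = (r_t-1,c_t)$, I would apply Lemma~\ref{lem: rows near first rev sl hole} with $(r,c)=(r_t,c_t)$ to describe rows $r_t-1$ and $r_t$: row $r_t-1$ occupies columns $[c-p,c+q]$, while row $r_t$ occupies $[c-p',c-1]$ to the left of the hole together with $q'$ cells to its right, where $p\le p'$ and $q\ge q'$. One slices $w_\mathrm{row}(T_t)$ into a prefix, the contribution of rows $r_t-1$ and $r_t$, and a suffix; pushing the extra $p'-p$ leftmost entries of row $r_t$ into the prefix and the extra $q-q'$ rightmost entries of row $r_t-1$ into the suffix exhibits the middle block as the right-hand side of equation~\eqref{eq: sl lem} with parameters $p$ and $q'$, where $x=T_t(r_t-1,c_t)$, the $v_j,z_j$ are entries of row $r_t$, and the $u_j,y_j$ are entries of row $r_t-1$. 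All hypotheses of Lemma~\ref{lem: sl lem} then hold: $u_i<v_i$ and $y_j<z_j$ are column-strictness of $T_t$ (now read off in the \emph{same} column, since here it is the upper row that is shorter on the left), the monotonicities are row-weak-increase, and $v_p\le x$ is exactly the branching condition $T_t(r_t,c_t-1)\le T_t(r_t-1,c_t)$ that triggered the vertical move in Definition~\ref{def: rev sl}. By Lemma~\ref{lem: sl lem} and Remark~\ref{rem: sl lem} this middle block is Knuth equivalent to the left-hand side of~\eqref{eq: sl lem}, which is the corresponding slice of $w_\mathrm{row}(T_{t+1})$, and (i) follows.

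The step I expect to require the most care is the bookkeeping in case (i): matching the asymmetric row lengths produced by Lemma~\ref{lem: rows near first rev sl hole} (one row longer on the left, the other on the right) to the symmetric $p,q$-format of Lemma~\ref{lem: sl lem}, i.e.\ deciding exactly which entries are absorbed into the filler words and checking that nothing in between is dropped, together with a handful of degenerate configurations (such as $q'=0$ or $(r_t-1,c_t)$ lying at the end of its row) which should be noted but cause no real difficulty, just as in the forward proof. An alternative I would fall back on only if the direct argument became unwieldy is to deduce the statement from Proposition~\ref{prop: sl preserve Knu equiv} via a $180^\circ$-rotation/complementation symmetry interchanging sliding and reverse sliding; but since Lemma~\ref{lem: rows near first rev sl hole} is already in place, the direct route is the shorter one.
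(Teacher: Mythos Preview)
Your proposal is correct and is exactly the mirror argument the paper intends (the paper omits this proof, noting only that it parallels Proposition~\ref{prop: sl preserve Knu equiv}). One small point of bookkeeping: the $q'$ entries of row $r_t$ to the right of the hole need not lie in consecutive columns (Lemma~\ref{lem: rows near first rev sl hole} only counts them), so $y_j<z_j$ is not literally a same-column comparison but follows from $m_j\ge c_t+j$ via row-monotonicity and column-strictness combined---precisely the kind of detail you already flagged as requiring care.
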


\begin{defi}\label{defi: rev rect}
  The \emph{reverse rectification} of $T \in \mathrm{SST}_n(\lambda,H)$ is the semistandard skew tableau $\operatorname{revRect}(T)$ obtained by the following algorithm.
  \begin{enumerate}
    \item Initialize $t := 0$, $T_t := T$, $H_t := H$.
    \item\label{step: 2 rev rect}
    \begin{enumerate}
      \item If $H_t$ has no reversely slidable holes, then return $T_t$.
      \item Otherwise, let $(r,c) \in H_t$ denote the first reversely slidable hole with respect to $\leq_\mathrm{lex}$, and set $T_{t+1} := \operatorname{revSl}(T_t;r,c)$.
    \end{enumerate}
    \item Increment $t$ and then go back to step \eqref{step: 2 rev rect}.
  \end{enumerate}
\end{defi}

\begin{ex}
  Let
  \[
    T := \begin{ytableau}
      1 & 1 & 2 & 2 & 3 \\
      3 & 3 & 8 & 8 \\
      6 & 6 & 9 \\
      8 & 8 & \bigcirc \\
      9 \\
      \bigcirc
    \end{ytableau}.
  \]
  Then,
  \[
    T \xrightarrow{\operatorname{revSl}(\cdot;4,3)} \begin{ytableau}
      *(lightgray) \bigcirc & 1 & 2 & 2 & 3 \\
      *(lightgray) 1 & *(lightgray) 3 & *(lightgray) 3 & 8 \\
      6 & 6 & *(lightgray) 8 \\
      8 & 8 & *(lightgray) 9 \\
      9 \\
      \bigcirc
    \end{ytableau} \xrightarrow{\operatorname{revSl}(\cdot;6,1)} \begin{ytableau}
      \bigcirc & 1 & 2 & 2 & 3 \\
      *(lightgray) \bigcirc & 3 & 3 & 8 \\
      *(lightgray) 1 & 6 & 8 \\
      *(lightgray) 6 & 8 & 9 \\
      *(lightgray) 8 \\
      *(lightgray) 9
    \end{ytableau}.
  \]
  Hence,
  \[
    \operatorname{revRect}(T) = \begin{ytableau}
      \none & 1 & 2 & 2 & 3 \\
      \none & 3 & 3 & 8 \\
      1 & 6 & 8 \\
      6 & 8 & 9 \\
      8 \\
      9
    \end{ytableau}.
  \]
\end{ex}

\begin{prop}\label{prop: row word and rev rect}
  Let $T \in \mathrm{SST}_n(\lambda,H)$.
  Then, we have
  \[
    w_\mathrm{row}(T) \equiv w_\mathrm{row}(\operatorname{revRect}(T)).
  \]
\end{prop}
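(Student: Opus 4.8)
The plan is to reduce Proposition \ref{prop: row word and rev rect} to its sliding-step analogue, exactly as Proposition \ref{prop: row word and rect} was deduced from Proposition \ref{prop: sl preserve Knu equiv}. Concretely, keep the notation of Definition \ref{defi: rev rect}: we have tableaux $T = T_0, T_1, T_2, \dots$ with $T_{t+1} = \operatorname{revSl}(T_t; r, c)$, where $(r,c)$ is the first reversely slidable hole of $(\lambda, H_t)$ with respect to $\leq_\mathrm{lex}$, and the algorithm terminates at some step $t = N$ with $H_N$ having no reversely slidable holes, so that $\operatorname{revRect}(T) = T_N$. By Proposition \ref{prop: rev sl preserve Knu equiv}, each step satisfies $w_\mathrm{row}(T_t) \equiv w_\mathrm{row}(T_{t+1})$, since at each step we apply the reverse sliding to the first reversely slidable hole. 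Chaining these equivalences together along the (finite) sequence of steps gives $w_\mathrm{row}(T) = w_\mathrm{row}(T_0) \equiv w_\mathrm{row}(T_N) = w_\mathrm{row}(\operatorname{revRect}(T))$, which is the claim.

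The only points that need a word of justification are that the algorithm terminates and that Proposition \ref{prop: rev sl preserve Knu equiv} genuinely applies at every step. Termination: each call to $\operatorname{revSl}$ moves a single hole strictly earlier in $\leq_\mathrm{lex}$ or leaves it in place after finitely many internal moves that each strictly decrease a coordinate; more simply, one can assign to $(\lambda, H)$ the monovariant $\sum_{(i,j) \in H} (i + j)$, which strictly decreases with each internal move of the reverse sliding and is bounded below, so each $\operatorname{revSl}$ call halts, and the outer loop in Definition \ref{defi: rev rect} likewise strictly decreases this quantity, hence halts. (This is implicitly the content of Definition \ref{def: rev sl} and its termination, which we may take for granted in parallel with Definition \ref{def: sl}.) Applicability of Proposition \ref{prop: rev sl preserve Knu equiv}: at step $t$ the hole $(r,c)$ chosen is by construction the first reversely slidable hole of $(\lambda, H_t)$ with respect to $\leq_\mathrm{lex}$, which is exactly the hypothesis of that proposition, so there is nothing further to check.

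Since Proposition \ref{prop: rev sl preserve Knu equiv} is stated as the reverse-sliding counterpart of Proposition \ref{prop: sl preserve Knu equiv} and its proof is declared analogous, this deduction is the mirror image of the proof of Proposition \ref{prop: row word and rect}. I do not expect any genuine obstacle here: the argument is a one-line induction (or a direct finite chaining) on the number of reverse sliding steps, with the substantive work already packaged into Proposition \ref{prop: rev sl preserve Knu equiv}. If one wanted to be fully careful, the single point worth stating explicitly is that the Knuth equivalence $\equiv$ is transitive — which is immediate from Definition \ref{def: knu equiv}, since it is generated as an equivalence relation by the elementary Knuth transformations — so that the step-by-step equivalences compose. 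Thus the proof reads: ``The assertion immediately follows from Proposition \ref{prop: rev sl preserve Knu equiv}.''
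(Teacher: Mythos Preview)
Your proof is correct and follows exactly the approach the paper intends: the paper omits the proof here, declaring it analogous to that of Proposition \ref{prop: row word and rect}, which is the one-line deduction from Proposition \ref{prop: rev sl preserve Knu equiv} that you spell out. Your added remarks on termination and transitivity are harmless elaborations of points the paper takes for granted.
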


\begin{cor}\label{cor: rect = rect rev rect}
  Let $T \in \mathrm{SST}_n(\lambda,H)$.
  Then, we have
  \[
    \operatorname{Rect}(T) = \operatorname{Rect}(\operatorname{revRect}(T));
  \]
  here we regard $\operatorname{revRect}(T)$ as a punctured tableau by identifying its shape, which is a skew partition, as a punctured partition\rm{;} see \rm{Remark \ref{rem: skew par is punc par}}.
\end{cor}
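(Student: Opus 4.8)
The statement to prove is Corollary~\ref{cor: rect = rect rev rect}: for $T \in \mathrm{SST}_n(\lambda,H)$, we have $\operatorname{Rect}(T) = \operatorname{Rect}(\operatorname{revRect}(T))$, where $\operatorname{revRect}(T)$ is regarded as a punctured tableau via Remark~\ref{rem: skew par is punc par}. The plan is to reduce everything to the uniqueness statement that follows Proposition~\ref{prop: row word and rect}: the rectification of a semistandard punctured tableau is characterized as the \emph{unique} semistandard (unpunctured) tableau whose row-word is Knuth equivalent to the given one. So it suffices to show that $w_\mathrm{row}(T)$ and $w_\mathrm{row}(\operatorname{revRect}(T))$ lie in the same Knuth equivalence class.

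First I would observe that $\operatorname{revRect}(T)$ is a semistandard skew tableau, hence in particular a semistandard punctured tableau (identifying the skew shape $\lambda'/\nu$ with the punctured partition $(\lambda', D(\nu))$ as in Remark~\ref{rem: skew par is punc par}), so that $\operatorname{Rect}(\operatorname{revRect}(T))$ is defined and is a semistandard tableau. Then I would invoke Proposition~\ref{prop: row word and rev rect}, which gives $w_\mathrm{row}(T) \equiv w_\mathrm{row}(\operatorname{revRect}(T))$, and Proposition~\ref{prop: row word and rect} applied to $\operatorname{revRect}(T)$, which gives $w_\mathrm{row}(\operatorname{revRect}(T)) \equiv w_\mathrm{row}(\operatorname{Rect}(\operatorname{revRect}(T)))$. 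Applying Proposition~\ref{prop: row word and rect} to $T$ itself gives $w_\mathrm{row}(T) \equiv w_\mathrm{row}(\operatorname{Rect}(T))$. Chaining these, $w_\mathrm{row}(\operatorname{Rect}(T)) \equiv w_\mathrm{row}(\operatorname{Rect}(\operatorname{revRect}(T)))$.

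Finally, since both $\operatorname{Rect}(T)$ and $\operatorname{Rect}(\operatorname{revRect}(T))$ are genuine semistandard tableaux (not merely punctured ones) with Knuth-equivalent row-words, Proposition~\ref{prop: row word equiv} forces them to be equal. That completes the argument. The only point requiring care — and the closest thing to an obstacle — is the bookkeeping around the identification of skew tableaux with punctured tableaux: one must check that the notion of $w_\mathrm{row}$ and of $\operatorname{Rect}$ for $\operatorname{revRect}(T)$ viewed as a punctured tableau agrees with what one expects, but this is exactly the content of Remark~\ref{rem: skew par is punc par} together with the conventions set up in Section~\ref{sect: tab}, so no new work is needed; the proof is essentially a two-line chase through Propositions~\ref{prop: row word equiv}, \ref{prop: row word and rect}, and \ref{prop: row word and rev rect}.
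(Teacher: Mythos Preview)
Your proposal is correct and follows essentially the same route as the paper: chain Propositions~\ref{prop: row word and rect} and \ref{prop: row word and rev rect} to obtain $w_\mathrm{row}(\operatorname{Rect}(T)) \equiv w_\mathrm{row}(T) \equiv w_\mathrm{row}(\operatorname{revRect}(T)) \equiv w_\mathrm{row}(\operatorname{Rect}(\operatorname{revRect}(T)))$, then apply Proposition~\ref{prop: row word equiv}. Your extra remarks about the skew/punctured identification are accurate but, as you note, require no additional work.
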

\begin{proof}
  By Propositions \ref{prop: row word and rect} and \ref{prop: row word and rev rect}, we have
  \[
    w_\mathrm{row}(\operatorname{Rect}(T)) \equiv w_\mathrm{row}(T) \equiv w_\mathrm{row}(\operatorname{revRect}(T)) \equiv w_\mathrm{row}(\operatorname{Rect}(\operatorname{revRect}(T))).
  \]
  Then, Proposition \ref{prop: row word equiv} implies the assertion.
\end{proof}

\begin{cor}\label{cor: rect and col-word}
  Assume that the holes of $(\lambda,H)$ lie in the first column:
  \[
    H \subset \{ (r,1) \mid r \geq 1 \}.
  \]
  Then, for each $T \in \mathrm{SST}_n(\lambda,H)$, we have
  \[
    w_\mathrm{row}(\operatorname{Rect}(T)) \equiv w_\mathrm{col}(T).
  \]
\end{cor}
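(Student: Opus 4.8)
The plan is to reduce the claim about $\operatorname{Rect}(T)$ and $w_\mathrm{col}(T)$ to the already-established fact (Proposition \ref{prop: row word and rect}) that $w_\mathrm{row}(T) \equiv w_\mathrm{row}(\operatorname{Rect}(T))$, combined with Proposition \ref{prop: row word equiv}. Since $\operatorname{Rect}(T)$ is a genuine (non-punctured) semistandard tableau, Proposition \ref{prop: row col} gives $w_\mathrm{row}(\operatorname{Rect}(T)) \equiv w_\mathrm{col}(\operatorname{Rect}(T))$, so by Proposition \ref{prop: row word and rect} it suffices to show $w_\mathrm{col}(T) \equiv w_\mathrm{row}(\operatorname{Rect}(T))$, i.e. $w_\mathrm{col}(T) \equiv w_\mathrm{row}(T)$. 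So the real content is: when all holes of $(\lambda,H)$ sit in the first column, the column-word of the punctured tableau $T$ is Knuth-equivalent to its row-word.

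First I would observe that because $H \subset \{(r,1) \mid r \geq 1\}$, the set $D(\lambda, H)$ is obtained from $D(\lambda)$ by deleting some boxes from the first column only; in particular every column $j \geq 2$ of $T$ is a full column of $D(\lambda)$, and the first column consists of the entries $T(i,1)$ for those $i$ with $(i,1) \notin H$. The key structural remark is that $T$ can be viewed as an honest semistandard skew tableau: deleting first-column boxes from the top and interior still leaves the rows weakly increasing and columns strictly increasing, so $(\lambda, H)$, after discarding any holes that create no obstruction, is either a skew shape or decomposes into pieces. More precisely, I would argue that $w_\mathrm{col}(T)$ and $w_\mathrm{row}(T)$ are exactly the column-word and row-word of a semistandard skew tableau $\widetilde T$ whose underlying skew shape has the same boxes as $D(\lambda,H)$ — here one invokes Proposition \ref{prop: punc par wo (rev)slidable holes} or the discussion around Remark \ref{rem: punc par wo (rev)slidable holes} to see that a first-column hole configuration, read from the relevant side, presents $D(\lambda,H)$ as a legitimate skew diagram (possibly after noting which holes are slidable/reversely slidable). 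Then Proposition \ref{prop: row col} applies directly to $\widetilde T$ and yields $w_\mathrm{row}(T) \equiv w_\mathrm{col}(T)$.

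Assembling the chain: $w_\mathrm{col}(T) \equiv w_\mathrm{row}(T) \equiv w_\mathrm{row}(\operatorname{Rect}(T))$, which is the desired conclusion. Alternatively, if one does not want to massage $T$ into a skew tableau, one can prove $w_\mathrm{col}(T)\equiv w_\mathrm{row}(T)$ directly by the same style of elementary-Knuth-move bookkeeping used in Proposition \ref{prop: sl preserve Knu equiv}, pushing each first-column entry past the rows below it; but citing Proposition \ref{prop: row col} after recognizing the skew structure is cleaner.

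The main obstacle I anticipate is the bookkeeping needed to justify that a punctured partition with holes confined to the first column really does present $D(\lambda,H)$ as the Young diagram of a skew partition $\lambda/\nu$ (or a disjoint union of skew shapes) in a way compatible with the row- and column-reading orders. Holes in the first column need not form a prefix $(1,1),(2,1),\dots$ — e.g. $(\lambda,H)$ could have $(1,1)$ a hole but $(2,1)$ not — so one must check that Proposition \ref{prop: row col}, stated for skew tableaux, still applies, perhaps by splitting $D(\lambda,H)$ into the maximal intervals of surviving first-column rows together with the rest. Handling this case analysis carefully, while keeping the reading words literally equal to those of the associated skew tableau(x), is where the proof needs the most care; everything after that is a two-line equivalence chain.
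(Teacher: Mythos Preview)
Your reduction is fine: once you know $w_\mathrm{row}(T)\equiv w_\mathrm{row}(\operatorname{Rect}(T))$ from Proposition~\ref{prop: row word and rect}, everything comes down to identifying the Knuth class of $w_\mathrm{col}(T)$. The gap is in the step where you try to treat $D(\lambda,H)$ itself as a skew shape so that Proposition~\ref{prop: row col} applies. This genuinely fails: take $\lambda=(2,2,2)$ and $H=\{(2,1)\}$. The single hole $(2,1)$ is simultaneously slidable (both $(2,2)$ and $(3,1)$ lie in $D(\lambda,H)$) and reversely slidable (so is $(1,1)$), so neither branch of Proposition~\ref{prop: punc par wo (rev)slidable holes} applies, and $D(\lambda,H)$ is not $D(\lambda'/\mu')$ for any pair of partitions. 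Your fallback of ``splitting into a disjoint union of skew shapes'' does not work either, since the diagram is connected through column $2$; and the sketch of pushing first-column entries past lower rows by hand is not an argument as written.

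The paper closes exactly this gap by first applying the reverse rectification. Because every hole sits in column $1$, reverse sliding from a hole $(r,1)$ can only move upward within column $1$ (the box $(r,0)$ never exists), so $\operatorname{revRect}$ merely pushes all holes to the top of the first column while leaving columns $\geq 2$ untouched and keeping the surviving first-column entries in the same bottom-to-top order. Hence $w_\mathrm{col}(\operatorname{revRect}(T))=w_\mathrm{col}(T)$ \emph{on the nose}, and now $\operatorname{revRect}(T)$ \emph{is} an honest skew tableau, so Proposition~\ref{prop: row col} applies to it. The chain is then
\[
  w_\mathrm{row}(\operatorname{Rect}(T))
  = w_\mathrm{row}(\operatorname{Rect}(\operatorname{revRect}(T)))
  \equiv w_\mathrm{row}(\operatorname{revRect}(T))
  \equiv w_\mathrm{col}(\operatorname{revRect}(T))
  = w_\mathrm{col}(T),
\]
using Corollary~\ref{cor: rect = rect rev rect} and Proposition~\ref{prop: row word and rect} for the first two steps. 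The missing idea in your attempt is precisely this passage through $\operatorname{revRect}(T)$, which manufactures the skew shape you were hoping $T$ already had.
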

\begin{proof}
  By Corollary \ref{cor: rect = rect rev rect} and Proposition \ref{prop: row word and rect}, we have
  \[
    w_\mathrm{row}(\operatorname{Rect}(T)) = w_\mathrm{row}(\operatorname{Rect}(\operatorname{revRect}(T))) \equiv w_\mathrm{row}(\operatorname{revRect}(T)).
  \]
  Then, Proposition \ref{prop: row col} implies that
  \[
    w_\mathrm{row}(\operatorname{revRect}(T)) \equiv w_\mathrm{col}(\operatorname{revRect}(T)).
  \]
  By our assumption, it is clear that $w_\mathrm{col}(T) = w_\mathrm{col}(\operatorname{revRect}(T))$.
  Therefore, the assertion follows.
\end{proof}

\section{Berele row-insertion}\label{sect: ber row-ins}
In this section, we study the Berele row-insertion in detail.
The original definition of the Berele row-insertion involves the Schensted row-insertion and sliding.
We rewrite it in terms of the Knuth equivalence.

Throughout this section, we fix a partition $\nu \in \mathrm{Par}_{\leq n}$.

\subsection{Symplectic tableaux}
\begin{defi}[{\cite[\S 4]{Kin76}}]\label{def: spt}
  A semistandard tableau $T \in \mathrm{SST}_{2n}(\nu)$ is said to be \emph{symplectic} if
  \[
    T(i,1) \geq 2i-1 \quad \text{ for all } i \geq 1.
  \]
  Let $Sp\mathrm{T}_{2n}(\nu)$ denote the set of symplectic tableaux in $\mathrm{SST}_{2n}(\nu)$.
\end{defi}

\begin{lem}\label{lem: symp-ness of row ins}
  Let $T \in Sp\mathrm{T}_{2n}(\nu)$ and $x \in [2n]$.
  Set $S := T \leftarrow x$.
  Let $(x_1,\dots,x_r)$ denote the row-inserting letters for $(T,x)$.
  Then, the following hold:
\begin{enumerate}
  \item\label{item 1 lem: symp-ness of row ins} If $x_i \geq 2i-1$ for all $i \in [r]$, then $S$ is symplectic.
  \item\label{item 2 lem: symp-ness of row ins} If $x_i < 2i-1$ for some $i \in [r]$, then $S$ is not symplectic.
  Moreover, if $s$ denotes the minimum among those $i$, then the following hold:
  \begin{enumerate}
    \item $s \geq 2$,
    \item $x_{s-1} = 2s-3$,
    \item $x_s = 2s-2$,
    \item $x_i = T(i-1,1)$ for all $i \in [s+1,r]$.
  \end{enumerate}
\end{enumerate}
\end{lem}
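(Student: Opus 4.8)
The plan is to analyze the row-inserting letters $(x_1,\dots,x_r)$ together with the row-insertion route $(c_1,\dots,c_r)$, using the explicit description in Proposition \ref{prop: explicit desc row ins}. Recall that $x_1 = x$, $x_i = T(i-1,c_{i-1})$ for $i \in [2,r]$, that $x_1 < \cdots < x_r$, and that $c_1 \geq \cdots \geq c_r$. Also recall from Proposition \ref{prop: explicit desc row ins}\eqref{item: 7 prop: explicit desc row ins} that $S(i,j) = T(i,j)$ except at the positions $(i,c_i)$ for $i \in [r]$, where $S(i,c_i) = x_i$. The symplectic condition only concerns the first column, so I only need to compare $S(i,1)$ with $2i-1$ for each $i$. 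For rows $i$ with $c_i > 1$, we have $S(i,1) = T(i,1) \geq 2i-1$ since $T$ is symplectic, so the symplectic condition can only fail at rows $i$ where $c_i = 1$, and there $S(i,1) = x_i$.

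For part \eqref{item 1 lem: symp-ness of row ins}: if $x_i \geq 2i-1$ for all $i \in [r]$, then combining the two cases above ($c_i > 1$ or $c_i = 1$) shows $S(i,1) \geq 2i-1$ for all $i$, so $S$ is symplectic. For part \eqref{item 2 lem: symp-ness of row ins}: suppose $x_s < 2s-1$ with $s$ minimal. First I would show $c_s = 1$: since $x_1 = x \in [2n]$ and $2\cdot 1 - 1 = 1$, we have $x_1 \geq 1$, so $s \geq 2$; moreover for $i < s$ the hypothesis gives $x_i \geq 2i-1$, while $x_{s-1} = T(s-1,c_{s-1})$. The key point is that $c_s = 1$, which forces $S(s,1) = x_s < 2s-1$ and hence $S$ not symplectic. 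To see $c_s = 1$: if $c_s \geq 2$ then (using $c_{s-1} \geq c_s \geq 2$) the entry $T(s-1,c_{s-1})$ sits in a column $\geq 2$, and one uses column-strictness of $T$ and the relation $x_s = T(s-1,c_{s-1})$ together with the bound on $T(s,1)$; more cleanly, I would argue that $x_s \geq 2s-1$ whenever $c_{s-1} \geq 2$. Indeed, $x_s = T(s-1,c_{s-1})$ and, because $c_{s-1} \geq 2$, row $s-1$ of $T$ has at least $c_{s-1} \geq 2$ boxes, so $T(s-1,1) < T(s-1,2) \leq \cdots \leq T(s-1,c_{s-1}) = x_s$; combined with $T$ symplectic we would like $T(s-1,1) \geq 2s-3$, giving only $x_s \geq 2s-2$ — not quite enough. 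So the sharper input is needed: since the inserting letter arriving at row $s$ came from row $s-1$ and $s$ is the first failure, $x_{s-1} \geq 2(s-1)-1 = 2s-3$; and $x_s > x_{s-1} \geq 2s-3$ forces $x_s \geq 2s-2$, with equality iff $x_{s-1} = 2s-3$ and $x_s = 2s-2$. Since $x_s < 2s-1$ means $x_s \leq 2s-2$, we get $x_s = 2s-2$ and $x_{s-1} = 2s-3$, establishing (b) and (c); then $s \geq 2$ is (a).

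For the remaining claim (d), that $x_i = T(i-1,1)$ for all $i \in [s+1,r]$: this should follow by showing $c_i = 1$ for all $i \in [s,r-1]$, so that each subsequent inserting letter is bumped out of the first column. Since $x_s = 2s-2$ and $T$ symplectic gives $T(s,1) \geq 2s-1 > x_s$, by Proposition \ref{prop: explicit desc row ins}\eqref{item: 2 prop: explicit desc row ins} we get $c_s = \min\{j \mid T(s,j) > x_s\} = 1$; then $x_{s+1} = T(s,c_s) = T(s,1) \geq 2s-1$, and I would need $T(s+1,1) > x_{s+1}$ to conclude $c_{s+1} = 1$ — this uses $T$ symplectic in the form $T(s+1,1) \geq 2(s+1)-1 = 2s+1$ and a bound $x_{s+1} = T(s,1) \leq 2s$, which in turn follows because $T(s,1) < T(s+1,1)$ is automatic but an upper bound on $T(s,1)$ needs an argument; alternatively, note $x_{s+1} = T(s,1)$ and the failure at step $s$ propagates, i.e. $x_{s+1} < 2(s+1)-1$ would be exactly what we want but we must rule out the route leaving column $1$. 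I expect the cleanest route is: once $c_s = 1$, show inductively $c_i = 1$ and $x_{i+1} = T(i,1)$ for $i = s, s+1, \dots, r-1$, where at each step $c_i = 1$ follows from $x_i < 2i-1$ (which we prove by downward-comparison: $x_i = T(i-1,1) < T(i,1)$ and $x_i \leq 2i-2$ because... ) — the main obstacle is precisely pinning down why the route stays in column $1$ after the first failure, i.e. proving the upper bounds $x_i \leq 2i-2$ for $i \geq s$. I would handle this by induction: $x_s = 2s-2$ is known; assuming $x_i = T(i-1,1)$ and $x_i \leq 2i-2$, symplecticity gives $T(i,1) \geq 2i-1 > x_i$ so $c_i = 1$, hence $x_{i+1} = T(i,1)$; and $T(i,1) \leq 2i$ because $T(i,1) < T(i+1,1)$ would give no bound, so instead use that $c_{i-1} = 1$ forces... — at this point the genuinely delicate bookkeeping is unavoidable, and that downward propagation of the bound $x_{i+1} \leq 2i$ (equivalently $T(i,1) \leq 2i$) is the step I expect to require the most care, likely via the observation that $x_i$ and $x_{i+1}$ differ by exactly the gap climbed in column $1$ and that this gap cannot exceed $2$ given the symplectic lower bounds force near-equality.
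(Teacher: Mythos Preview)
Your treatment of part \eqref{item 1 lem: symp-ness of row ins} and of items (a), (b), (c) in part \eqref{item 2 lem: symp-ness of row ins} is correct and coincides with the paper's argument: use $x_{s-1} \geq 2s-3$ (minimality of $s$) and $x_s > x_{s-1}$ together with $x_s < 2s-1$ to pin down $x_{s-1} = 2s-3$, $x_s = 2s-2$; then $T(s,1) \geq 2s-1 > x_s$ gives $c_s = 1$, so $S(s,1) = x_s < 2s-1$ and $S$ is not symplectic.

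The gap is in item (d). Your plan there is to propagate the bound $x_i \leq 2i-2$ for $i \geq s$ and use it to force $c_i = 1$. This bound is \emph{false} in general: once $c_s = 1$ we have $x_{s+1} = T(s,1)$, and nothing prevents $T(s,1)$ from being much larger than $2s$. For a concrete instance, take $\nu = (1,1,1)$, $T$ the single-column tableau with entries $2,3,10$ (symplectic), and $x = 1$: the row-inserting letters are $(1,2,3,10)$, so $s = 2$ and $x_4 = 10 > 2\cdot 4 - 2$. The ``failure propagates'' variant you mention (that $x_i < 2i-1$ should persist for $i > s$) fails for the same example.

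The fix is already in your opening paragraph: you recalled from Proposition~\ref{prop: explicit desc row ins}\eqref{item: 3 prop: explicit desc row ins} that $c_1 \geq c_2 \geq \cdots \geq c_r$. Since $c_s = 1$ and each $c_i \geq 1$, this forces $c_i = 1$ for every $i \in [s,r]$, and then Proposition~\ref{prop: explicit desc row ins}\eqref{item: 4 prop: explicit desc row ins} gives $x_i = T(i-1,c_{i-1}) = T(i-1,1)$ for all $i \in [s+1,r]$. This is exactly the paper's argument. (Equivalently, one can run a clean induction: if $c_i = 1$ then $x_{i+1} = T(i,1) < T(i+1,1)$ by column-strictness, hence $c_{i+1} = 1$; no bound on $x_{i+1}$ of the form $x_{i+1} \leq 2i$ is needed.) So the ``delicate bookkeeping'' you anticipated is not there.
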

\begin{proof}
  Let us prove the first assertion.
  By Proposition \ref{prop: explicit desc row ins} \eqref{item: 7 prop: explicit desc row ins}, we have
  \[
    S(i,1) \in \{ x_i, T(i,1) \} \quad \text{ for all } i \in [r],
  \]
  and
  \[
    S(i,1) = T(i,1) \quad \text{ for all } i > r.
  \]
  Since $T$ is symplectic, we have $T(i,1) \geq 2i-1$ for all $i \geq 1$.
  Hence, we see that
  \[
    S(i,1) \geq 2i-1 \quad \text{ for all } i \geq 1.
  \]
  This implies that $S$ is symplectic.

  Let us prove the second assertion.
  Let $(c_1,\dots,c_r)$ denote the row-insertion route for $(T,x)$.
  Since $x_s \geq 1 = 2 \cdot 1 - 1$, we obtain $s \geq 2$.
  By the definition of $s$ and Proposition \ref{prop: explicit desc row ins} \eqref{item: 5 prop: explicit desc row ins}, we obtain
  \[
    2s-1 > x_s > x_{s-1} \geq 2(s-1)-1 = 2s-3.
  \]
  This implies that
  \[
    x_{s-1} = 2s-3, \quad x_s = 2s-2.
  \]
  By Proposition \ref{prop: explicit desc row ins} \eqref{item: 2 prop: explicit desc row ins}, we have  
  \[
    c_s = \min\{ j \geq 1 \mid T(s,j) > x_s \}.
  \]
  Since $T$ is symplectic, it holds that $T(s,1) \geq 2s-1 > x_s$.
  Hence, we obtain $c_s = 1$.
  Then, Proposition \ref{prop: explicit desc row ins} \eqref{item: 3 prop: explicit desc row ins} implies that $c_i = 1$ for all $i \in [s,r]$.
  By Proposition \ref{prop: explicit desc row ins} \eqref{item: 4 prop: explicit desc row ins}, we obtain
  \[
    x_i = T(i-1,c_{i-1}) = T(i-1,1) \quad \text{ for all } i \in [s+1,r].
  \]
  Thus, we complete the proof.
\end{proof}

\subsection{Berele row-insertion}
\begin{defi}[{\cite[\S 2]{Ber86}}]\label{def: ber row ins}
  The \emph{Berele row-insertion} is the algorithm which takes a pair $(T,x) \in Sp\mathrm{T}_{2n}(\nu) \times [2n]$ as input, and outputs a new symplectic tableau $T \xleftarrow{\mathrm{B}} x$ as follows.
  \begin{enumerate}
    \item Initialize $r := 1$, $T_r := T$, $x_r := x$.
    \item\label{step 2, ber row ins} Set $c_r := \min\{ j \geq 1 \mid T(r,j) > x_r \}$.
    \item \begin{enumerate}
      \item If $c_r = \nu_r+1$, then return $\operatorname{add}(T_r,r,x_r)$.
      \item If $c_r \leq \nu_r$, $x_r = 2r-1$, and $T(r,c_r) = 2r$, then set $S$ to be the semistandard punctured tableau of shape $(\nu, \{(r,1)\})$ defined by
      \[
        S(i,j) := \begin{cases}
          2r-1 & \text{ if } (i,j) = (r,c_r), \\
          T_r(i,j) & \text{ otherwise},
        \end{cases}
      \]
      for each $(i,j) \in D(\nu,\{(r,1)\})$.
      Then, return $\operatorname{Rect}(S)$.
      \item Otherwise, set $T_{r+1}$ to be the symplectic tableau of shape $\lambda$ obtained from $T_r$ by replacing the $(r,c_r)$-entry with $x_r$:
      \[
        T_{r+1}(i,j) := \begin{cases}
          x_r & \text{ if } (i,j) = (r,c_r), \\
          T_r(i,j) & \text{ otherwise},
        \end{cases}
      \]
      for each $(i,j) \in D(\nu)$.
      Also, set $x_{r+1} := T_r(r,c_r)$, increment $r$, and then go back to step \eqref{step 2, ber row ins}.
    \end{enumerate}
  \end{enumerate}
\end{defi}

\begin{thm}[\textit{cf}.\,{\cite[Theorem 2]{Ber86}}]\label{thm: ber row ins}
  Let $\nu \in \mathrm{Par}_{\leq n}$.
  The Berele row-insertion gives rise to a bijection
  \[
    Sp\mathrm{T}_{2n}(\nu) \times [2n] \to \bigsqcup_{\substack{\xi \in \mathrm{Par}_{\leq n} \\ \xi \supset \nu,\ |\xi/\nu| = 1}} Sp\mathrm{T}_{2n}(\xi) \sqcup \bigsqcup_{\substack{\xi \in \mathrm{Par}_{\leq n} \\ \xi \subset \nu, \ |\nu/\xi| = 1}} Sp\mathrm{T}_{2n}(\xi).
  \]
\end{thm}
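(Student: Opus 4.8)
The plan is to establish that the Berele row-insertion is a well-defined map into the stated target, and then construct an explicit two-sided inverse, so that bijectivity follows. I would split the argument along the three cases of Definition \ref{def: ber row ins}. The first case (termination with $c_r = \nu_r+1$) is the pure Schensted situation: here $x_i \geq 2i-1$ for all $i \in [r]$ by Lemma \ref{lem: symp-ness of row ins}\eqref{item 1 lem: symp-ness of row ins}, so $\operatorname{add}(T_r,r,x_r)$ is symplectic of shape $\operatorname{add}(\nu,r) \supset \nu$ with $|\operatorname{add}(\nu,r)/\nu| = 1$; this lands in the first disjoint union. The obstruction-triggering case ($c_r \leq \nu_r$, $x_r = 2r-1$, $T(r,c_r) = 2r$) is exactly the situation where, by Lemma \ref{lem: symp-ness of row ins}\eqref{item 2 lem: symp-ness of row ins}, the naive continuation would produce a non-symplectic tableau — the row-inserting letter at the next step would be $2r$ in row $r+1$, violating $x_{r+1} \geq 2(r+1)-1 = 2r+1$. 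Berele's fix replaces the bumped entry by $2r-1$ and punches a hole at $(r,1)$, then rectifies. I would need to check that $S$ as defined is a genuine semistandard punctured tableau of shape $(\nu,\{(r,1)\})$ — the only thing to verify is column-strictness at $(r,c_r)$ and $(r,c_r+1)$, which holds since the old entry was $2r \geq 2r-1$ and $T(r-1,c_r) < x_r = 2r-1$ by the $c$-route inequalities — and that $\operatorname{Rect}(S)$ has shape $\operatorname{rm}(\nu,s)$ for the appropriate removable row $s$; the single hole guarantees the rectified shape is $\nu$ minus one box.

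The heart of the matter is the third (recursive) case: I would prove by induction on $r$ that the algorithm terminates, that the intermediate $T_r$ remain symplectic of shape $\nu$, and that exactly one of the three cases eventually fires. Termination follows because $c_1 \geq c_2 \geq \cdots$ is weakly decreasing (as in Proposition \ref{prop: explicit desc row ins}\eqref{item: 3 prop: explicit desc row ins}) and bounded below by $1$, and once $c_r = 1$ the entry $x_{r+1} = T_r(r,1)$ feeds into a row whose first entry is $\geq 2(r+1)-1$, forcing $c_{r+1} = 1$ as well, so the process walks down the first column until it either adds a box, empties past the tableau (case (a)), or hits the $(x_r,T(r,c_r)) = (2r-1,2r)$ pattern (case (b)). The symplectic-ness of each $T_r$ is immediate since we only ever replace an entry by a smaller one in the same position except possibly in the first column, and there the bound $2i-1$ is preserved exactly because case (b) intercepts the one dangerous configuration.

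For bijectivity I would describe the inverse maps explicitly. Given $U \in Sp\mathrm{T}_{2n}(\xi)$ with $\xi \supset \nu$, $|\xi/\nu|=1$: let $r$ be the unique row with $\xi_r = \nu_r+1$; apply the ordinary reverse row-insertion (reverse bumping) starting from the box $(r,\nu_r+1)$ to recover $(T,x)$ — the resulting $T$ is automatically symplectic because $U$ was, and one checks the reverse route never violates the first-column bound. Given $U \in Sp\mathrm{T}_{2n}(\xi)$ with $\xi \subset \nu$, $|\nu/\xi|=1$, say $\nu = \operatorname{add}(\xi,s)$: this must have come from case (b), so I would run the reverse rectification $\operatorname{revRect}$ to move the missing box into position $(s,1)$, producing a punctured tableau $S$ of shape $(\nu,\{(s,1)\})$; then read off $c_s$ as the position of the value $2s-1$ that should be $2s$, undo the case-(b) substitution to reconstruct $T_s$ (with $x_s = 2s-1$), and reverse-bump up through rows $s-1, s-2, \ldots, 1$ to recover $(T,x)$. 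The main obstacle is verifying that these reverse procedures are well-defined and genuinely inverse — in particular that the value "$2s-1$ where $2s$ should be" is uniquely locatable after reverse rectification, that reverse-bumping preserves symplecticity at every stage, and that the three cases of the forward algorithm partition the domain so that the two inverse recipes have disjoint images covering everything. I would handle this by tracking the row-word through $\operatorname{revRect}$ via Propositions \ref{prop: row word and rev rect} and \ref{prop: row word equiv}, which pin down $S$ uniquely from its rectification, and by a careful case analysis on whether $x < 2(\cdot)-1$ ever occurs along the route, mirroring Lemma \ref{lem: symp-ness of row ins}. A cardinality check against Berele's original count of $\dim V(\nu) \otimes \mathbb{C}^{2n}$ provides a useful consistency safeguard but is not needed once the explicit inverse is in hand.
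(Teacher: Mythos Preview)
The paper does not supply its own proof of this theorem; it is quoted from \cite[Theorem 2]{Ber86} (with the mild extension to an arbitrary starting shape $\nu$). Your plan---verify well-definedness case by case and then exhibit an explicit two-sided inverse---is precisely Berele's original strategy, so in effect you are reconstructing the cited argument rather than offering an alternative.

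Your inverse in the shrinking case has a genuine gap, though. Writing $\nu = \operatorname{add}(\xi,s)$, you propose to reverse-rectify the hole to $(s,1)$ and undo the case-(b) substitution there with $x_s = 2s-1$. But the $s$ you have defined is the \emph{terminal} row of the forward sliding (the $k$ of Lemma~\ref{lem: ber row ins route}\eqref{item 2 lem: ber row ins route}), whereas the hole in the forward algorithm was punched at the \emph{obstruction} row $r$ where case~(b) fired (equivalently $r = s'-1$ with $s'$ the minimal index satisfying $x_{s'} < 2s'-1$). In general $r < s$---in the paper's own example one has $r=2$ and $s=4$---and the ``$2r-1$ for $2r$'' swap and the subsequent reverse bumping live in row $r$, not row $s$. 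Worse, $\operatorname{revSl}$ (equivalently $\operatorname{revRect}$, since there is a single hole) applied to the hole at $(s,\nu_s)$ in the straight shape $\nu$ does not halt at $(r,1)$: for a single hole in a partition shape the only non-reversely-slidable cell is $(1,1)$, so reverse sliding retraces the forward path back to $(r,1)$ and then continues past it. Since $r$ is not determined by the pair $(\nu,\xi)$ alone, you need a stopping criterion extracted from the tableau itself---e.g.\ halt at the first $(r,1)$ for which the entry immediately to the right equals $2r-1$ and restoring $2r$ at the appropriate column together with $2r-1$ at $(r,1)$ yields a symplectic tableau. Proving that such a criterion picks out a unique $r$ and that the resulting map genuinely inverts the forward algorithm is exactly the delicate core of Berele's proof, and your sketch has not yet engaged with it.
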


\begin{lem}\label{lem: ber row ins route}
  Let $T \in Sp\mathrm{T}_{2n}(\nu)$ and $x \in [2n]$.
  Let $(x_1,\dots,x_r)$ denote the row-inserting letters for $(T,x)$.
  Set $S := T \xleftarrow{\mathrm{B}} x$ and $\xi := \operatorname{sh}(S)$.
  \begin{enumerate}
    \item\label{item 1 lem: ber row ins route} Suppose that $x_i \geq 2i-1$ for all $i \in [r]$.
    Then, we have
    \begin{enumerate}
      \item $S = T \leftarrow x$,
      \item $\xi = \operatorname{add}(\nu,r)$.
    \end{enumerate}
    \item\label{item 2 lem: ber row ins route} Suppose that $x_i < 2i-1$ for some $i \in [r]$.
    Let $s$ denote the minimum among those $i$, and set $T'$ to be the punctured tableau of shape $(\nu, \{(s-1,1)\})$ defined by
    \[
      T'(i,j) := \begin{cases}
        x_i & \text{ if } i \in [s-1] \text{ and } j = c_i, \\
        T(i,j) & \text{ otherwise},
      \end{cases}
    \]
    for each $(i,j) \in D(\nu, \{(s-1,1)\})$.
    Then, we have
    \begin{enumerate}
      \item $S = \operatorname{Rect}(T')$,
      \item $\xi = \operatorname{rm}(\nu,k)$, where $k$ denotes the terminal row of the sliding for $(T';s-1,1)$.
    \end{enumerate}
  \end{enumerate}
\end{lem}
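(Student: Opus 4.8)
The plan is to run the Berele row-insertion (Definition \ref{def: ber row ins}) and the ordinary row-insertion (Definition \ref{def: row-ins}) side by side on the same input $(T,x)$. Both algorithms start from $r := 1$, $T_r := T$, $x_r := x$ and, at the head of the loop, compute $c_r = \min\{ j \geq 1 \mid T(r,j) > x_r \}$ from identical data; moreover branch (c) of the Berele algorithm is verbatim the updating rule of the ordinary one. Hence the two runs generate the same triples $(r,T_r,x_r)$ and the same numbers $c_r$ until the Berele run first invokes branch (a) or branch (b), and the whole task reduces to locating, in each of the two situations, the step at which a Berele-specific branch is taken and identifying the resulting output.

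For the first assertion, suppose $x_i \geq 2i-1$ for all $i \in [r]$. I would first show that branch (b) is never invoked: if it were, at some step $i$, then $c_i \leq \nu_i$, so the ordinary insertion does not terminate at step $i$ and thus $i < r$; but branch (b) also requires $T(i,c_i) = 2i$, while $T(i,c_i) = x_{i+1}$ by Proposition \ref{prop: explicit desc row ins} \eqref{item: 4 prop: explicit desc row ins}, giving $x_{i+1} = 2i < 2(i+1)-1$ with $i+1 \leq r$, which contradicts the hypothesis. Therefore the two runs coincide throughout; since branch (a) of the Berele algorithm is triggered by exactly the condition $c_r = \nu_r + 1$ that ends the ordinary insertion, the Berele run returns $T \leftarrow x$, which is (a). Part (b) then follows from $\operatorname{sh}(T \leftarrow x) = \operatorname{add}(\nu,r)$, i.e. Proposition \ref{prop: explicit desc row ins} \eqref{item: 6 prop: explicit desc row ins}.

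For the second assertion, let $s$ be the least index in $[r]$ with $x_s < 2s-1$; by Lemma \ref{lem: symp-ness of row ins} \eqref{item 2 lem: symp-ness of row ins} we have $s \geq 2$, $x_{s-1} = 2s-3$, $x_s = 2s-2$, and in particular $s-1 \in [1,r-1]$, so $c_{s-1} \leq \nu_{s-1}$. I would check that no Berele-specific branch is taken before step $s-1$ (branch (a) there would need $c_i = \nu_i+1$, impossible for $i < r$; branch (b) there would force $x_{i+1} = 2i < 2(i+1)-1$ with $i+1 \leq s-1$, against the minimality of $s$), so that the running tableau at step $s-1$ is $T_{s-1}$, which equals $T$ except that $(i,c_i)$ carries $x_i$ for $i \in [s-2]$. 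At step $s-1$ branch (a) fails, whereas $x_{s-1} = 2(s-1)-1$ and, by Proposition \ref{prop: explicit desc row ins} \eqref{item: 4 prop: explicit desc row ins}, $T(s-1,c_{s-1}) = x_s = 2s-2 = 2(s-1)$, so branch (b) fires. It produces the punctured tableau of shape $(\nu,\{(s-1,1)\})$ obtained from $T_{s-1}$ by writing $2s-3 = x_{s-1}$ in box $(s-1,c_{s-1})$ and puncturing $(s-1,1)$, which is precisely the tableau $T'$ of the statement (the clauses concerning $(s-1,c_{s-1})$ being vacuous when $c_{s-1} = 1$), and returns $\operatorname{Rect}(T')$; this gives (a). For (b), note that $(\nu,\{(s-1,1)\})$ has the single hole $(s-1,1)$, so the first step of the rectification (Definition \ref{defi: rect}) is the slide $\operatorname{Sl}(T';s-1,1)$, whose shape is $(\nu,\{(k,\gamma_k)\})$ with $k$ the terminal row (Definition \ref{def: sl route}) and whose hole $(k,\gamma_k)$ is not slidable by Proposition \ref{prop: sl route}; hence the rectification halts and $S = \operatorname{Rect}(T') = \operatorname{Sl}(T';s-1,1)$. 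By Proposition \ref{prop: punc par wo (rev)slidable holes} \eqref{item: 1 lem punc par wo (rev)slidable holes} there is a partition $\mu \subseteq \nu$ with $D(\mu) = D(\nu) \setminus \{(k,\gamma_k)\}$, and removing one box from $\nu$ while remaining a partition forces $\gamma_k = \nu_k$, $\nu_k > \nu_{k+1}$, and $\mu = \operatorname{rm}(\nu,k)$, so $\xi = \operatorname{rm}(\nu,k)$.

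I expect the only delicate point to be the bookkeeping in the third paragraph: matching the intermediate tableau $T_{s-1}$ and the branch-(b) output with the tableau $T'$ of the statement, and handling the degenerate cases $c_{s-1} = 1$ (where box $(s-1,c_{s-1})$ coincides with the puncture, so the relevant clauses are vacuous) and $s-1 = \ell(\nu)$ with $\nu_{s-1} = 1$ (where the puncture is immediately non-slidable, hence $k = s-1$). Everything else is a direct comparison of the two algorithms built on Proposition \ref{prop: explicit desc row ins}, Lemma \ref{lem: symp-ness of row ins}, Proposition \ref{prop: sl route}, and Proposition \ref{prop: punc par wo (rev)slidable holes}, with no essential difficulty.
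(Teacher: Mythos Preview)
Your proposal is correct and follows the same approach as the paper, which records only the one-line justification ``follows from Definition \ref{def: ber row ins}, Proposition \ref{prop: explicit desc row ins} \eqref{item: 6 prop: explicit desc row ins}, and Lemma \ref{lem: symp-ness of row ins}''. You have simply written out in full the step-by-step comparison of the two algorithms that this sentence leaves implicit, including the extra references to Proposition \ref{prop: sl route} and Proposition \ref{prop: punc par wo (rev)slidable holes} needed to pin down the shape in part (2)(b).
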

\begin{proof}
  The assertion follows from Definition \ref{def: ber row ins}, Proposition \ref{prop: explicit desc row ins} \eqref{item: 6 prop: explicit desc row ins}, and Lemma \ref{lem: symp-ness of row ins}.
\end{proof}

Let $\mathcal{R} := \mathbb{Z}_{> 0} \sqcup \{ \bar{r} \mid r \in \mathbb{Z}_{> 0} \}$ denote the totally ordered set such that
\[
  1 < 2 < \cdots < \bar{2} < \bar{1}.
\]
An element of $\mathcal{R}$ is said to be \emph{unbarred} if it belongs to $\mathbb{Z}_{> 0}$, and \emph{barred} otherwise.

\begin{defi}\label{def: ber row ins route}
  Let us keep the notation in Lemma \ref{lem: ber row ins route}.
  The \emph{terminal row} of the Berele row-insertion for $(T,x)$ is $r \in \mathcal{R}$ if $x_i \geq 2i-1$ for all $i \in [r]$; otherwise $\bar{k} \in \mathcal{R}$.
\end{defi}

\begin{defi}
  Let us keep the notation in Lemma \ref{lem: ber row ins route}.
  Suppose that $x_i < 2i-1$ for some $i \in [r]$, and let $(\gamma_{s-2},\gamma_{s-1},\dots,\gamma_k)$ denote the sliding route for $(T';s-1,1)$.
  We call $(c_1,\dots,c_{s-1};\gamma_{s-1},\dots,\gamma_{k})$ the Berele row-insertion route for $(T,x)$.
\end{defi}

\begin{ex}
  Let
  \[
    T := \begin{ytableau}
      1 & 1 & 2 & 3 & 3 \\
      3 & 3 & 4 & 8 \\
      6 & 6 & 8 \\
      8 & 8 & 9 \\
      9
    \end{ytableau}.
  \]
  Then,
  \[
    T \xleftarrow{\mathrm{B}} 2 = \begin{ytableau}
      1 & 1 & 2 & *(lightgray) 2 & 3 \\
      3 & 3 & *(lightgray) 8 & 8 \\
      6 & 6 & *(lightgray) 9 \\
      8 & 8 & *(lightgray) \bigcirc \\
      9
    \end{ytableau},
  \]
  where the shaded boxes represent the Berele row-insertion route $(4,3;3,3,3)$.
  The terminal row is $\bar{4}$.
\end{ex}

\begin{lem}[Berele row-insertion lemma]\label{lem: ber row ins}
  Let $T \in Sp\mathrm{T}_{2n}(\nu)$, $x,x' \in [2n]$, and set $T' := T \xleftarrow{\mathrm{B}} x$, $\nu' := \operatorname{sh}(T')$, $T'' := T' \xleftarrow{\mathrm{B}} x''$, $\nu'' := \operatorname{sh}(T'')$.
  Let $r,r'$ denote the terminal rows of the Berele row-insertion for $(T,x)$ and $(T',x')$, respectively.
  \begin{enumerate}
    \item\label{item: row, lem: ber row ins} If $x \leq x'$, then we have $r \geq r'$
    \item\label{item: col, lem: ber row ins} If $x > x'$, then we have $r < r'$
  \end{enumerate}
\end{lem}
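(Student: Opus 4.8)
The plan is to reduce the Berele row-insertion lemma to the classical row-insertion lemma for the Schensted algorithm (the monotonicity of insertion routes / bumping paths), applying the classical statement separately to the ordinary insertion phase and to the sliding phase, and then checking that the ``barred vs.\ unbarred'' dichotomy behaves correctly at the interface. Concretely, using Lemma \ref{lem: ber row ins route}, each Berele row-insertion factors as: first perform the ordinary Schensted row-insertion $T \leftarrow x$, recording the row-inserting letters $(x_1,\dots,x_r)$; if all $x_i \ge 2i-1$ the terminal row is the honest insertion-terminal row (unbarred case), and if some $x_i < 2i-1$ we instead rectify a punctured tableau $T'$ with a single hole in column $1$, and the terminal row is $\overline{k}$ where $k$ is the terminal row of that sliding. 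So I would split into cases according to which of $(T,x)$, $(T',x')$ are barred or unbarred.

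For the \emph{unbarred--unbarred} case, both terminal rows are ordinary Schensted insertion depths, so the statement is exactly the classical row-insertion lemma (e.g.\ \cite[\S 1.1]{Ful97}); I would just cite it, after noting via Lemma \ref{lem: symp-ness of row ins}\eqref{item 1 lem: symp-ness of row ins} that the intermediate tableau $T'$ is genuinely $T \leftarrow x$ so that the second insertion really is the Schensted insertion into $T \leftarrow x$. For the \emph{barred--barred} case, the second Berele insertion begins by Schensted-inserting $x'$ into the symplectic tableau $T' = \operatorname{Rect}(T'')$; here I would use Proposition \ref{prop: explicit desc row ins} and Lemma \ref{lem: symp-ness of row ins}\eqref{item 2 lem: symp-ness of row ins} to locate the row $s'-1$ where the new hole is created, then compare the two sliding routes. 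The key point is that when $x \le x'$ (resp.\ $x > x'$), the bumping path of $x$ in $T$ lies weakly left of (resp.\ strictly left of or... ) that of $x'$ in the updated tableau, which forces the hole-creation rows and then the sliding terminal rows to compare the right way; I would phrase this using the column-weakly/row-strictly comparison of the route sequences $(c_i)$ and $(\gamma_i)$ guaranteed by Propositions \ref{prop: explicit desc row ins} and \ref{prop: sl route}, together with Lemma \ref{lem: rows near latest sl hole} to control how a single sliding moves between rows.

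The genuinely delicate cases are the \emph{mixed} ones, where exactly one of the two insertions is barred. Here the two ``terminal rows'' live in different halves of $\mathcal{R}$, and the inequality $r \ge r'$ or $r < r'$ must be read in the total order $1 < 2 < \cdots < \overline 2 < \overline 1$, i.e.\ \emph{every} unbarred value exceeds \emph{every} barred value. So in the case $x \le x'$ I must show that if $(T,x)$ is barred then $(T',x')$ is also barred (since an unbarred $r'$ would give $r' > r$, violating $r \ge r'$); and symmetrically in the case $x > x'$, if $(T,x)$ is unbarred then $(T',x')$ must also be unbarred. These are implications about \emph{when} a letter gets ``killed'' (bumped into column $1$ below its allowed diagonal), and I would prove them by tracking the row-inserting letters: if inserting the smaller letter $x$ already produces some $x_i < 2i-1$, then the insertion path of the (weakly larger, after the first insertion) letter $x'$ is pushed at least as far, so it too hits the forbidden diagonal; the comparison of insertion paths is again the classical row-insertion lemma applied to the common prefix of the two insertions. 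I expect this monotonicity-of-survival claim to be the main obstacle, because it requires relating the diagonal condition $x_i \ge 2i-1$ across two different tableaux and two different insertion paths simultaneously; the cleanest route is probably to run both insertions in parallel row by row, maintaining the invariant that $c_i \le c_i'$ and $x_i \le x_i'$ (or the appropriate strict versions) for as long as both survive, and then observe that the first to violate the diagonal condition forces the other to do so in the same or a later row. Once that is in hand, the remaining comparison of $k$ versus $k'$ (or $k$ versus an unbarred depth, which is automatic) follows from the sliding-route monotonicity established for the barred--barred case.
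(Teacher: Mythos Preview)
Your case split (unbarred/unbarred, barred/barred, mixed) matches the paper's, but you have the total order on $\mathcal{R}$ backwards: in $1 < 2 < \cdots < \bar 2 < \bar 1$ every \emph{barred} element is larger than every unbarred one, not the reverse. This flips your identification of which mixed cases are automatic and which need work. For part~\eqref{item: col, lem: ber row ins} (the case $x>x'$), if $r$ is unbarred and $r'$ is barred then $r<r'$ is immediate, so what you actually need is that \emph{barred first forces barred second}; for part~\eqref{item: row, lem: ber row ins} you would need that \emph{unbarred first forces unbarred second}. The implications you propose to prove are the wrong ones, and they are false: take $n\ge 2$, $T=\begin{ytableau}1&2\\3\end{ytableau}$, $x=1$, $x'=2$. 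The first Berele insertion is barred (the $2$ bumped from row~$1$ violates $x_2\ge 3$), giving $T'=\begin{ytableau}1\\3\end{ytableau}$ with $r=\bar 1$; the second insertion simply appends $2$ to row~$1$, unbarred with $r'=1$. Thus ``barred first implies barred second'' fails for $x\le x'$, yet $r=\bar 1>1=r'$ as the lemma predicts. Your ``monotonicity-of-survival'' heuristic (``if the smaller letter hits the forbidden diagonal, so does the larger one'') ignores that the second insertion runs in the \emph{already-modified} tableau $T'$, which has lost a box.

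The paper only proves part~\eqref{item: col, lem: ber row ins}, citing Sundaram for part~\eqref{item: row, lem: ber row ins}. Its argument for $x>x'$ runs in the correct direction: assuming the first insertion is barred, it tracks the bumping path of $x'$ row by row, shows $x''_{s-1}<2(s-1)-1$ so the second insertion is barred with $s'<s$, and then compares the two sliding routes $(\gamma_i)$ and $(\gamma'_i)$ directly to conclude $k>k'$, i.e.\ $\bar k<\bar{k'}$. The unbarred/unbarred sub-case is the classical bumping lemma, and the remaining mixed case (unbarred first, barred second) is automatic by the order on $\mathcal{R}$. Your barred/barred sketch is along the right lines, but you will need to redo the mixed-case analysis with the correct ordering before it can work.
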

\begin{proof}
  The first assertion has been proved in \cite[Lemma 10.4]{Sun86}.

  Let us prove the second assertion.
  Suppose that $\nu \supset \nu'$.
  Let $(c_1,\dots,c_{s-1};\gamma_{s-1},\dots,\gamma_k)$ denote the Berele row-insertion route for $(T,x)$ and $(x_1,\dots,x_s,\dots)$ the row-inserting letters for $(T,x)$.
  Then, we have
  \[
    T'(i,j) = \begin{cases}
      x_i & \text{ if } i \in [s-2] \text{ and } j = c_i, \\
      2s-3 & \text{ if } i = s-1 \text{ and } j \in [c_{s-1}-1], \\
      T(s-1,j+1) & \text{ if } i = s-1 \text{ and } j \in [c_{s-1},\gamma_{s-1}-1], \\
      T(i,j+1) & \text{ if } i \in [s,k] \text{ and } j \in [\gamma_{i-1},\gamma_i-1], \\
      T(i+1,j) & \text{ if } i \in [s-1,k-1] \text{ and } j = \gamma_i, \\
      T(i,j) & \text{ otherwise},
    \end{cases}
  \]
  for all $(i,j) \in D(\nu')$.

  Set $S := T' \leftarrow x'$, and let $(c''_1,\dots,c''_{r''})$ and $(x''_1,\dots,x''_{r''})$ denote the row-insertion route and the row-inserting letters for $(T',x')$, respectively.
  Then, we have
  \[
    x''_1 = x' < x = x_1
  \]
  and
  \[
    c''_1 = \min\{ j \geq 1 \mid T'(1,j) > x''_1 \}.
  \]
  Since $T'(1,c_1) = x_1 > x''_1$, it holds that $c''_1 \leq c_1$.
  Proceeding in this way, one can deduce that
  \begin{align}\label{ineq: x''}
    x''_{s-1} < x_{s-1} = 2s-3 = 2(s-1)-1.
  \end{align}
  By Lemma \ref{lem: ber row ins route} \eqref{item 2 lem: ber row ins route}, this implies that $\nu' \supset \nu''$.
  Let $(c'_1,\dots,c'_{s'-1};\gamma'_{s'-1},\dots,\gamma'_{k'})$ denote the Berele row-insertion route for $(T',x')$.
  By inequality \eqref{ineq: x''}, we see that
  \[
    s' < s.
  \]

  Let us show that $\gamma_i \leq \gamma'_i$ for all $i \in [s-1,k] \cap [s'-1,k']$.
  Assume contrary that $\gamma_i > \gamma'_i$ for some $i$.
  We may further assume that this $i$ is the minimum among those.
  Since $s' < s$, we must have $i \geq s'$.
  By the minimality of $i$, it holds that $\gamma_{i-1} \leq \gamma'_{i-1}$, where we set $\gamma_{s-2} := 1$.
  Hence, by Proposition \ref{prop: sl route} \eqref{item 1, prop: sl route}, we have
  \[
    \gamma_{i-1} \leq \gamma'_{i-1} \leq \gamma'_i < \gamma_i.
  \]
  Then, we obtain
  \begin{align*}
    &T''(i-1,\gamma'_{i-1}) = T'(i,\gamma'_{i-1}) = T(i,\gamma'_{i-1}+1), \\
    &T''(i-1,\gamma'_{i-1}+1) = T'(i-1,\gamma'_{i-1}+1) = T(i-1,\gamma'_{i-1}+1).
  \end{align*}
  However, these contradict that $T''$ and $T$ are semistandard:
  \[
    T''(i-1,\gamma'_{i-1}) \leq T''(i-1, \gamma'_{i-1}), \quad T(i,\gamma'_{i-1}+1) > T(i-1,\gamma'_{i-1}+1).
  \]
  
  Next, let us prove that $k > k'$.
  Assume contrary that $k \leq k'$.
  We have proved that $\gamma_k \leq \gamma'_k$.
  On the other hand, by Lemma \ref{lem: ber row ins route} \eqref{item 2 lem: ber row ins route}, we must have $\gamma_k = \nu_k = \nu'_k+1$.
  However, Proposition \ref{prop: sl route} \eqref{item 2, prop: sl route} implies that $\gamma'_k \leq \nu'_k$.
  Therefore, we obtain a contradiction.

  So far, we have proved the following.
  If $\nu \supset \nu'$, then we have $\nu' \supset \nu''$.
  Moreover, it holds that $r = \bar{k} < \bar{k'} = r'$.

  Now, assume that $\nu' \subset \nu''$.
  In this case, we must have $\nu \subset \nu'$, for otherwise what we just stated yields $\nu' \supset \nu''$.
  By Lemma \ref{lem: ber row ins route} \eqref{item 1 lem: ber row ins route}, we obtain
  \[
    T' = T \leftarrow x, \quad T'' = T' \leftarrow x'.
  \]
  Then, the assertion follows from \cite[\S 1.1 Row Bumping Lemma (2)]{Ful97}.

  It remains to consider the case when $\nu \subset \nu' \supset \nu''$.
  By Lemma \ref{lem: ber row ins route}, we see that $r$ is unbarred and $r'$ is barred.
  Hence, we obtain $r < r'$.
  Thus, we complete the proof.
\end{proof}

\section{Row-insertion of type $A\mathrm{II}$}\label{sect: row-ins aii}
In this section, we briefly recall results in \cite{Wat23f}, and introduce the row-insertion of type $A\mathrm{II}$.
Then, we show that it coincides with the Berele row-insertion.

Let $q$ be an indeterminate, and
\[
  \mathbf{A}_\infty := \mathbb{Q}[\![q^{-1}]\!] \cap \mathbb{Q}(q)
\]
denote the ring of rational functions that are regular at $q = \infty$.
Define an equivalence relation $\equiv_\infty$ on $\mathbb{Q}(q)$ by declaring $f \equiv_\infty g$ to mean that $f-g \in q^{-1} \mathbf{A}_\infty$.

\subsection{Quantum groups}
Let $\mathbf{U}$ denote the quantum group of $\mathfrak{gl}_{2n}$ (\cite[3.1.1 and Corollary 33.1.5]{Lus93}).
Namely, it is the unital associative $\mathbb{Q}(q)$-algebra with generators
\[
  \{ E_i,F_i,D_k^{\pm 1} \mid i \in [2n-1],\ k \in [2n] \}
\]
subject to the following relations:
for each $i,j \in [2n-1]$ and $k,l \in [2n]$, we have
\begin{align*}
  &D_k D_k^{-1} = D_k^{-1}D_k = 1,\\
  &D_k D_l = D_l D_k, \\
  &D_k E_i = q^{\delta_{k,i} - \delta_{k,i+1}} E_i D_k, \\
  &D_k F_i = q^{-\delta_{k,i} + \delta_{k,i+1}} F_i D_k, \\
  &E_i F_j - F_j E_i = \delta_{i,j} \frac{K_i - K_i^{-1}}{q - q^{-1}}, \\
  &E_i E_j = E_j E_i \quad \text{ if } |i-j| > 1, \\
  &F_i F_j = F_j F_i \quad \text{ if } |i-j| > 1, \\
  &E_i^2 E_j - (q+q^{-1}) E_i E_j E_i + E_j E_i^2 = 0 \quad \text{ if } |i-j| = 1, \\
  &F_i^2 F_j - (q+q^{-1}) F_i F_j F_i + F_j F_i^2 = 0 \quad \text{ if } |i-j| = 1,
\end{align*}
where
\[
  K_i := D_i D_{i+1}^{-1}.
\]

Let $\rho$ denote the anti-algebra involution on $\mathbf{U}$ (\cite[19.1.1]{Lus93}) defined by
\[
  \rho(E_i) := q^{-1} F_i K_i, \ \rho(F_i) := q K_i^{-1} E_i, \ \rho(D_k) := D_k, \quad \text{ for all } i \in [2n-1],\ k \in [2n].
\]

A symmetric bilinear form $(,)$ on a $\mathbf{U}$-module is said to be \emph{contragredient} if
\[
  (xu,v) = (u,\rho(x)v) \quad \text{ for all } x \in \mathbf{U},\ u,v \in M.
\]
A basis $B$ of a $\mathbf{U}$-module $M$ equipped with a contragredient bilinear form $(,)$ is said to be \emph{almost orthonormal} if
\[
  (b_1,b_2) \equiv_\infty \delta_{b_1,b_2} \quad \text{ for all } b_1,b_2 \in B.
\]

Given a $\mathbf{U}$-module $M$ equipped with a contragredient bilinear form $(,)$ and an almost orthonormal basis $B$, we define an equivalence relation $\equiv_\infty$ on $M$ by declaring $u \equiv_\infty v$ to mean
\[
  (u,b) \equiv_\infty (v,b) \quad \text{ for all } b \in B.
\]

The quantum group $\mathbf{U}$ is equipped with a Hopf algebra structure as in \cite[\S 3.3]{Lus93}.
Let $M,N$ be $\mathbf{U}$-modules equipped with contragredient bilinear forms $(,)_M$, $(,)_N$ and almost orthonormal bases $B_M$, $B_N$, respectively.
Then, the tensor product module $M \otimes N$ has a contragredient bilinear form $(,)$ defined by
\[
  (m_1 \otimes n_1, m_2 \otimes n_2) := (m_1,m_2)_M \cdot (n_1,n_2)_N \quad \text{ for all } m_1,m_2 \in M,\ n_1,n_2 \in N.
\]
The basis $\{ b \otimes b' \mid b \in B_M,\ b' \in B_N \}$ of $M \otimes N$ is almost orthonormal with respect to this bilinear form.

For each $\lambda \in \mathrm{Par}_{\leq 2n}$, there exists a unique finite-dimensional simple $\mathbf{U}$-module $V(\lambda)$ of highest weight $\lambda$.
It has an almost orthonormal basis, called the \emph{canonical basis} (\textit{cf}.\,\cite[Definition 14.4.12]{Lus93}), of the form
\[
  \{ b_T \mid T \in \mathrm{SST}_{2n}(\lambda) \}.
\]

In what follows, we often identify each element $T \in \mathrm{SST}_{2n}(1)$ with its unique entry $T(1,1) \in [2n]$.
In particular, the basis elements of $V(1)$ are denoted by $b_x$ with $x \in [2n]$.

\begin{thm}[\textit{cf}.\,{\cite{Kwo09b}}]\label{thm: sst and qg}
  Let $\lambda \in \mathrm{Par}_{\leq 2n}$.
  \begin{enumerate}
    \item\label{item: row ins, thm: sst and qg} There exists a $\mathbf{U}$-module isomorphism
    \[
      \mathrm{RI} : V(\lambda) \otimes V(1) \to \bigoplus_{\substack{\mu \in \mathrm{Par}_{\leq 2n} \\ \mu \supset \lambda,\ |\mu/\lambda| = 1}} V(\mu)
    \]
    such that
    \[
      \mathrm{RI}(b_T \otimes b_x) \equiv_\infty b_{T \leftarrow x} \quad \text{ for all } T \in \mathrm{SST}_{2n}(\lambda),\ x \in [2n].
    \]
    \item\label{item: row word, thm: sst and qg} Set $N := |\lambda|$.
    There exists an injective $\mathbf{U}$-module homomorphism
    \[
      \mathrm{RR} : V(\lambda) \to V(1)^{\otimes N}
    \]
    satisfying the following for all $T \in \mathrm{SST}_{2n}$\rm{:}
    if $w_\mathrm{row}(T) = (x_1,\dots,x_N)$, then we have
    \[
      \mathrm{RR}(T) \equiv_\infty b_{x_1} \otimes \cdots \otimes b_{x_N}.
    \]
  \end{enumerate}
\end{thm}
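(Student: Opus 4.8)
The plan is to obtain both statements by assembling standard facts from the theory of crystal and canonical (global) bases, the only real work being to phrase the combinatorial bijections in terms of the equivalence $\equiv_\infty$ and the almost orthonormal bases fixed above.

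For the first assertion I would argue as follows. Comparing formal characters with the classical Pieri rule gives an abstract $\mathbf{U}$-module isomorphism $V(\lambda)\otimes V(1)\cong\bigoplus_{\mu}V(\mu)$, the sum over $\mu\supset\lambda$ with $|\mu/\lambda|=1$; each summand has multiplicity one, so such an isomorphism is unique up to a nonzero scalar on each $V(\mu)$. Both sides are based $\mathbf{U}$-modules: the left side carries the almost orthonormal basis $\{b_T\otimes b_x\}$ together with the tensor-product contragredient form recalled before the theorem, and the right side the disjoint union of the canonical bases of the $V(\mu)$. At $q=\infty$ the associated crystals are $B(\lambda)\otimes B(1)$ and $\bigsqcup_\mu B(\mu)$, and the classical fact that Schensted row-insertion is a morphism of $\mathfrak{gl}_{2n}$-crystals (see \cite{Kwo09b}) identifies them via $b_T\otimes b_x\mapsto b_{T\leftarrow x}$. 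It then remains to lift this to the level of canonical bases: I would normalize the abstract isomorphism $\mathrm{RI}$ so that, for each $\mu$, the highest weight vector of the summand $V(\mu)$ goes to the highest weight canonical basis element of $V(\mu)$, in accordance with the crystal isomorphism above; then, using that $\mathrm{RI}$ intertwines the contragredient forms up to a scalar (which the normalization forces to be $\equiv_\infty 1$) and that the Chevalley generators act on an almost orthonormal basis by the Kashiwara operators up to lower order in $q^{-1}$, an induction along the edges of the crystal graph gives $\mathrm{RI}(b_T\otimes b_x)\equiv_\infty b_{T\leftarrow x}$.

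For the second assertion I would iterate the first. Iterating $\mathrm{RI}$ yields a based $\mathbf{U}$-module isomorphism $V(1)^{\otimes N}\xrightarrow{\sim}\bigoplus_{(P,Q)}V(\operatorname{sh}(P))$, the sum over pairs of a semistandard tableau $P$ and a standard tableau $Q$ of the same shape with at most $2n$ rows, which at $q=\infty$ realizes the Robinson--Schensted--Knuth crystal isomorphism $b_{x_1}\otimes\cdots\otimes b_{x_N}\mapsto b_P$ in the summand labelled by $Q$, with $(P,Q)$ produced from $x_1\cdots x_N$ by iterated row-insertion. Let $Q_\lambda$ be the recording tableau obtained when one row-inserts the letters of $w_\mathrm{row}(T)$ for some $T\in\mathrm{SST}_{2n}(\lambda)$; one checks that $Q_\lambda$ depends only on $\lambda$ and that, for every $T\in\mathrm{SST}_{2n}(\lambda)$, the word $w_\mathrm{row}(T)$ has insertion tableau $T$ (by Proposition \ref{prop: row word equiv}) and recording tableau $Q_\lambda$. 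I would then take $\mathrm{RR}$ to be the composition of the inclusion of the summand $V(\lambda)\cong V(\lambda)\otimes\mathbb{Q}(q)\{Q_\lambda\}$ with the inverse of the iterated isomorphism; the crystal-level description immediately gives $\mathrm{RR}(b_T)\equiv_\infty b_{x_1}\otimes\cdots\otimes b_{x_N}$ whenever $w_\mathrm{row}(T)=(x_1,\dots,x_N)$, and injectivity is automatic because $\mathrm{RR}$ is the inclusion of a direct summand.

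The step I expect to be the main obstacle is the passage, in the first assertion, from the purely combinatorial crystal isomorphism to the corresponding statement for canonical bases with the sharp normalization $\equiv_\infty$ (as opposed to only up to a sign or a power of $q$); this is precisely where one has to use the compatibility of canonical bases with tensor products and with the contragredient form, together with the induction along the crystal graph. Once that is settled, the rest, including the second assertion, is formal.
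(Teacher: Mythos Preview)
The paper does not supply a proof of this theorem at all: it is stated with the attribution \emph{cf.\,\cite{Kwo09b}} and then used as a black box, so there is no in-paper argument to compare against. Your proposal is a correct outline of the standard proof that the citation points to---Pieri decomposition plus the fact that row-insertion realizes the crystal isomorphism $B(\lambda)\otimes B(1)\cong\bigsqcup_\mu B(\mu)$, followed by the usual lift from crystals to canonical bases via a highest-weight normalization and induction along the crystal graph; the second part then follows by iterating and picking out the summand indexed by the canonical recording tableau $Q_\lambda$. Your identification of the delicate point (getting $\equiv_\infty$ on the nose rather than up to a unit) is accurate, and is handled in the literature exactly as you describe, via compatibility of canonical bases with tensor products and the contragredient form.
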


\subsection{Quantum symmetric pairs}
For each $i \in [n-1]$, set
\[
  B_{2i} := F_{2i} - q[E_{2i-1}, [E_{2i+1}, E_{2i}]_{q^{-1}}]_{q^{-1}} K_{2i}^{-1} \in \mathbf{U},
\]
where $[,]_{q^{-1}}$ denotes the $q^{-1}$-commutator:
\[
  [x,y]_{q^{-1}} := xy - q^{-1}yx.
\]
Let $\mathbf{U}^\imath$ denote the subalgebra of $\mathbf{U}$ generated by
\[
  \{ E_{2j-1}, F_{2j-1}, K_{2j-1}^{\pm 1} \mid i \in [n] \} \sqcup \{ B_{2k} \mid k \in [n-1] \}.
\]
The pair $(\mathbf{U},\mathbf{U}^\imath)$ forms a quantum symmetric pair of type $A\mathrm{II}_{2n-1}$.

The subalgebra $\mathbf{U}^\imath$ is invariant under the involution $\rho$ \cite[Proposition 4.6]{BaWa18}.
Hence, we can define the notions of contragredient bilinear forms and almost orthonormal bases, and equivalence relations $\equiv_\infty$ on modules, just as in quantum group.

Since $\mathbf{U}^\imath$ is a subalgebra of $\mathbf{U}$, each $\mathbf{U}$-module $M$ can be regarded as a $\mathbf{U}^\imath$-module by restriction.
If $(,)$ is a contragredient bilinear form of the $\mathbf{U}$-module $M$, then it is also a contragredient bilinear form of the $\mathbf{U}^\imath$-module $M$.

The subalgebra $\mathbf{U}^\imath$ is a right coideal of $\mathbf{U}$.
Hence, given a $\mathbf{U}^\imath$-module $M$ and a $\mathbf{U}$-module $N$, we can equip the tensor product $M \otimes N$ with a $\mathbf{U}^\imath$-module structure.

For each $\nu \in \mathrm{Par}_{\leq n}$, there exists a unique, up to isomorphism, simple weight $\mathbf{U}^\imath$-module $V^\imath(\nu)$ of highest weight $\nu$ (\cite[Theorem 6.3]{Mol06}; see also \cite[Proposition 3.3.9 and Corollary 4.3.2]{Wat21b}).

\begin{prop}[{\cite[Proposition 6.4.1]{Wat23f}}]\label{prop: biT}
  Let $\nu \in \mathrm{Par}_{\leq n}$.
  Then, there exists an almost orthonormal basis of $V^\imath(\nu)$ of the form
  \[
    \{ b^\imath_T \mid T \in Sp\mathrm{T}_{2n}(\nu) \}
  \]
  and a $\mathbf{U}^\imath$-module homomorphism $p_\nu : V(\nu) \to V^\imath(\nu)$ such that
  \[
    p_\nu(b_T) \equiv_\infty \begin{cases}
      b^\imath_T & \text{ if } T \in Sp\mathrm{T}_{2n}(\nu), \\
      0 & \text{ if } T \notin Sp\mathrm{T}_{2n}(\nu),
    \end{cases}
  \]
  for all $T \in \mathrm{SST}_{2n}(\nu)$.
\end{prop}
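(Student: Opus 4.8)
The plan is to obtain $V^\imath(\nu)$ and $p_\nu$ from the restriction of $V(\nu)$ along $\mathbf{U}^\imath \hookrightarrow \mathbf{U}$, and then to transport the canonical basis of $V(\nu)$ through $p_\nu$, controlling everything at $q=\infty$ by a crystal--lattice argument. First I would recall that the category of finite-dimensional weight $\mathbf{U}^\imath$-modules is semisimple and that the quantum analogue of the classical $\mathfrak{gl}_{2n}\downarrow\mathfrak{sp}_{2n}$ branching rule (as in \cite{Mol06}, \cite{Wat21b}) gives a decomposition $V(\nu)|_{\mathbf{U}^\imath}\cong\bigoplus_{\xi\in\mathrm{Par}_{\leq n}}V^\imath(\xi)^{\oplus m_{\nu,\xi}}$ with $m_{\nu,\nu}=1$ and $m_{\nu,\xi}=0$ whenever $\xi\neq\nu$ and $|\xi|\geq|\nu|$. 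Since $\rho$ stabilizes $\mathbf{U}^\imath$ (\cite[Proposition 4.6]{BaWa18}), the contragredient form $(\,,)$ on $V(\nu)$ is also $\mathbf{U}^\imath$-contragredient, so the isotypic components are pairwise orthogonal and $(\,,)$ is nondegenerate on each; I define $p_\nu\colon V(\nu)\to V^\imath(\nu)$ to be the orthogonal projection onto the unique copy of $V^\imath(\nu)$ and equip the latter with the restricted form, which is again contragredient and may be normalized so that the highest-weight vector has norm $\equiv_\infty 1$.

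The core of the argument is to describe $p_\nu$ on the $\mathbf{A}_\infty$-lattice $L(\nu):=\bigoplus_{T\in\mathrm{SST}_{2n}(\nu)}\mathbf{A}_\infty b_T$ spanned by the canonical basis. I would check that $L(\nu)$ is stable under the integral form of $\mathbf{U}^\imath$, so that $L^\imath(\nu):=p_\nu(L(\nu))$ is a $\mathbf{U}^\imath$-stable $\mathbf{A}_\infty$-lattice in $V^\imath(\nu)$, and then pass to the reduction $\overline{L^\imath(\nu)}:=L^\imath(\nu)/q^{-1}L^\imath(\nu)$. Using the explicit $\imath$-crystal operators on $B(\nu):=L(\nu)/q^{-1}L(\nu)$ from \cite{Wat21b}, one decomposes $B(\nu)$ into connected $\imath$-crystal components; the key point is that the reduction of $p_\nu$ at $q=\infty$ is the projection onto the span of the unique component isomorphic to the $\imath$-crystal of $V^\imath(\nu)$, and that this distinguished component is exactly the set $Sp\mathrm{T}_{2n}(\nu)$ of symplectic tableaux -- this is where King's condition $T(i,1)\geq 2i-1$ enters. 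Concretely, this yields that $\{\overline{p_\nu(b_T)}\mid T\in Sp\mathrm{T}_{2n}(\nu)\}$ is a basis of $\overline{L^\imath(\nu)}$ while $\overline{p_\nu(b_T)}=0$, i.e.\ $p_\nu(b_T)\equiv_\infty 0$, for $T\notin Sp\mathrm{T}_{2n}(\nu)$; in particular $\{p_\nu(b_T)\mid T\in Sp\mathrm{T}_{2n}(\nu)\}$ is an $\mathbf{A}_\infty$-basis of $L^\imath(\nu)$ and a $\mathbb{Q}(q)$-basis of $V^\imath(\nu)$, of the correct cardinality $|Sp\mathrm{T}_{2n}(\nu)|=\dim V^\imath(\nu)$.

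It remains to upgrade this to an almost orthonormal basis and to record the congruence. Since $p_\nu$ is the orthogonal projection, for symplectic $T,T'$ one has $(b_T,b_{T'})=(p_\nu b_T,p_\nu b_{T'})+((1-p_\nu)b_T,(1-p_\nu)b_{T'})$, and the lattice analysis of the previous step shows that the second summand lies in $q^{-1}\mathbf{A}_\infty$; combined with almost orthonormality of the canonical basis this gives $(p_\nu b_T,p_\nu b_{T'})\equiv_\infty\delta_{T,T'}$. Hence $\{p_\nu(b_T)\mid T\in Sp\mathrm{T}_{2n}(\nu)\}$ is already an almost orthonormal basis, and one may take $b^\imath_T:=p_\nu(b_T)$; if one further wants the $b^\imath_T$ to be fixed by the $\imath$-bar involution (so as to deserve the name $\imath$-canonical basis), one replaces $p_\nu(b_T)$ by the unique bar-invariant element of $L^\imath(\nu)$ in the same class modulo $q^{-1}L^\imath(\nu)$, which preserves almost orthonormality and makes $p_\nu(b_T)\equiv_\infty b^\imath_T$ hold automatically. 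In either case all the assertions of the proposition follow.

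The main obstacle is the crystal-theoretic identification in the second paragraph: proving that the connected $\imath$-crystal component of $B(\nu)$ detected by $p_\nu$ at $q=\infty$ is precisely $Sp\mathrm{T}_{2n}(\nu)$ and that no cancellation occurs among the surviving canonical basis vectors at the lattice level. This is where one must marry the explicit combinatorial model for the $\imath$-crystal operators with King's description of symplectic tableaux; everything else (semisimplicity, orthogonality of isotypic components, normalization, and the bar-invariant correction) is formal once this identification is in hand.
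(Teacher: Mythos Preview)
The paper does not give its own proof of this proposition; it is imported wholesale from \cite[Proposition~6.4.1]{Wat23f}, so there is no in-paper argument to compare against. What you have written is a plausible reconstruction of the strategy of \cite{Wat23f}: realize $V^\imath(\nu)$ as the unique isotypic component of highest weight $\nu$ in $V(\nu)|_{\mathbf{U}^\imath}$, take $p_\nu$ to be the orthogonal projection with respect to the contragredient form, and analyse the induced map at $q=\infty$ via an $\imath$-crystal structure on $B(\nu)$. You correctly locate the combinatorial heart of the matter in identifying the component of $B(\nu)$ picked out by $p_\nu$ with $Sp\mathrm{T}_{2n}(\nu)$.

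There is, however, a genuine gap in the way you pass from the crystal identification to almost orthonormality. You assert that stability of $L(\nu)$ under an integral form of $\mathbf{U}^\imath$ makes $L^\imath(\nu):=p_\nu(L(\nu))$ a lattice and then that ``the lattice analysis of the previous step'' forces $((1-p_\nu)b_T,(1-p_\nu)b_{T'})\in q^{-1}\mathbf{A}_\infty$ for symplectic $T,T'$. But $p_\nu$ is a module-theoretic projector, not an element of the algebra, so neither the inclusion $p_\nu(L(\nu))\subseteq L(\nu)$ nor the splitting $L(\nu)=p_\nu L(\nu)\oplus(1-p_\nu)L(\nu)$ is automatic; this compatibility of the isotypic decomposition with the canonical lattice is precisely the substantive content of the $\imath$-canonical basis theory (triangularity with respect to the canonical basis, stability under the $\imath$-bar involution) and must be proved, not assumed. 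Without it, knowing that $p_\nu(b_T)\equiv_\infty 0$ for non-symplectic $T$ says nothing about $(1-p_\nu)b_T$ for symplectic $T$, and your orthonormality computation does not go through. In short, your outline is the right one, but the step you label ``formal'' is where most of the work in \cite{Wat23f} actually lives.
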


\begin{prop}\label{prop: forget}
  Let $\nu \in \mathrm{Par}_{\leq n}$.
  Then, there exists an injective $\mathbf{U}^\imath$-module homomorphism
  \[
    \mathrm{For} : V^\imath(\nu) \to V(\nu)
  \]
  such that
  \[
    \mathrm{For}(b^\imath_T) \equiv_\infty b_T \quad \text{ for all } T \in Sp\mathrm{T}_{2n}(\nu).
  \]
\end{prop}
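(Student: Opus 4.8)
The natural strategy is to build \(\mathrm{For}\) as an adjoint-type map to the projection \(p_\nu : V(\nu) \to V^\imath(\nu)\) of Proposition \ref{prop: biT}, using the contragredient bilinear forms on both modules. First I would fix the contragredient bilinear form \((,)^\imath\) on \(V^\imath(\nu)\) for which \(\{ b^\imath_T \mid T \in Sp\mathrm{T}_{2n}(\nu)\}\) is almost orthonormal, and the form \((,)\) on \(V(\nu)\) for which the canonical basis \(\{ b_T \mid T \in \mathrm{SST}_{2n}(\nu)\}\) is almost orthonormal. Since \(V(\nu)\) is finite-dimensional and its form is nondegenerate, every linear functional on \(V(\nu)\) is represented by a unique vector; so I can \emph{define} \(\mathrm{For}(w) \in V(\nu)\), for \(w \in V^\imath(\nu)\), to be the unique vector satisfying
\[
  (\mathrm{For}(w), v) = (w, p_\nu(v))^\imath \quad \text{for all } v \in V(\nu).
\]
This is manifestly linear in \(w\).

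The next step is to check \(\mathbf{U}^\imath\)-equivariance. For \(x \in \mathbf{U}^\imath\), using that \((,)\) and \((,)^\imath\) are contragredient with respect to the \emph{same} antiautomorphism \(\rho\) (this is where \(\rho(\mathbf{U}^\imath) = \mathbf{U}^\imath\), cited from \cite[Proposition 4.6]{BaWa18}, is essential), that \(p_\nu\) is a \(\mathbf{U}^\imath\)-homomorphism, and the defining identity, one computes
\[
  (\mathrm{For}(xw), v) = (xw, p_\nu(v))^\imath = (w, \rho(x) p_\nu(v))^\imath = (w, p_\nu(\rho(x) v))^\imath = (\mathrm{For}(w), \rho(x) v) = (x\, \mathrm{For}(w), v)
\]
for all \(v\); nondegeneracy of \((,)\) then gives \(\mathrm{For}(xw) = x\,\mathrm{For}(w)\). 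So \(\mathrm{For}\) is a \(\mathbf{U}^\imath\)-module homomorphism.

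It remains to pin down the leading term \(\mathrm{For}(b^\imath_T) \equiv_\infty b_T\) and deduce injectivity. For \(T \in Sp\mathrm{T}_{2n}(\nu)\) and any \(S \in \mathrm{SST}_{2n}(\nu)\), I would evaluate \((\mathrm{For}(b^\imath_T), b_S) = (b^\imath_T, p_\nu(b_S))^\imath\). By Proposition \ref{prop: biT}, \(p_\nu(b_S) \equiv_\infty b^\imath_S\) if \(S\) is symplectic and \(\equiv_\infty 0\) otherwise; combined with the almost-orthonormality of \(\{b^\imath_T\}\) this gives \((\mathrm{For}(b^\imath_T), b_S) \equiv_\infty \delta_{S,T}\) (for symplectic \(S\)) and \(\equiv_\infty 0\) (for non-symplectic \(S\)), which is exactly the statement \(\mathrm{For}(b^\imath_T) \equiv_\infty b_T\) relative to the almost orthonormal basis \(\{b_S\}\) of \(V(\nu)\). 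Finally, injectivity: the vectors \(\{ \mathrm{For}(b^\imath_T) \mid T \in Sp\mathrm{T}_{2n}(\nu)\}\) have leading terms, with respect to \(\equiv_\infty\), equal to the distinct canonical basis vectors \(b_T\); a standard unitriangularity argument over \(\mathbf{A}_\infty\) shows such a family is linearly independent over \(\mathbb{Q}(q)\), so \(\mathrm{For}\) is injective.

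The step I expect to require the most care is making the \(\equiv_\infty\)-bookkeeping in the last paragraph rigorous: one must verify that ``\((u, b_S) \equiv_\infty \delta_{S,T}\) for all \(S\)'' genuinely forces \(u \equiv_\infty b_T\) in the sense defined in the excerpt, and that the unitriangular family \(\{\mathrm{For}(b^\imath_T)\}\) is independent — this uses that \(\equiv_\infty\) is compatible with the \(\mathbf{A}_\infty\)-lattice spanned by \(\{b_S\}\), so a nontrivial \(\mathbb{Q}(q)\)-linear relation, after clearing denominators and dividing by the highest power of \(q\), would reduce mod \(q^{-1}\mathbf{A}_\infty\) to a nontrivial relation among distinct basis vectors, a contradiction. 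Everything else is a short formal computation once the adjoint map is set up.
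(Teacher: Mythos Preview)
Your proof is correct. The paper's own argument is a two-sentence sketch: it observes that $V(\nu)$ is semisimple as a $\mathbf{U}^\imath$-module because it carries a nondegenerate contragredient form, and then simply says the assertion follows from Proposition~\ref{prop: biT}, i.e., one splits the surjection $p_\nu$. Your construction is a concrete realization of exactly that splitting: the adjoint of $p_\nu$ with respect to the two contragredient forms lands in the orthogonal complement of $\ker p_\nu$, which is precisely the $\mathbf{U}^\imath$-complement that semisimplicity abstractly guarantees. What your route buys is that the $\equiv_\infty$ property and injectivity are verified by a direct computation with the almost orthonormal bases, whereas the paper leaves those verifications (in particular, why the chosen section satisfies $\mathrm{For}(b^\imath_T)\equiv_\infty b_T$) to the reader. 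Conversely, the paper's phrasing makes clear at a glance why such a map must exist without any calculation. The two arguments rest on the same ingredient and produce the same map.
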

\begin{proof}
  The $\mathbf{U}^\imath$-module $V(\nu)$ is semisimple since it is equipped with a nondegenerate contragredient bilinear form.
  Hence, the assertion follows from Proposition~\ref{prop: biT}.
\end{proof}

\subsection{Littlewood--Richardson maps}
In this subsection, we fix $\lambda \in \mathrm{Par}_{\leq 2n}$.

Given a word $w = (x_1,\dots,x_l) \in \mathcal{W}_{2n}$ and integers $a,b \in \mathbb{Z}$, set
\[
  w[a,b] := (w_a,w_{a+1},\dots,w_b), \quad w[a] := w[1,a].
\]

Let $T \in \mathrm{SST}_{2n}(\lambda)$.
Let $\mathbf{a} = (a_1,\dots,a_l)$ denote the first column of $T$ (read from the top to bottom), and $S$ the remaining part:
\[
  l := \ell(\lambda), \quad a_i := T(i,1), \quad S(i,j) := T(i,j+1).
\]
Define a subword $\operatorname{rem}(\mathbf{a}) \subseteq \mathbf{a}$ by the following recursive formula:
\[
  \operatorname{rem}(\mathbf{a}) := \begin{cases}
    \emptyset & \text{ if } l \leq 1, \\
    \mathrm{rem}(\mathbf{a}[l-2]) \cdot \mathbf{a}[l-1,l] & \text{ if } l \geq 2,\ a_l \in 2\mathbb{Z},\ a_{l-1} = a_l-1, \text{ and }\\
    & \phantom{if\ } a_l < 2l - |\mathrm{rem}(\mathbf{a}[l-2])| - 1, \\
    \mathrm{rem}(\mathbf{a}[l-1]) & \text{ otherwise}.
  \end{cases}
\]
Also, set $\operatorname{red}(\mathbf{a})$ to be the subword of $\mathbf{a}$ consisting of letters not in $\operatorname{rem}(\mathbf{a})$,
and $\operatorname{suc}(T)$ to be the unique semistandard tableau whose row-word is Knuth equivalent to $\operatorname{red}(\mathbf{a})^\mathrm{rev} \cdot w_\mathrm{col}(S)$, where $\mathrm{red}(\mathbf{a})^\mathrm{rev}$ denotes the reverse of $\mathrm{red}(\mathbf{a})$.

\begin{ex}\label{ex: suc}
  Let
  \[
    T := \begin{ytableau}
      1 & 1 & 2 & 2 & 3 \\
      3 & 3 & 3 & 8 \\
      4 & 6 & 8 \\
      6 & 8 & 9 \\
      8 \\
      9
    \end{ytableau}.
  \]
  Then,
  \[
    \mathbf{a} = (1,3,4,6,8,9),\ \operatorname{red}(\mathbf{a}) = (1,6,8,9),\ S = \begin{ytableau}
      1 & 2 & 2 & 3 \\
      3 & 3 & 8 \\
      6 & 8 \\
      8 & 9
    \end{ytableau}.
  \]
  The word $\operatorname{red}(\mathbf{a})^\mathrm{rev} \cdot w_\mathrm{col}(S)$ coincides with the column-word of $T$ in Example \ref{ex: rect}.
  Hence, by Corollary \ref{cor: rect and col-word}, we obtain
  \[
    \operatorname{suc}(T) =  \begin{ytableau}
      1 & 1 & 2 & 2 & 3 \\
      3 & 3 & 8 & 8 \\
      6 & 6 & 9 \\
      8 & 8 \\
      9
    \end{ytableau}.
  \]
\end{ex}

\begin{lem}[{\cite[Corollary 4.3.8]{Wat23f}}]\label{lem: symp suc}
  For each $T \in \mathrm{SST}_{2n}(\lambda)$, the following are equivalent.
  \begin{enumerate}
    \item $T$ is symplectic.
    \item $\mathrm{suc}(T) = T$.
  \end{enumerate}
\end{lem}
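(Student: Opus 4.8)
The plan is to analyze the recursion defining $\operatorname{rem}(\mathbf{a})$ and show that it ``fails to remove anything'' precisely when the first column of $T$ satisfies the symplectic condition. Write $l := \ell(\lambda)$ and $a_i := T(i,1)$ for the first column. First I would establish, by induction on $l$, the key claim: $\operatorname{rem}(\mathbf{a}) = \emptyset$ if and only if $a_i \geq 2i-1$ for all $i \in [l]$, and moreover in that case $|\operatorname{rem}(\mathbf{a}[m])| = 0$ for every $m \leq l$. The forward direction of this claim is the delicate one: one must check that the threshold condition $a_l < 2l - |\operatorname{rem}(\mathbf{a}[l-2])| - 1$ appearing in the recursion, combined with the induction hypothesis that $\operatorname{rem}(\mathbf{a}[l-2]) = \emptyset$ (so the threshold is exactly $2l-1$), is incompatible with $a_l \geq 2l-1$; this handles the only branch that could remove letters. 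For the converse, if $a_s < 2s-1$ for some $s$, take the minimal such $s$; since $T$ is semistandard and $a_{s-1} \geq 2s-3$ (by minimality) while $a_s < 2s-1$ forces $a_{s-1} < a_s \le 2s-2$, one gets $a_{s-1} = 2s-3$ and $a_s = 2s-2$, so $a_s$ is even and $a_{s-1} = a_s - 1$ and the threshold $2s - |\operatorname{rem}(\mathbf{a}[s-2])| - 1 = 2s-1$ exceeds $a_s$; hence the first recursion branch fires at step $s$ and $\operatorname{rem}(\mathbf{a}[s]) \neq \emptyset$, which propagates to $\operatorname{rem}(\mathbf{a}) \neq \emptyset$.

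Given the key claim, the proof of $(2) \Rightarrow (1)$ is: if $\operatorname{suc}(T) = T$ then in particular $\operatorname{sh}(\operatorname{suc}(T)) = \lambda$, and since $\operatorname{suc}(T)$ is the rectification of a tableau of shape with first column $\operatorname{red}(\mathbf{a})$ and remaining part $S$, counting boxes forces $|\operatorname{red}(\mathbf{a})| = l$, i.e. $\operatorname{rem}(\mathbf{a}) = \emptyset$; by the claim, $a_i \geq 2i-1$ for all $i$, so $T$ is symplectic by Definition \ref{def: spt}.

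For $(1) \Rightarrow (2)$: if $T$ is symplectic, the claim gives $\operatorname{rem}(\mathbf{a}) = \emptyset$, so $\operatorname{red}(\mathbf{a}) = \mathbf{a} = (a_1,\dots,a_l)$ and $\operatorname{red}(\mathbf{a})^{\mathrm{rev}} = (a_l,\dots,a_1)$. Then $\operatorname{suc}(T)$ is by definition the unique semistandard tableau whose row-word is Knuth equivalent to $(a_l,\dots,a_1) \cdot w_\mathrm{col}(S)$; but this word is exactly $w_\mathrm{col}(T)$ (reading the first column bottom-to-top, then the columns of $S$). By Proposition \ref{prop: row col}, $w_\mathrm{col}(T) \equiv w_\mathrm{row}(T)$, and $T$ is itself semistandard, so by the uniqueness in Proposition \ref{prop: row word equiv} we conclude $\operatorname{suc}(T) = T$.

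The main obstacle is the inductive verification of the key claim — specifically, tracking how the quantity $|\operatorname{rem}(\mathbf{a}[l-2])|$ inside the recursion's threshold interacts with the induction hypothesis. One has to be careful that the induction hypothesis is stated strongly enough (namely that $\operatorname{rem}$ of every initial segment vanishes, not just of the full word) so that the threshold in the recursive step is genuinely $2l-1$; a naive induction that only remembers $\operatorname{rem}(\mathbf{a}) = \emptyset$ would not close. Everything else is a routine unwinding of definitions together with the already-established uniqueness results for semistandard tableaux and column/row words.
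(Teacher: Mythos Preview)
The paper does not supply its own proof of this lemma; it is quoted verbatim from \cite[Corollary 4.3.8]{Wat23f}. Your plan is a correct self-contained argument. The key claim (that $\operatorname{rem}(\mathbf{a}) = \emptyset$ iff $a_i \geq 2i-1$ for all $i$) goes through exactly as you outline: in the forward direction the induction hypothesis makes the threshold equal to $2l-1$, blocking the removal branch; in the converse direction, minimality of $s$ forces $a_{s-1}=2s-3$, $a_s=2s-2$, and again $\operatorname{rem}(\mathbf{a}[s-2])=\emptyset$ (by the forward direction already established), so the removal branch fires at step $s$. The propagation to $\operatorname{rem}(\mathbf{a}) \neq \emptyset$ is immediate since the middle branch always appends two letters while the other branch copies the previous value. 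Your deductions of both implications from the key claim are then routine: box-counting for $(2)\Rightarrow(1)$, and for $(1)\Rightarrow(2)$ the identity $\operatorname{red}(\mathbf{a})^{\mathrm{rev}} \cdot w_{\mathrm{col}}(S) = w_{\mathrm{col}}(T)$ combined with Propositions~\ref{prop: row word equiv} and~\ref{prop: row col}.
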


\begin{defi}[{\cite[\S 3.1]{Wat23f}}]\label{def: lr AII}
  The \emph{Littlewood--Richardson map} of type $A\mathrm{II}$ is the algorithm which takes $T \in \mathrm{SST}_{2n}(\lambda)$ as input, and outputs a pair $(P(T),Q(T))$ of tableaux as follows.
  \begin{enumerate}
    \item Initialize $t := 0$, $P^t := T$, $\nu^t := \operatorname{sh}(P^t)$, and $Q^t$ to be the unique skew tableau of shape $\lambda/\lambda$.
    \item\label{step: 2 def: lr AII} Set $P^{t+1} := \mathrm{suc}(P^t)$, $\nu^{t+1} := \operatorname{sh}(P^{t+1})$, and $Q^{t+1}$ to be the skew tableau of shape $\lambda/\nu^{t+1}$ defined by
    \[
      Q^{t+1}(i,j) := \begin{cases}
        Q^t(i,j) & \text{ if } (i,j) \notin D(\nu^t), \\
        t+1 & \text{ if } (i,j) \in D(\nu^t),
      \end{cases}
    \]
    for each $(i,j) \in D(\lambda/\nu^{t+1})$.
    \item \begin{enumerate}
      \item If $P^{t+1} = P^t$, then return $(P^t,Q^t)$.
      \item Otherwise, increment $t$, and then go back to step \eqref{step: 2 def: lr AII}.
    \end{enumerate}
  \end{enumerate}
\end{defi}

Set
\[
  \mathrm{Rec}_{2n}(\lambda/\nu) := \{ Q(T) \mid T \in \mathrm{SST}_{2n}(\lambda) \text{ such that } \operatorname{sh}(P(T)) = \nu \}.
\]
An explicit description of the set $\mathrm{Rec}_{2n}(\lambda/\nu)$ can be found in \cite[Theorem 3.1.4 (2)]{Wat23f}.

\begin{thm}[{\cite[Theorem 3.1.4 (1)]{Wat23f}}]
  The Littlewood--Richardson map of type $A\mathrm{II}$ gives rise to a bijection
  \[
    \mathrm{SST}_{2n}(\lambda) \to \bigsqcup_{\substack{\nu \in \mathrm{Par}_{\leq n} \\ \nu \subseteq \lambda}} (Sp\mathrm{T}_{2n}(\nu) \times \mathrm{Rec}_{2n}(\lambda/\nu)).
  \]
\end{thm}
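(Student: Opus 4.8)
The plan is to prove bijectivity by exhibiting an explicit two-sided inverse of $T \mapsto (P(T),Q(T))$. Before that, I would settle well-definedness. If $\operatorname{rem}(\mathbf{a})=\emptyset$ for the first column $\mathbf{a}$ of a tableau, then $\operatorname{red}(\mathbf{a})^{\mathrm{rev}}\cdot w_\mathrm{col}(S)$ is literally the column-word of that tableau, which is Knuth equivalent to its row-word by Proposition~\ref{prop: row col}; hence by the uniqueness in Proposition~\ref{prop: row word equiv} the tableau is a fixed point of $\operatorname{suc}$. Consequently, whenever $\operatorname{suc}(P^t)\neq P^t$ in Definition~\ref{def: lr AII} we must have $\operatorname{rem}(\mathbf{a})\neq\emptyset$, so $|\operatorname{sh}(P^{t+1})| = |\operatorname{red}(\mathbf{a})| + |S| < |\mathbf{a}| + |S| = |\operatorname{sh}(P^t)|$, and the iteration terminates. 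One also checks along the way that the intermediate shapes decrease, $\lambda = \nu^0 \supseteq \nu^1 \supseteq \cdots \supseteq \nu^{t_{\max}} = \nu$, so that $Q(T)$ is a genuine skew tableau of shape $\lambda/\nu$; at the last step $\operatorname{suc}(P(T)) = P(T)$, so $P(T)$ is symplectic by Lemma~\ref{lem: symp suc}, and $Q(T) \in \mathrm{Rec}_{2n}(\lambda/\nu)$ holds by the very definition of that set. Thus the map lands in the asserted codomain.

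Next I would reduce bijectivity to inverting one application of $\operatorname{suc}$. The recording tableau $Q(T)$ determines the full chain of shapes, since $D(\nu^t/\nu^{t+1})$ is exactly the set of boxes carrying the value $t+1$; so to recover $T$ from a pair $(P',Q')$ it suffices to recover $P^{t}$ from $P^{t+1}$ together with the skew shape $\nu^{t}/\nu^{t+1}$. Hence the core is a ``predecessor'' operation: for a tableau $U$ of shape $\mu'$ and a horizontal strip $\mu/\mu'$ that is admissible in the sense of the explicit description of $\mathrm{Rec}_{2n}$ in \cite[Theorem~3.1.4~(2)]{Wat23f}, there should be a unique tableau $\operatorname{pred}(U;\mu/\mu')$ of shape $\mu$ with $\operatorname{suc}(\operatorname{pred}(U;\mu/\mu')) = U$ and with prescribed set of removed boxes.

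To build $\operatorname{pred}$ I would work at the level of words. By Corollary~\ref{cor: rect and col-word} together with Proposition~\ref{prop: row word equiv}, $\operatorname{suc}(V)$ is the unique semistandard tableau Knuth equivalent to $\operatorname{red}(\mathbf{b})^{\mathrm{rev}}\cdot w_\mathrm{col}(S_V)$, where $\mathbf{b}$ and $S_V$ are the first column of $V$ and the rest. To invert, apply reverse sliding and reverse rectification to $U$, guided by the strip $\mu/\mu'$ (Propositions~\ref{prop: rev sl preserve Knu equiv} and~\ref{prop: row word and rev rect}), separating out the skew part $S_V$ and the reduced first column $\operatorname{red}(\mathbf{b})$; then re-insert the even-size family of vertical dominoes recorded by $\mu/\mu'$ into the first column at the rows forced by admissibility, obtaining $\mathbf{b}$ and hence $V$. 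One then verifies: (i) $V$ is semistandard of shape $\mu$; (ii) $\operatorname{rem}(\mathbf{b})$ is precisely the set of re-inserted dominoes, which is where the size conditions $a_l < 2l - |\operatorname{rem}(\mathbf{a}[l-2])| - 1$ in the definition of $\operatorname{rem}$ must be matched against the admissibility of $\mu/\mu'$; and (iii) consequently $\operatorname{suc}(V) = U$. The uniqueness in Proposition~\ref{prop: row word equiv} then yields $\operatorname{suc}\circ\operatorname{pred} = \operatorname{id}$ and $\operatorname{pred}\circ\operatorname{suc} = \operatorname{id}$ on the relevant domains. Iterating $\operatorname{pred}$ up the shape chain recorded by $Q'$ produces a $T \in \mathrm{SST}_{2n}(\lambda)$ with $P(T) = P'$ and $Q(T) = Q'$, giving surjectivity, while injectivity is the $\operatorname{pred}\circ\operatorname{suc} = \operatorname{id}$ direction.

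The main obstacle is item (ii): proving that after the dominoes are re-inserted the recursive procedure $\operatorname{rem}$ strips off exactly those dominoes, i.e. that the delicate size inequalities neither over- nor under-fire. This calls for a careful induction down the first column, run in tandem with the reverse-sliding bookkeeping, and it is exactly here that the explicit classification of admissible strips — the companion part of the cited theorem — is indispensable; the remaining verifications are routine manipulations with the Knuth-equivalence machinery of Section~\ref{sect: tab}. An alternative, possibly cleaner, route would be to deduce the statement representation-theoretically by matching characters of the $\mathbf{U}^\imath$-modules involved, but the combinatorial inverse above is the more self-contained option.
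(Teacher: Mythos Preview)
The paper does not prove this theorem: it is quoted from \cite[Theorem~3.1.4~(1)]{Wat23f} with that citation as its sole justification, and no argument appears in the present paper. There is therefore no in-paper proof to compare your attempt against.

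Regarding your sketch itself, a few remarks. First, since $\mathrm{Rec}_{2n}(\lambda/\nu)$ is \emph{defined} just above the theorem as the image $\{Q(T)\mid T\in\mathrm{SST}_{2n}(\lambda),\ \operatorname{sh}(P(T))=\nu\}$, surjectivity of $T\mapsto(P(T),Q(T))$ is tautological and only injectivity needs an argument; a good part of your well-definedness discussion is therefore unnecessary. Second, two substantive steps in your outline remain unjustified: you assert that the intermediate shapes nest, $\nu^{t+1}\subseteq\nu^t$, and that each difference $\nu^t/\nu^{t+1}$ is a horizontal strip, but neither is obvious, since $\operatorname{suc}$ removes a pair from the first column and then \emph{rectifies}, and the shape change after rectification is not straightforward to control. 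Third, your construction of $\operatorname{pred}$ explicitly invokes the admissibility criteria from \cite[Theorem~3.1.4~(2)]{Wat23f}, i.e.\ the companion part of the very result under discussion; unless you establish that description independently, the argument is circular. These are genuine gaps rather than routine checks, and filling them is essentially the content of the original proof in \cite{Wat23f}.
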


\begin{thm}[{\cite[Theorem 7.2.1]{Wat23f}}]\label{thm: qua lr}
  There exists a $\mathbf{U}^\imath$-module isomorphism
  \[
    \mathrm{LR} : V(\lambda) \to \bigoplus_{\substack{\nu \in \mathrm{Par}_{\leq n} \\ \nu \subseteq \lambda}} (V^\imath(\nu) \otimes \mathbb{Q}(q) \mathrm{Rec}_{2n}(\lambda/\nu))
  \]
  such that
  \[
    \mathrm{LR}(b_T) \equiv_\infty b^\imath_{P(T)} \otimes Q(T) \quad \text{ for all } T \in \mathrm{SST}_{2n}(\lambda).
  \]
  Here, the linear space $\mathbb{Q}(q) \mathrm{Rec}_{2n}(\lambda/\nu)$ is regarded as a $\mathbf{U}$-module by the trivial action, and equipped with an almost orthonormal basis $\mathrm{Rec}_{2n}(\lambda,\nu)$.
\end{thm}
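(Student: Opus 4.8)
The plan is to deduce Theorem~\ref{thm: qua lr} from the combinatorial Littlewood--Richardson bijection of type $A\mathrm{II}$ stated above (\cite[Theorem 3.1.4 (1)]{Wat23f}) by lifting it, in the framework of almost orthonormal bases, to an isomorphism of $\mathbf{U}^\imath$-modules. First I would equip the target with the relevant structure: each $V^\imath(\nu)$ has the contragredient form and the almost orthonormal basis $\{b^\imath_S \mid S \in Sp\mathrm{T}_{2n}(\nu)\}$ of Proposition~\ref{prop: biT}, the multiplicity space $\mathbb{Q}(q)\mathrm{Rec}_{2n}(\lambda/\nu)$ carries the trivial action with the orthonormal basis $\mathrm{Rec}_{2n}(\lambda/\nu)$, so the tensor-product basis $\{b^\imath_S \otimes Q\}$ is almost orthonormal, and the orthogonal direct sum over $\nu$ produces an almost orthonormal basis of the whole target indexed by $\bigsqcup_\nu (Sp\mathrm{T}_{2n}(\nu)\times\mathrm{Rec}_{2n}(\lambda/\nu))$. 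On the source, $V(\lambda)$ has the almost orthonormal canonical basis $\{b_T\}$. The combinatorial bijection $T\mapsto(P(T),Q(T))$ matches the two index sets, so $b_T\mapsto b^\imath_{P(T)}\otimes Q(T)$ is a bijection of almost orthonormal bases; the task is to show it is realized by a $\mathbf{U}^\imath$-module isometry.

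Both modules are semisimple over $\mathbf{U}^\imath$, since they carry nondegenerate contragredient forms (the argument in the proof of Proposition~\ref{prop: forget}). I would decompose each into $\mathbf{U}^\imath$-isotypic components, the source as $\bigoplus_\nu V^\imath(\nu)^{\oplus m_\nu}$ and the target as $\bigoplus_\nu V^\imath(\nu)^{\oplus|\mathrm{Rec}_{2n}(\lambda/\nu)|}$, and match the multiplicities $m_\nu=|\mathrm{Rec}_{2n}(\lambda/\nu)|$ by comparing $\imath$-weight multiplicities: the combinatorial bijection together with $\dim V^\imath(\nu)=|Sp\mathrm{T}_{2n}(\nu)|$ from Proposition~\ref{prop: biT} gives the total count, and a triangularity argument in the $\imath$-weight grading separates the $\nu$-summands. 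To obtain the isomorphism with the prescribed leading terms I would construct it by iterating the quantum analogue of $\operatorname{suc}$ along the shapes $\nu^0\supseteq\nu^1\supseteq\cdots$ of Definition~\ref{def: lr AII}: each step records one more label of $Q$, the projection $p_\nu$ of Proposition~\ref{prop: biT} collapses $V(\nu)$ onto $V^\imath(\nu)$ once $\operatorname{suc}$ stabilizes (i.e.\ when the tableau becomes symplectic, Lemma~\ref{lem: symp suc}), and the inclusion $\mathrm{For}$ of Proposition~\ref{prop: forget} shows each summand embeds back into the ambient $\mathbf{U}$-module, so that almost-orthonormality and injectivity propagate; the uniqueness of a semistandard tableau in a prescribed Knuth class (Propositions~\ref{prop: row word equiv} and~\ref{prop: row word and rect}) makes all of this well defined.

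The main obstacle is that the combinatorial map $T\mapsto(P(T),Q(T))$ must be compatible with the $\mathbf{U}^\imath$-module structure, not merely a dimension-preserving bijection: concretely, that $\operatorname{suc}$ commutes at $q=\infty$ with the Kashiwara operators for $E_{2j-1},F_{2j-1}$ and with the operators attached to the $B_{2k}$, i.e.\ that it is a morphism of $\imath$-crystals, so that the two almost orthonormal bases are aligned with no obstructing off-diagonal $q^{-1}$-corrections. My approach would be to rewrite $\operatorname{suc}(T)$, via Corollary~\ref{cor: rect and col-word}, as a rectification governed by the column word $\operatorname{red}(\mathbf{a})^\mathrm{rev}\cdot w_\mathrm{col}(S)$, and then to analyze the Kashiwara operators acting on the first column $\mathbf{a}$: passing from $\mathbf{a}$ to $\operatorname{red}(\mathbf{a})$ deletes exactly the pairs $(2i-1,2i)$ selected by $\operatorname{rem}(\mathbf{a})$, which, after restricting $V(1^{\ell})$ to $\mathbf{U}^\imath$, corresponds to projecting onto the sum of the lower irreducible summands --- precisely the passage to a smaller shape in the $\imath$-crystal. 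Proving this local statement on columns and propagating it through rectification (which preserves Knuth classes, Proposition~\ref{prop: row word and rect}) is the technical heart; an alternative that sidesteps the column analysis is to embed $V(\lambda)\hookrightarrow V(1)^{\otimes N}$ via $\mathrm{RR}$ of Theorem~\ref{thm: sst and qg} and to build $\mathrm{LR}$ inside $V(1)^{\otimes N}$, where $V(1)$ restricts to the irreducible vector $\mathbf{U}^\imath$-module and one can use the tensor-product $\imath$-crystal rules directly.
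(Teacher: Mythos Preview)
The present paper does not prove this theorem: the header carries the citation \cite[Theorem 7.2.1]{Wat23f}, and the result is simply quoted as input from that reference, with no argument given here. So there is no ``paper's own proof'' to compare your proposal against; the actual proof lives in \cite{Wat23f} and relies on the detailed theory of $\imath$canonical bases developed there.

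As for your proposal on its own merits: what you have written is a plan, not a proof. You correctly identify the key difficulty --- that the combinatorial bijection $T\mapsto(P(T),Q(T))$ must be realized by a genuine $\mathbf{U}^\imath$-module map, which amounts to showing that $\operatorname{suc}$ is an $\imath$crystal morphism --- but you do not resolve it. Your two suggested routes (local analysis of $\operatorname{rem}(\mathbf{a})$ on columns, or embedding into $V(1)^{\otimes N}$ via $\mathrm{RR}$) are plausible strategies, and indeed the second is close in spirit to how such results are typically established, but neither is carried out. In particular, the step ``passing from $\mathbf{a}$ to $\operatorname{red}(\mathbf{a})$ \dots\ corresponds to projecting onto the sum of the lower irreducible summands'' is exactly the nontrivial content that needs proof, and the recursive definition of $\operatorname{rem}(\mathbf{a})$ (with its threshold condition $a_l<2l-|\operatorname{rem}(\mathbf{a}[l-2])|-1$) is delicate enough that compatibility with the $B_{2k}$-operators is far from automatic. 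Until that is established, the dimension count and the matching of almost orthonormal index sets do not by themselves produce a $\mathbf{U}^\imath$-homomorphism with the prescribed $\equiv_\infty$ behavior.
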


\subsection{Row-insertion of type $A\mathrm{II}$}
In this subsection, we fix $\nu \in \mathrm{Par}_{\leq n}$.

Based on the representation theoretical interpretation of the row-insertion (Theorem \ref{thm: sst and qg} \eqref{item: row ins, thm: sst and qg}), it is natural to define the row-insertion of type $A\mathrm{II}$ as follows.

\begin{defi}
  The \emph{row-insertion of type $A\mathrm{II}$} is the procedure which transforms a pair $(T,x) \in Sp\mathrm{T}_{2n}(\nu) \times [2n]$ into a new symplectic tableau $T \xleftarrow{A\mathrm{II}} x$ by
  \[
    T \xleftarrow{A\mathrm{II}} x := P(T \leftarrow x).
  \]
\end{defi}

\begin{ex}
  Let
  \[
    T := \begin{ytableau}
      1 & 1 & 2 & 3 & 3 \\
      3 & 3 & 4 & 8 \\
      6 & 6 & 8 \\
      8 & 8 & 9 \\
      9
    \end{ytableau}.
  \]
  Then,
  \[
    T \leftarrow 2 = \begin{ytableau}
      1 & 1 & 2 & *(lightgray) 2 & 3 \\
      3 & 3 & *(lightgray) 3 & 8 \\
      *(lightgray) 4 & 6 & 8 \\
      *(lightgray) 6 & 8 & 9 \\
      *(lightgray) 8 \\
      *(lightgray) 9
    \end{ytableau} \xrightarrow{P(\cdot)} \begin{ytableau}
      1 & 1 & 2 & 2 & 3 \\
      3 & 3 & 8 & 8 \\
      6 & 6 & 9 \\
      8 & 8 \\
      9
    \end{ytableau} = T \xleftarrow{A\mathrm{II}} 2.
  \]
\end{ex}

\begin{thm}\label{thm: row ins AII = ber row ins}
  The row-insertion of type $A\mathrm{II}$ coincides with the Berele row-insertion\rm{:}
  \[
    T \xleftarrow{A\mathrm{II}} x = T \xleftarrow{\mathrm{B}} x \quad \text{ for all } (T,x) \in Sp\mathrm{T}_{2n}(\nu) \times [2n].
  \]
\end{thm}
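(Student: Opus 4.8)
The plan is to compare the two algorithms by case analysis according to whether the Berele row-insertion $T \xleftarrow{\mathrm{B}} x$ results in a box being added (unbarred terminal row) or removed (barred terminal row), using the characterizations established in Lemma \ref{lem: ber row ins route} and the behaviour of $\operatorname{suc}$ and $P(\cdot)$. Fix $(T,x) \in Sp\mathrm{T}_{2n}(\nu) \times [2n]$, write $S := T \leftarrow x$, and let $(x_1,\dots,x_r)$ be the row-inserting letters for $(T,x)$. By definition $T \xleftarrow{A\mathrm{II}} x = P(S)$, so the goal is to show $P(S) = T \xleftarrow{\mathrm{B}} x$ in each case.

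First, suppose $x_i \geq 2i-1$ for all $i \in [r]$ (the ``additive'' case). By Lemma \ref{lem: symp-ness of row ins}\eqref{item 1 lem: symp-ness of row ins}, $S = T \leftarrow x$ is symplectic. By Lemma \ref{lem: symp suc}, $\operatorname{suc}(S) = S$, so the Littlewood--Richardson algorithm of type $A\mathrm{II}$ terminates immediately with $P(S) = S$. On the other hand, Lemma \ref{lem: ber row ins route}\eqref{item 1 lem: ber row ins route} gives $T \xleftarrow{\mathrm{B}} x = T \leftarrow x = S$. Hence the two coincide in this case, with essentially no work.

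The substantive case is $x_i < 2i-1$ for some $i \in [r]$ (the ``subtractive'' case); let $s$ be the least such index. I would carry this out as follows. By Lemma \ref{lem: symp-ness of row ins}\eqref{item 2 lem: symp-ness of row ins} we know $x_{s-1} = 2s-3$, $x_s = 2s-2$, $c_i = 1$ for $i \in [s,r]$, and $S$ is not symplectic. The key is to relate $\operatorname{suc}(S)$ — i.e.\ the first step of $P(\cdot)$ — to the punctured tableau $T'$ of shape $(\nu,\{(s-1,1)\})$ appearing in Lemma \ref{lem: ber row ins route}\eqref{item 2 lem: ber row ins route}, and then show that the full rectification $P(S)$ recovers $\operatorname{Rect}(T')$. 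Concretely: compute the first column $\mathbf{a}$ of $S$ and analyze $\operatorname{rem}(\mathbf{a})$; from the explicit description of $S = \operatorname{add}$-free insertion along column $1$ for rows $\geq s$ (Proposition \ref{prop: explicit desc row ins}\eqref{item: 7 prop: explicit desc row ins}), one sees that the recursion defining $\operatorname{rem}$ ``fires'' exactly at the pair $(x_{s-1},x_s) = (2s-3,2s-2)$ sitting in rows $s-1,s$ of the first column, so $\operatorname{red}(\mathbf{a})$ is obtained from $\mathbf{a}$ by deleting these two entries. Using Corollary \ref{cor: rect and col-word} (or Corollary \ref{cor: rect = rect rev rect} together with Proposition \ref{prop: row col}), identify $\operatorname{suc}(S)$ as the rectification of a punctured tableau with holes in the first column whose column-word is $\operatorname{red}(\mathbf{a})^{\mathrm{rev}} \cdot w_{\mathrm{col}}(\text{rest of }S)$, and check that this column-word is Knuth equivalent to $w_{\mathrm{row}}(T')$; then Propositions \ref{prop: row word and rect} and \ref{prop: row word equiv} force $\operatorname{suc}(S) = \operatorname{Rect}(T')$, hence a fortiori $P(S) = \operatorname{Rect}(\operatorname{Rect}(T')) = \operatorname{Rect}(T')$ since $\operatorname{Rect}(T')$ is already symplectic (Lemma \ref{lem: symp suc} applied in reverse, i.e.\ $\operatorname{suc}$ of a genuine tableau of partition shape with no ``bad'' first column is itself). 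Comparing with Lemma \ref{lem: ber row ins route}\eqref{item 2 lem: ber row ins route}, which says $T \xleftarrow{\mathrm{B}} x = \operatorname{Rect}(T')$, completes this case.

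The main obstacle I anticipate is the bookkeeping in the subtractive case: verifying that the recursion defining $\operatorname{rem}(\mathbf{a})$ detects precisely the entries $2s-3,2s-2$ in rows $s-1,s$ — and does \emph{not} fire spuriously higher up — requires carefully tracking the inequality $a_l < 2l - |\operatorname{rem}(\mathbf{a}[l-2])| - 1$ against the symplectic condition $T(i,1) \geq 2i-1$ and the row-insertion route data, and then matching the resulting column-word with $w_{\mathrm{row}}(T')$ up to Knuth equivalence. A secondary subtlety is ensuring $\operatorname{Rect}(T')$ is symplectic so that $P$ halts after one step (equivalently, that the shape change is by exactly one removable box), which should follow from Theorem \ref{thm: ber row ins} together with Lemma \ref{lem: symp suc}, but needs to be stated cleanly. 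Once these combinatorial identifications are in place, the theorem follows by assembling the two cases.
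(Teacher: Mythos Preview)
Your proposal is correct and follows essentially the same approach as the paper's proof: the same two-case split on whether $x_i \geq 2i-1$ for all $i$, the same use of Lemma~\ref{lem: symp suc} in the additive case, and in the subtractive case the same computation showing $\operatorname{rem}(\mathbf{a}) = (a_{s-1},a_s)$, followed by the identification $\operatorname{red}(\mathbf{a})^{\mathrm{rev}} \cdot w_{\mathrm{col}}(\text{rest}) = w_{\mathrm{col}}(T')$ and an appeal to Corollary~\ref{cor: rect and col-word} plus Proposition~\ref{prop: row word equiv} to conclude $\operatorname{suc}(S) = \operatorname{Rect}(T')$, which is symplectic so $P$ halts. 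The only cosmetic difference is that the paper states the column-word equality directly rather than phrasing it as a Knuth equivalence to be checked, and your anticipated ``obstacle'' about $\operatorname{rem}$ not firing spuriously is handled in the paper by the single line $a_i \geq 2i-3$ for $i > s$ (which, combined with $|\operatorname{rem}(\mathbf{a}[l-2])| = 2$ for $l \geq s+2$, makes the firing condition $a_l < 2l-3$ fail).
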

\begin{proof}
  Let $T \in Sp\mathrm{T}_{2n}(\nu)$ and $x \in [2n]$.
  Set $T' := T \leftarrow x$, $S := T \xleftarrow{\mathrm{B}} x$ and $U := T \xleftarrow{A\mathrm{II}} x$.
  Note that we have $U = P(T')$ by definition.
  Let $(c_1,\dots,c_r)$ and $(x_1,\dots,x_r)$ denote the row-insertion route and the row-inserting letters for $(T,x)$, respectively.

  First, suppose that $x_i \geq 2i-1$ for all $i \in [r]$.
  By Lemma \ref{lem: ber row ins route} \eqref{item 1 lem: ber row ins route}, we have
  \[
    S = T'.
  \]
  This implies that $T'$ is symplectic.
  Hence, Lemma \ref{lem: symp suc} implies that
  \[
    P(T') = T'.
  \]
  Therefore, the assertion follows.

  Next, suppose that $x_i < 2i-1$ for some $i \in [r]$.
  Let $s$ denote the minimum among those $i$.
  Let $S'$ denote the punctured tableau of shape $(\nu, \{(s-1,1)\})$ defined by
  \[
    S'(i,j) := \begin{cases}
      x_i & \text{ if } i \in [s-1] \text{ and } j = c_i, \\
      T(i,j) & \text{ otherwise},
    \end{cases}
  \]
  for each $(i,j) \in D(\nu,\{(s-1,1)\})$.
  By Lemma \ref{lem: ber row ins route} \eqref{item 2 lem: ber row ins route}, we have $S = \operatorname{Rect}(S')$.
  Then, Corollary \ref{cor: rect and col-word} implies that
  \begin{align}\label{eq: row S equiv col S'}
    w_\mathrm{row}(S) \equiv w_\mathrm{col}(S').
  \end{align}

  Now, let us compute $U$.
  Set $U' := \operatorname{suc}(T')$.
  Then, $w_\mathrm{row}(U') \equiv \operatorname{red}(\mathbf{a})^\mathrm{rev} \cdot w_\mathrm{col}(T'')$, where $\mathbf{a}$ denotes the first column of $T'$ and $T''$ the remaining part.
  By Lemma \ref{lem: symp-ness of row ins} \eqref{item 2 lem: symp-ness of row ins},we have
  \begin{itemize}
    \item $a_{s-1} = 2s-3$,
    \item $a_s = 2s-2$,
    \item $a_i \geq 2i-3$ for all $i > s$.
  \end{itemize}
  Hence, we see by the definition that
  \[
    \operatorname{rem}(\mathbf{a}) = (a_{s-1}, a_s).
  \]
  Therefore, $\operatorname{red}(\mathbf{a})^\mathrm{rev} \cdot w_\mathrm{col}(T'') = w_\mathrm{col}(S')$.
  By Proposition \ref{prop: row word equiv} and equation \eqref{eq: row S equiv col S'}, we obtain
  \[
    U' = S.
  \]
  This implies that $U'$ is symplectic, and hence, $U = U'$ by Lemma \ref{lem: symp suc}.
  Thus, we complete the proof.
\end{proof}

\begin{cor}
  Let $T \in Sp\mathrm{T}_{2n}(\nu)$ and $x \in [2n]$ such that $T' := T \leftarrow x$ is not symplectic.
  Let $s$ denote the minimum integer such that $T'(s,1) < 2s-1$, and $T''$ the punctured tableau obtained from $T'$ by replacing the $(s-1,1)$- and $(s,1)$-entry with holes.
  Then, we have
  \[
    T \xleftarrow{A\mathrm{II}} x = \operatorname{Rect}(T'').
  \]
\end{cor}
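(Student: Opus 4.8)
The plan is to derive this corollary directly from Theorem \ref{thm: row ins AII = ber row ins} together with the structural description of the Berele row-insertion provided by Lemma \ref{lem: ber row ins route} \eqref{item 2 lem: ber row ins route}. First I would observe that the hypothesis ``$T' = T \leftarrow x$ is not symplectic'' is precisely the situation described in Lemma \ref{lem: symp-ness of row ins} \eqref{item 2 lem: symp-ness of row ins}: writing $(x_1,\dots,x_r)$ for the row-inserting letters for $(T,x)$, there is some $i \in [r]$ with $x_i < 2i-1$, and if $s$ denotes the minimal such $i$, then Lemma \ref{lem: symp-ness of row ins} tells us $s \geq 2$, $x_{s-1} = 2s-3$, $x_s = 2s-2$, and $x_i = T(i-1,1)$ for all $i \in [s+1,r]$.

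Next I would identify the integer $s$ in the statement of the corollary with this minimal $s$. By Proposition \ref{prop: explicit desc row ins} \eqref{item: 7 prop: explicit desc row ins} and the fact (established in the proof of Lemma \ref{lem: symp-ness of row ins}) that $c_i = 1$ for all $i \in [s,r]$, one reads off the first column of $T'$: its entries are $T'(i,1) = T(i,1)$ for $i < s-1$, $T'(s-1,1) = x_{s-1} = 2s-3$, $T'(s,1) = x_s = 2s-2$, and $T'(i,1) = x_i = T(i-1,1) \geq 2(i-1)-1 = 2i-3$ for $i \in [s+1,r]$, while $T'(i,1) = T(i,1) \geq 2i-1$ for $i > r$. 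In particular $T'(s,1) = 2s-2 < 2s-1$, and for every $i < s$ one checks $T'(i,1) \geq 2i-1$ using that $x_i \geq 2i-1$ and $T(i,1) \geq 2i-1$; hence $s$ is indeed the minimum integer with $T'(s,1) < 2s-1$, matching the definition in the corollary.

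Then I would compare the two tableaux appearing in the corollary. The punctured tableau $T''$ of the corollary is obtained from $T'$ by puncturing the $(s-1,1)$- and $(s,1)$-entries. On the other hand, Lemma \ref{lem: ber row ins route} \eqref{item 2 lem: ber row ins route} describes $S := T \xleftarrow{\mathrm{B}} x$ as $\operatorname{Rect}(T')$, where $T'$ there (I will call it $\widetilde{T}$ to avoid collision) is the punctured tableau of shape $(\nu, \{(s-1,1)\})$ with $\widetilde{T}(i,j) = x_i$ for $i \in [s-1]$, $j = c_i$, and $\widetilde{T}(i,j) = T(i,j)$ otherwise. The key reconciliation: applying $\operatorname{Rect}$ to $T''$ first performs the slide of the hole at $(s,1)$; since $T'(s,1) = 2s-2$ lies directly above (or equal relationship with) the neighbouring entries, and more to the point, since the column entries below are $\geq 2i-2$, this initial slide can be analysed via Proposition \ref{prop: sl route} and will end up restoring $\widetilde{T}$ up to the already-rectified portion — concretely, one shows $w_\mathrm{row}(T'') \equiv w_\mathrm{row}(\widetilde{T})$ by applying Proposition \ref{prop: row word and rect}, since both have the same column-word reading through the first column (the two punctured entries sitting at $(s-1,1)$ and $(s,1)$ in $T''$ versus a single puncture at $(s-1,1)$ in $\widetilde{T}$ with the $s$-th row having been bumped). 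Actually the cleanest route is: by Corollary \ref{cor: rect and col-word}, $w_\mathrm{row}(\operatorname{Rect}(T'')) \equiv w_\mathrm{col}(T'')$, and one verifies directly that $w_\mathrm{col}(T'') = \operatorname{red}(\mathbf{a})^\mathrm{rev} \cdot w_\mathrm{col}(S_0)$ where $\mathbf{a}$ is the first column of $T'$ and $S_0$ its remainder — which is exactly the word whose rectification is $\operatorname{suc}(T') = P(T')$ (using, as in the proof of Theorem \ref{thm: row ins AII = ber row ins}, that $\operatorname{rem}(\mathbf{a}) = (a_{s-1},a_s)$). Hence $\operatorname{Rect}(T'') = P(T') = T \xleftarrow{A\mathrm{II}} x$ by Proposition \ref{prop: row word equiv}.

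The main obstacle I anticipate is bookkeeping the holes correctly: the corollary punctures \emph{two} boxes (rows $s-1$ and $s$ of the first column), whereas Lemma \ref{lem: ber row ins route} works with a tableau having only \emph{one} hole. The resolution is that in $T''$ the two holes lie in the first column, so Corollary \ref{cor: rect and col-word} applies and the rectification is governed entirely by the column-word; puncturing $(s,1)$ in addition to $(s-1,1)$ simply deletes the entry $2s-2 = a_s$ from the column reading, which is exactly what $\operatorname{red}(\mathbf{a})$ does since $\operatorname{rem}(\mathbf{a}) = (a_{s-1}, a_s)$ removes \emph{both} $a_{s-1}$ and $a_s$ but $a_{s-1}$ is already a hole. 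Wait — I need to be careful here: $\operatorname{rem}(\mathbf{a})$ removes both $a_{s-1}$ and $a_s$, while $T''$ punctures both, so the column-words genuinely agree; and this is consistent because in $\widetilde{T}$ only $(s-1,1)$ is a hole but the $s$-th row has been shifted left by the sliding so that $a_s = 2s-2$ no longer appears in the first column. Thus the correct statement to prove is simply $w_\mathrm{col}(T'') = \operatorname{red}(\mathbf{a})^{\mathrm{rev}} \cdot w_\mathrm{col}(S_0)$, and then invoke Corollary \ref{cor: rect and col-word}, the definition of $\operatorname{suc}$, Lemma \ref{lem: symp suc}, and Proposition \ref{prop: row word equiv} to conclude $\operatorname{Rect}(T'') = \operatorname{suc}(T') = P(T')$, which equals $T \xleftarrow{A\mathrm{II}} x$ by Theorem \ref{thm: row ins AII = ber row ins}.
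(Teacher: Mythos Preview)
Your approach is correct and is essentially the paper's own argument: the paper's proof is the single sentence ``The assertion follows from the proof of Theorem \ref{thm: row ins AII = ber row ins},'' and what you have written is precisely an unpacking of that proof in the present setting. The key identity $w_\mathrm{col}(T'') = \operatorname{red}(\mathbf{a})^{\mathrm{rev}} \cdot w_\mathrm{col}(S_0)$ (with $\operatorname{rem}(\mathbf{a}) = (a_{s-1},a_s)$) together with Corollary \ref{cor: rect and col-word} and Proposition \ref{prop: row word equiv} gives $\operatorname{Rect}(T'') = \operatorname{suc}(T')$, exactly as in that proof.

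Two small corrections to your final sentence. First, the equality $P(T') = T \xleftarrow{A\mathrm{II}} x$ holds \emph{by definition} of the row-insertion of type $A\mathrm{II}$, not by Theorem \ref{thm: row ins AII = ber row ins}. Second, Lemma \ref{lem: symp suc} alone does not give $\operatorname{suc}(T') = P(T')$; it only says a tableau is symplectic iff it is fixed by $\operatorname{suc}$. You still need to know that $\operatorname{suc}(T')$ is symplectic, and this is exactly what the proof of Theorem \ref{thm: row ins AII = ber row ins} establishes by identifying $\operatorname{suc}(T')$ with the Berele output $S = T \xleftarrow{\mathrm{B}} x$ (which is symplectic by construction). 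So cite the proof of that theorem, not just its statement, for this step.
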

\begin{proof}
  The assertion follows from the proof of Theorem \ref{thm: row ins AII = ber row ins}.
\end{proof}

\section{Applications}\label{sect: app}
In this section, we use Theorem \ref{thm: row ins AII = ber row ins} to lift the Berele RS correspondence and Kobayashi--Matsumura RSK correspondence to isomorphisms of $\mathbf{U}^\imath$-modules.
Also, we establish the dual RSK correspondence of type $A\mathrm{II}$.

In this section, we fix $\nu \in \mathrm{Par}_{\leq n}$.

\subsection{Berele Robinson--Schensted correspondence}
\begin{prop}\label{prop: qua Ber row ins}
  There exists a $\mathbf{U}^\imath$-module isomorphism
  \[
    \mathrm{RI}^{A\mathrm{II}}: V^\imath(\nu) \otimes V(1) \to \bigoplus_{\substack{\xi \in \mathrm{Par}_{\leq n} \\ \xi \subset \nu, \ |\nu/\xi| = 1}} V^\imath(\xi) \oplus \bigoplus_{\substack{\xi \in \mathrm{Par}_{\leq n} \\ \xi \supset \nu, \ |\xi/\nu| = 1}} V^\imath(\xi)
  \]
  such that
  \[
    \mathrm{RI}^{A\mathrm{II}}(b^\imath_T \otimes b_x) \equiv_\infty b^\imath_{T \xleftarrow{A\mathrm{II}} x} \quad \text{ for all } (T,x) \in Sp\mathrm{T}_{2n}(\nu) \times [2n].
  \]
\end{prop}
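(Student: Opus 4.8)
The plan is to build $\mathrm{RI}^{A\mathrm{II}}$ by combining the known $\mathbf{U}$-isomorphism $\mathrm{RI}$ of Theorem \ref{thm: sst and qg} \eqref{item: row ins, thm: sst and qg} with the $\mathbf{U}^\imath$-homomorphisms relating $V(\nu)$ and $V^\imath(\nu)$ from Propositions \ref{prop: biT} and \ref{prop: forget}, together with the quantized Littlewood--Richardson map $\mathrm{LR}$ of Theorem \ref{thm: qua lr}. Concretely, first I would form the composite
\[
  V^\imath(\nu) \otimes V(1) \xrightarrow{\mathrm{For} \otimes \mathrm{id}} V(\nu) \otimes V(1) \xrightarrow{\mathrm{RI}} \bigoplus_{\substack{\mu \in \mathrm{Par}_{\leq 2n} \\ \mu \supset \nu,\ |\mu/\nu| = 1}} V(\mu) \xrightarrow{\bigoplus \mathrm{LR}} \bigoplus_{\mu} \bigoplus_{\substack{\xi \in \mathrm{Par}_{\leq n} \\ \xi \subseteq \mu}} (V^\imath(\xi) \otimes \mathbb{Q}(q)\mathrm{Rec}_{2n}(\mu/\xi)).
\]
Each arrow is a $\mathbf{U}^\imath$-module homomorphism ($\mathrm{For}$ by Proposition \ref{prop: forget}, $\mathrm{RI}$ by restriction from $\mathbf{U}$ to $\mathbf{U}^\imath$ since $\mathbf{U}^\imath$ is a right coideal and $V(1)$ is a $\mathbf{U}$-module, and $\mathrm{LR}$ by Theorem \ref{thm: qua lr}), so the composite is a $\mathbf{U}^\imath$-homomorphism. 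I would then track a basis element $b^\imath_T \otimes b_x$ with $T \in Sp\mathrm{T}_{2n}(\nu)$, $x \in [2n]$ through the composite using the $\equiv_\infty$ congruences: $\mathrm{For}(b^\imath_T) \equiv_\infty b_T$, then $\mathrm{RI}(b_T \otimes b_x) \equiv_\infty b_{T \leftarrow x}$, then $\mathrm{LR}(b_{T \leftarrow x}) \equiv_\infty b^\imath_{P(T \leftarrow x)} \otimes Q(T \leftarrow x) = b^\imath_{T \xleftarrow{A\mathrm{II}} x} \otimes Q(T \leftarrow x)$.

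The next step is to identify the image and show it is (up to the $\equiv_\infty$ identification) the right-hand side of the claimed isomorphism, i.e.\ that only the multiplicity spaces indexed by $\xi$ with $\xi \subset \nu$, $|\nu/\xi|=1$ or $\xi \supset \nu$, $|\xi/\nu|=1$ actually occur and that each such $\xi$ occurs with multiplicity one. For this I would invoke Theorem \ref{thm: ber row ins}: the Berele row-insertion is a bijection from $Sp\mathrm{T}_{2n}(\nu) \times [2n]$ onto the disjoint union of $Sp\mathrm{T}_{2n}(\xi)$ over exactly those $\xi$, and by Theorem \ref{thm: row ins AII = ber row ins} we have $T \xleftarrow{A\mathrm{II}} x = T \xleftarrow{\mathrm{B}} x$. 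Comparing dimensions: $\dim(V^\imath(\nu) \otimes V(1)) = |Sp\mathrm{T}_{2n}(\nu)| \cdot 2n = \sum_\xi |Sp\mathrm{T}_{2n}(\xi)| = \dim\bigl(\bigoplus_\xi V^\imath(\xi)\bigr)$, the sum being over the admissible $\xi$. So it suffices to show the composite above, after projecting onto the sum of those $V^\imath(\xi)$ (tensored with the one-dimensional recording spaces, which I would argue are forced to be one-dimensional for the relevant $(\mu,\xi)$ pairs), is an isomorphism; then the recording tensor factors can be suppressed and we get the stated target.

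The cleanest way to finish is a triangularity/change-of-basis argument: let $\Phi$ denote the composite $\mathbf{U}^\imath$-homomorphism landing in $\bigoplus_\xi V^\imath(\xi)$ (after the identification of each relevant $\mathbb{Q}(q)\mathrm{Rec}_{2n}(\mu/\xi)$ with $\mathbb{Q}(q)$). The computation above shows $\Phi(b^\imath_T \otimes b_x) \equiv_\infty b^\imath_{T \xleftarrow{A\mathrm{II}} x}$ for all $(T,x)$. Since $\{b^\imath_T \otimes b_x\}$ and $\{b^\imath_S : S \in Sp\mathrm{T}_{2n}(\xi),\ \xi \text{ admissible}\}$ are almost orthonormal bases of the source and target respectively, and since $(T,x) \mapsto T \xleftarrow{A\mathrm{II}} x = T \xleftarrow{\mathrm{B}} x$ is a bijection between the indexing sets (Theorems \ref{thm: ber row ins} and \ref{thm: row ins AII = ber row ins}), the matrix of $\Phi$ with respect to these bases is congruent to a permutation matrix modulo $q^{-1}\mathbf{A}_\infty$; hence its determinant is a unit in $\mathbf{A}_\infty$ and in particular nonzero, so $\Phi$ is an isomorphism. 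Setting $\mathrm{RI}^{A\mathrm{II}} := \Phi$ gives the desired map with $\mathrm{RI}^{A\mathrm{II}}(b^\imath_T \otimes b_x) \equiv_\infty b^\imath_{T \xleftarrow{A\mathrm{II}} x}$. The main obstacle I anticipate is the bookkeeping of the recording spaces $\mathbb{Q}(q)\mathrm{Rec}_{2n}(\mu/\xi)$: one must check that for each $\mu \supset \nu$ with $|\mu/\nu|=1$ and each admissible $\xi$, exactly one recording tableau arises (so these factors are genuinely one-dimensional and can be dropped), and that no "extra" $\xi$ outside the admissible set contributes — both of which should follow from the explicit description of $\mathrm{Rec}_{2n}$ referenced after Definition \ref{def: lr AII} combined with the dimension count, but need to be spelled out carefully.
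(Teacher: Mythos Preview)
Your proposal is correct and follows essentially the same route as the paper: both compose $\mathrm{For} \otimes \mathrm{id}$, $\mathrm{RI}$, and $\mathrm{LR}$, track $b^\imath_T \otimes b_x$ through the composite modulo $\equiv_\infty$, and conclude that the resulting homomorphism is an isomorphism from the Berele bijection (Theorems \ref{thm: ber row ins} and \ref{thm: row ins AII = ber row ins}). The paper sidesteps your recording-space bookkeeping by simply following $\mathrm{LR}$ with a projection $\sum_\xi \pi_\xi$ onto $\bigoplus_{\xi} V^\imath(\xi)$, and leaves your triangularity/determinant step implicit in the sentence ``Hence, the assertion follows,'' but the two arguments are otherwise the same.
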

\begin{proof}
  By Proposition \ref{prop: forget}, Theorem \ref{thm: sst and qg} \eqref{item: row ins, thm: sst and qg}, and Theorem \ref{thm: qua lr}, we can consider the following composition of $\mathbf{U}^\imath$-module homomorphisms:
  \begin{align*}
    V^\imath(\nu) \otimes V(1) &\xrightarrow{\mathrm{For} \otimes 1} V(\nu) \otimes V(1) \\
    &\xrightarrow{\mathrm{RI}} \bigoplus_{\substack{\mu \in \mathrm{Par}_{\leq 2n} \\ \mu \supset \nu,\ |\mu/\nu| = 1}} V(\mu) \\
    &\xrightarrow{\mathrm{LR}} \bigoplus_{\substack{\mu \in \mathrm{Par}_{\leq 2n} \\ \mu \supset \nu,\ |\mu/\nu| = 1}} \bigoplus_{\substack{\xi \in \mathrm{Par}_{\leq n} \\ \xi \subseteq \mu}} (V^\imath(\xi) \otimes \mathbb{Q}(q) \mathrm{Rec}_{2n}(\mu/\xi)) \\
    &\xrightarrow{\sum_\xi \pi_\xi} \bigoplus_{\xi \in \mathrm{Par}_{\leq n}} V^\imath(\xi),
  \end{align*}
  where the last homomorphism denotes the sum of projections $\pi_\xi$ onto $V(\xi)$.
  For each $T \in Sp\mathrm{T}_{2n}(\nu)$ and $x \in [2n]$, the element $b^\imath_T \otimes b_x$ is transformed by this homomorphism as follows modulo the equivalence relations $\equiv_\infty$ at each step:
  \[
    b^\imath_T \otimes b_x \mapsto b_T \otimes b_x \mapsto b_{T \leftarrow x} \mapsto b^\imath_{P(T \leftarrow x)} \otimes Q(T \leftarrow x) \mapsto b^\imath_{P(T \leftarrow x)} = b^\imath_{T \xleftarrow{A\mathrm{II}} x} = b^\imath_{T \xleftarrow{\mathrm{B}} x};
  \]
  the last equality follows from Theorem \ref{thm: row ins AII = ber row ins}.
  This assignment gives rise to the bijection in Theorem \ref{thm: ber row ins}.
  Hence, the assertion follows.
\end{proof}

\begin{defi}[{\cite[Definition 8.1]{Sun86}}; see also {\cite[Lemma 1]{Ber86}}]\label{def: ot}
  Let $\xi \in \mathrm{Par}_{\leq n}$ and $N \in \mathbb{Z}_{\geq 0}$.
  An \emph{oscillating tableau}, of shape $(\nu,\xi)$, rank $n$, and size $N$ is a sequence
  \[
    (\nu^0,\nu^1,\dots,\nu^N)
  \]
  of partitions in $\mathrm{Par}_{\leq n}$ satisfying the following:
  \begin{enumerate}
    \item $\nu^0 = \nu$, $\nu^N = \xi$,
    \item either $\nu^{i-1} \subset \nu^i$ or $\nu^{i-1} \supset \nu^i$ for all $i \in [N]$.
    \item $|\nu^i|-|\nu^{i-1}| = \pm 1$ for all $i \in [N]$,
  \end{enumerate}
  The set of oscillating tableaux of shape $(\nu,\xi)$, rank $n$, and size $N$ is denoted by $\mathrm{OT}_{n,N}(\nu,\xi)$.
\end{defi}

For each $T \in Sp\mathrm{T}_{2n}(\nu)$, $N \in \mathbb{Z}_{\geq 0}$, and $x_1,\dots,x_N \in [2n]$, set
\begin{align*}
  P(T,x_1,\dots,x_N) := \begin{cases}
    T & \text{ if } N = 0, \\
    P(T,x_1,\dots,x_{N-1}) \xleftarrow{A\mathrm{II}} x_N & \text{ if } N > 0,
  \end{cases}
\end{align*}
and $Q(T,x_1,\dots,x_N)$ to be the sequence $(\nu^0,\nu^1,\dots,\nu^N)$ of partitions defined by
\[
  \nu^i := \operatorname{sh}(P(T,x_1,\dots,x_i)) \quad \text{ for each } i \in [0,N].
\]

\begin{thm}\label{thm: rs aii}
  Let $N \in \mathbb{Z}_{\geq 0}$.
  \begin{enumerate}
    \item\label{item: comb, thm: rs aii} The assignment $(T,x_1,\dots,x_N) \mapsto (P(T,x_1,\dots,x_N),Q(T,x_1,\dots,x_N))$ gives rise to a bijection 
    \[
      \mathrm{RS}^\mathrm{B} : Sp\mathrm{T}_{2n}(\nu) \times [2n]^N \to \bigsqcup_{\xi \in \mathrm{Par}_{\leq n}} (Sp\mathrm{T}_{2n}(\xi) \times \mathrm{OT}_{n,N}(\nu,\xi)).
    \]
    \item\label{item: mod, thm: rs aii} There exists a $\mathbf{U}^\imath$-module isomorphism
    \[
      \mathrm{RS}^{A\mathrm{II}} : V^\imath(\nu) \otimes V(1)^{\otimes N} \to \bigoplus_{\xi \in \mathrm{Par}_{\leq n}} (V^\imath(\xi) \otimes \mathbb{Q}(q) \mathrm{OT}_{n,N}(\nu,\xi))
    \]
    such that
    \[
      \mathrm{RS}^{A\mathrm{II}}(b^\imath_T \otimes b_w) \equiv_\infty b^\imath_{P(T,x_1,\dots,x_N)} \otimes Q(T,x_1,\dots,x_N)
    \]
    for all $T \in Sp\mathrm{T}_{2n}(\nu)$, $x_1,\dots,x_N \in [2n]$.
  \end{enumerate}
\end{thm}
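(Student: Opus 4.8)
The plan is to obtain both assertions by iterating Proposition \ref{prop: qua Ber row ins} (equivalently its combinatorial shadow Theorem \ref{thm: ber row ins}) $N$ times, and then reorganizing the bookkeeping data into an oscillating tableau. For part \eqref{item: comb, thm: rs aii}, I would argue by induction on $N$. The base case $N=0$ is the identity bijection $Sp\mathrm{T}_{2n}(\nu) \to Sp\mathrm{T}_{2n}(\nu) \times \mathrm{OT}_{n,0}(\nu,\nu)$, since $\mathrm{OT}_{n,0}(\nu,\nu)$ has a single element, the one-term sequence $(\nu)$. For the inductive step, I would decompose an input $(T,x_1,\dots,x_N)$ as $((T,x_1,\dots,x_{N-1}), x_N)$, apply the inductive bijection to the first component to get $(P', Q')$ with $P' \in Sp\mathrm{T}_{2n}(\mu)$ and $Q' = (\nu^0,\dots,\nu^{N-1}) \in \mathrm{OT}_{n,N-1}(\nu,\mu)$, then apply the Berele row-insertion bijection of Theorem \ref{thm: ber row ins} to $(P', x_N)$, obtaining a symplectic tableau $P'' \in Sp\mathrm{T}_{2n}(\xi)$ with $\xi$ obtained from $\mu$ by adding or removing one box. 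Appending $\xi = \nu^N$ to the sequence $Q'$ gives an element of $\mathrm{OT}_{n,N}(\nu,\xi)$: conditions (1)--(3) of Definition \ref{def: ot} are exactly the statement that each step changes the partition by one box, which is guaranteed by Theorem \ref{thm: ber row ins} together with the inductive hypothesis. The map is a bijection because at each step both constituent maps are bijections and the last entry $\xi$ of the oscillating tableau records precisely which target block of Theorem \ref{thm: ber row ins} was hit, so the whole procedure is invertible step by step; one recovers $x_N$ and $P'$ from $(P'', \nu^N, \nu^{N-1})$ via the inverse of the Berele bijection, and then recurses.

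For part \eqref{item: mod, thm: rs aii}, I would likewise proceed by induction on $N$, building $\mathrm{RS}^{A\mathrm{II}}$ as an $N$-fold composition of maps obtained from Proposition \ref{prop: qua Ber row ins}. Concretely, having an isomorphism $V^\imath(\nu) \otimes V(1)^{\otimes (N-1)} \xrightarrow{\sim} \bigoplus_{\mu} V^\imath(\mu) \otimes \mathbb{Q}(q)\mathrm{OT}_{n,N-1}(\nu,\mu)$, I tensor with $V(1)$ on the right (which is legitimate since $\mathbf{U}^\imath$ is a right coideal of $\mathbf{U}$), and apply $\mathrm{RI}^{A\mathrm{II}} \otimes 1$ to each summand $V^\imath(\mu) \otimes V(1) \otimes \mathbb{Q}(q)\mathrm{OT}_{n,N-1}(\nu,\mu)$; after reindexing the direct sum by the new shape $\xi$ and the extended sequence, one gets the desired isomorphism onto $\bigoplus_{\xi} V^\imath(\xi) \otimes \mathbb{Q}(q)\mathrm{OT}_{n,N}(\nu,\xi)$. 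The $\equiv_\infty$-compatibility statement follows by tracking $b^\imath_T \otimes b_{x_1} \otimes \cdots \otimes b_{x_N}$ through this composition: at the $i$-th stage Proposition \ref{prop: qua Ber row ins} gives $b^\imath_{P^{(i-1)}} \otimes b_{x_i} \mapsto b^\imath_{P^{(i-1)} \xleftarrow{A\mathrm{II}} x_i}$ up to $\equiv_\infty$, and since $\equiv_\infty$ is compatible with tensoring by a fixed almost orthonormal basis element and with the almost-orthonormal direct-sum structure, the errors propagate harmlessly. The oscillating-tableau label produced at stage $i$ is $\nu^i = \operatorname{sh}(P^{(i)}) = \operatorname{sh}(P(T,x_1,\dots,x_i))$, matching the definition of $Q(T,x_1,\dots,x_N)$.

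The main technical point to handle carefully is the propagation of $\equiv_\infty$ through the iterated tensor construction: I need that if $u \equiv_\infty u'$ in a module $M$ with almost orthonormal basis $B_M$ and $b \in B_N$, then $u \otimes b \equiv_\infty u' \otimes b$ in $M \otimes N$, and similarly that applying a $\mathbf{U}^\imath$-module map which is itself "$\equiv_\infty$ on basis elements" to an element known only up to $\equiv_\infty$ still yields the right answer up to $\equiv_\infty$. These are exactly the formal properties set up in Section \ref{sect: row-ins aii} (the contragredient bilinear form on a tensor product is the product form, and $\equiv_\infty$ is defined via pairings against the almost orthonormal basis), so the argument is a routine unwinding of those definitions; the only care needed is that the direct-sum decompositions appearing at each stage are almost orthonormal, which holds because each $\mathrm{RI}^{A\mathrm{II}}$ is an isomorphism of modules carrying an almost orthonormal basis to the evident one on the target. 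A secondary bookkeeping obstacle is making the reindexing of the iterated direct sum precise — one must check that the map $(\mu, Q', \text{box move}) \mapsto (\xi, Q')$ with $Q'$ extended by $\xi$ is a bijection between the indexing sets of the two direct sums, which is the module-level echo of the combinatorial bijection from part \eqref{item: comb, thm: rs aii}. Neither step presents a genuine difficulty once Proposition \ref{prop: qua Ber row ins} and Theorem \ref{thm: ber row ins} are in hand; the content of the theorem is really just the statement that one may iterate, plus the observation that the sequence of intermediate shapes is precisely an oscillating tableau.
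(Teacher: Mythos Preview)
Your proposal is correct and follows essentially the same approach as the paper: the paper simply notes that part \eqref{item: comb, thm: rs aii} is the Berele RS correspondence of \cite[Theorem 2]{Ber86}, and that part \eqref{item: mod, thm: rs aii} follows by induction on $N$ from part \eqref{item: comb, thm: rs aii} together with Proposition \ref{prop: qua Ber row ins}. Your write-up supplies the inductive bookkeeping and the $\equiv_\infty$-propagation details that the paper leaves implicit, but the strategy is the same.
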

\begin{proof}
  The first assertion is essentially the Berele RS correspondence \cite[Theorem 2]{Ber86}.
  The second can be proved by induction on $N$ by using the first assertion and Proposition \ref{prop: qua Ber row ins}.
\end{proof}

\subsection{Kobayashi--Matsumura Robinson--Schensted--Knuth correspondence}
For each $l \in \mathbb{Z}_{\geq 0}$, let $\mathcal{W}_{2n}^\leq(l)$ denote the set of weakly increasing words of length $l$ with letters in $[2n]$.
We often identify $\mathrm{SST}_{2n}(l)$ with $\mathcal{W}_{2n}^\leq(l)$ via row-reading: $T \mapsto w_\mathrm{row}(T)$.

Let $l \in \mathbb{Z}_{\geq 0}$, $T \in Sp\mathrm{T}_{2n}(\nu)$, and $w = (x_1,\dots,x_l) \in \mathcal{W}_{2n}^\leq(l)$.
Set
\[
  P(T,w) := P(T,x_1,\dots,x_l).
\]
For each $i \in [l]$, let $r_i \in \mathcal{R}$ denote the terminal row of the Berele row-insertion for $(P(T,w[i-1]),x_i)$.
By Lemma \ref{lem: ber row ins} \eqref{item: row, lem: ber row ins}, it is weakly decreasing.
Hence, if $j$ denotes the number of barred letters in $(r_1,\dots,r_l)$ and if we set $\nu' := \operatorname{sh}(P(T,w[j]))$ and $\xi := \operatorname{sh}(P(T,w))$, then we have
\begin{align}\label{eq: hor osc cond}
  \nu \overset{\text{hor}}{\supseteq} \nu' \overset{\text{hor}}{\subseteq} \xi, \quad |\nu/\nu'| = j, \quad |\xi/\nu'| = l-j.
\end{align}
Set
\[
  Q(T,w) := (\nu,\nu',\xi).
\]

Conversely, given $\nu,\nu',\xi \in \mathrm{Par}_{\leq n}$ satisfying the condition~\eqref{eq: hor osc cond} and $S \in Sp\mathrm{T}_{2n}(\xi)$, one can find a unique pair $(T,w) \in Sp\mathrm{T}_{2n}(\nu) \times \mathcal{W}_{2n}^\leq(l)$ such that
\[
  P(T,w) = S, \quad Q(T,w) = (\nu,\nu',\xi).
\]

\begin{prop}\label{prop: pieri aii}
  Let $l \in \mathbb{Z}_{\geq 0}$.
  \begin{enumerate}
    \item The assignment $(T,w) \mapsto (P(T,w),Q(T,w))$ gives rise to a bijection
    \[
      Sp\mathrm{T}_{2n}(\nu) \times \mathcal{W}_{2n}^\leq(l) \to \bigsqcup_{\substack{\nu',\xi \in \mathrm{Par}_{\leq n} \\ \nu \overset{\text{hor}}{\supseteq} \nu' \overset{\text{hor}}{\subseteq} \xi \\ |\nu/\nu'| + |\xi/\nu'| = l}} (Sp\mathrm{T}_{2n}(\xi) \times \{ (\nu,\nu',\xi) \}).
    \]
    \item There exists a $\mathbf{U}^\imath$-module isomorphism
    \[
      V^\imath(\nu) \otimes V(l) \to \bigoplus _{\substack{\nu',\xi \in \mathrm{Par}_{\leq n} \\ \nu \overset{\text{hor}}{\supseteq} \nu' \overset{\text{hor}}{\subseteq} \xi \\ |\nu/\nu'| + |\xi/\nu'| = l}} (V^\imath(\xi) \otimes \mathbb{Q}(q) \{ (\nu,\nu',\xi) \})
    \]
    that sends $b^\imath_T \otimes b_w$ to $b^\imath_{P(T,w)} \otimes Q(T,w)$ modulo $\equiv_\infty$.
  \end{enumerate}
\end{prop}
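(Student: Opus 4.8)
The plan is to deduce both assertions from the Berele Robinson--Schensted correspondence (Theorem \ref{thm: rs aii}) by restricting the word input to a \emph{weakly increasing} word $w \in \mathcal{W}_{2n}^\leq(l)$, which is precisely the combinatorial shadow of the $\mathbf{U}$-module embedding $V(l) \hookrightarrow V(1)^{\otimes l}$ given by row-reading.

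For assertion (1), restrict the bijection $\mathrm{RS}^\mathrm{B}$ of Theorem \ref{thm: rs aii} \eqref{item: comb, thm: rs aii} to the subset $Sp\mathrm{T}_{2n}(\nu) \times \mathcal{W}_{2n}^\leq(l) \subseteq Sp\mathrm{T}_{2n}(\nu) \times [2n]^l$. For a weakly increasing word, Lemma \ref{lem: ber row ins} \eqref{item: row, lem: ber row ins} forces the sequence $(r_1,\dots,r_l)$ of terminal rows to be weakly decreasing in $\mathcal{R}$; since barred elements dominate unbarred ones, the barred terminal rows occupy an initial segment $r_1,\dots,r_j$. Consequently the oscillating tableau $Q(T,w) = (\nu^0,\dots,\nu^l)$ first shrinks from $\nu^0 = \nu$ to $\nu^j = \nu'$ and then grows from $\nu'$ to $\nu^l = \xi$. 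Using Lemma \ref{lem: ber row ins route} (which identifies each barred step with the removal of a box and each unbarred step with an ordinary Schensted insertion) together with the Pieri behaviour of the Schensted insertion of a weakly increasing word \cite[\S 1.1]{Ful97}, one checks that $\nu/\nu'$ and $\xi/\nu'$ are horizontal strips and that, because the removal rows (resp.\ insertion rows) are weakly increasing (resp.\ weakly decreasing), the intermediate shapes $\nu^1,\dots,\nu^{l-1}$ are uniquely reconstructible from the triple $(\nu,\nu',\xi)$. Thus the restriction of $\mathrm{RS}^\mathrm{B}$ identifies $Sp\mathrm{T}_{2n}(\nu) \times \mathcal{W}_{2n}^\leq(l)$ with the stated disjoint union, the map being the one described before the proposition; this is the Pieri part of the Kobayashi--Matsumura correspondence \cite{KoMa25}.

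For assertion (2), let $\iota := \mathrm{RR} : V(l) \to V(1)^{\otimes l}$ be the injective $\mathbf{U}$-module homomorphism of Theorem \ref{thm: sst and qg} \eqref{item: row word, thm: sst and qg} applied to $\lambda = (l)$; under the identification $\mathrm{SST}_{2n}(l) = \mathcal{W}_{2n}^\leq(l)$ it satisfies $\iota(b_w) \equiv_\infty b_{x_1} \otimes \cdots \otimes b_{x_l}$ for $w = (x_1,\dots,x_l)$. Then $\mathrm{id} \otimes \iota$ is an injective $\mathbf{U}^\imath$-module homomorphism $V^\imath(\nu) \otimes V(l) \to V^\imath(\nu) \otimes V(1)^{\otimes l}$, and composing it with $\mathrm{RS}^{A\mathrm{II}}$ from Theorem \ref{thm: rs aii} \eqref{item: mod, thm: rs aii} (which respects $\equiv_\infty$, being a $\mathbf{U}^\imath$-isomorphism carrying an almost orthonormal basis to one modulo $q^{-1}\mathbf{A}_\infty$) yields an injective $\mathbf{U}^\imath$-module homomorphism
\[
  \Phi : V^\imath(\nu) \otimes V(l) \to \bigoplus_{\xi \in \mathrm{Par}_{\leq n}} (V^\imath(\xi) \otimes \mathbb{Q}(q) \mathrm{OT}_{n,l}(\nu,\xi))
\]
with $\Phi(b^\imath_T \otimes b_w) \equiv_\infty b^\imath_{P(T,w)} \otimes Q(T,w)$ for all $T \in Sp\mathrm{T}_{2n}(\nu)$, $w \in \mathcal{W}_{2n}^\leq(l)$. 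By assertion (1), as $(T,w)$ ranges over $Sp\mathrm{T}_{2n}(\nu) \times \mathcal{W}_{2n}^\leq(l)$ the pairs $(P(T,w),Q(T,w))$ range bijectively over $\bigsqcup (Sp\mathrm{T}_{2n}(\xi) \times \{(\nu,\nu',\xi)\})$; hence the elements $\Phi(b^\imath_T \otimes b_w)$ are, modulo $q^{-1}\mathbf{A}_\infty$, exactly the almost orthonormal basis vectors $b^\imath_S \otimes (\nu,\nu',\xi)$ of the $\mathbf{U}^\imath$-submodule $\bigoplus_{\nu',\xi} (V^\imath(\xi) \otimes \mathbb{Q}(q)\{(\nu,\nu',\xi)\})$ (a submodule since $\mathbf{U}^\imath$ acts trivially on the recording spaces). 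A standard triangularity argument, together with the equality of dimensions supplied by assertion (1), then shows that $\Phi$ maps $V^\imath(\nu) \otimes V(l)$ isomorphically onto this submodule, giving the desired isomorphism.

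The main obstacle is the purely combinatorial core of assertion (1): checking that, for a weakly increasing word, the Berele removal steps carve out a horizontal strip $\nu/\nu'$ and that the intermediate shapes of the oscillating tableau are forced by its endpoints. Once the monotonicity of the terminal rows from Lemma \ref{lem: ber row ins} is available this is a careful but elementary bookkeeping exercise --- essentially a type $A\mathrm{II}$ Pieri rule --- and everything else (the embedding $V(l) \hookrightarrow V(1)^{\otimes l}$, compatibility with $\equiv_\infty$, and the triangularity/dimension count) is routine.
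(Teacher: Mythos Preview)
Your proof is correct and follows essentially the same route as the paper: for (1) you restrict $\mathrm{RS}^\mathrm{B}$ to weakly increasing words and use Lemma~\ref{lem: ber row ins}\,\eqref{item: row, lem: ber row ins} to force the barred-then-unbarred pattern of terminal rows, and for (2) you compose $1 \otimes \mathrm{RR}$ with $\mathrm{RS}^{A\mathrm{II}}$ and then invoke (1) to identify the image. Your treatment is slightly more explicit than the paper's in two places---you spell out that the intermediate shapes of the oscillating tableau are reconstructible from the triple $(\nu,\nu',\xi)$, and you frame the final step as a triangularity-plus-dimension argument rather than simply asserting that ``the first assertion ensures that this map is an isomorphism''---but these are expansions of the same argument, not a different one.
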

\begin{proof}
  The first assertion follows from the argument above.
  In order to prove the second assertion, let us consider the following composition of $\mathbf{U}^\imath$-module homomorphisms in Theorems \ref{thm: sst and qg} \eqref{item: row word, thm: sst and qg} and \ref{thm: rs aii} \eqref{item: mod, thm: rs aii}:
  \begin{align*}
    V^\imath(\nu) \otimes V(l)
    &\xrightarrow{1 \otimes \mathrm{RR}} V^\imath(\nu) \otimes V(1)^{\otimes l} \\
    &\xrightarrow{\mathrm{RS}^{A\mathrm{II}}} \bigoplus_{\xi \in \mathrm{Par}_{\leq n}} (V^\imath(\xi) \otimes \mathbb{Q}(q) \mathrm{OT}_{n,l}(\nu,\xi)).
  \end{align*}
  For each $T \in Sp\mathrm{T}_{2n}(\nu)$ and $w = (x_1,\dots,x_l) \in \mathcal{W}_{2n}^\leq(l)$, the element $b^\imath_T \otimes b_w$ is transformed by this homomorphism as follows modulo the equivalence relations $\equiv_\infty$ at each step:
  \[
    b^\imath_T \otimes b_w \mapsto b^\imath_T \otimes b_{x_1} \otimes \cdots \otimes b_{x_l} \mapsto b^\imath_{P(T,x_1,\dots,x_l)} \otimes Q(T,x_1,\dots,x_l).
  \]
  Note that $P(T,x_1,\dots,x_l) = P(T,w)$ and that $Q(T,x_1,\dots,x_l)$ is of the form $(\nu^0,\nu^1,\dots,\nu^l)$ such that
  \[
    \nu^0 \overset{\text{hor}}{\supseteq} \nu^j \overset{\text{hor}}{\subseteq} \nu^l \quad \text{ for some } j \in [0,l].
  \]
  The triple $(\nu,\nu^j,\nu^l)$ is nothing but $Q(T,w)$.
  Hence, we obtain a $\mathbf{U}^\imath$-module homomorphism
  \[
    V^\imath(\nu) \otimes V(l) \to \bigoplus _{\substack{\nu',\xi \in \mathrm{Par}_{\leq n} \\ \nu \overset{\text{hor}}{\supseteq} \nu' \overset{\text{hor}}{\subseteq} \xi \\ |\nu/\nu'| + |\xi/\nu'| = l}} (V^\imath(\xi) \otimes \mathbb{Q}(q) \{ (\nu,\nu',\xi) \})
  \]
  which sends $b^\imath_T \otimes b_w$ to $b^\imath_{P(T,w)} \otimes Q(T,w)$ modulo $\equiv_\infty$.
  The first assertion ensures that this map is an isomorphism.
  Thus, we complete the proof.
\end{proof}

\begin{defi}[{\cite[Definition 4.7]{KoMa25}}]\label{def: csot}
  Let $\xi \in \mathrm{Par}_{\leq n}$.
  A \emph{column-strict oscillating tableau} of shape $(\nu,\xi)$, rank $n$, and depth $k$ is a sequence
  \[
    (\xi^0,\nu^1,\xi^1,\dots,\nu^k,\xi^k)
  \]
  of partitions in $\mathrm{Par}_{\leq n}$ satisfying the following:
  \begin{enumerate}
    \item $\xi^0 = \nu$, $\xi^k = \xi$,
    \item $\xi^{i-1} \overset{\text{hor}}{\supseteq} \nu^i \overset{\text{hor}}{\subseteq} \xi^i$ for all $i \in [k]$.
  \end{enumerate}
  Let $\mathrm{CSOT}_{n,k}(\nu,\xi)$ denote the set of column-strict oscillating tableaux of shape $(\nu,\xi)$, rank $n$, and depth $k$.
\end{defi}

\begin{rem}\label{rem: ssot}
  The notion of column-strict oscillating tableaux was introduced in \cite{KoMa25} under the name of \emph{semistandard oscillating tableaux}.
  As stated there, Lee \cite{Lee25b} had introduced the notion of semistandard oscillating tableaux in a different meaning.
  Hence, we give them different names which clarify their different roles in the present paper.
\end{rem}

\begin{defi}
  The content of $U \in \mathrm{CSOT}_{n,k}(\nu,\xi)$ is the sequence $c(U) := (l_1,\dots,l_k)$ defined as follows:
  if $U = (\xi^0,\nu^1,\xi^1,\dots,\nu^k,\xi^k)$, then
  \[
    l_i := |\xi^{i-1}/\nu^i| + |\xi^i/\nu^i| \quad \text{ for each } i \in [k].
  \]
\end{defi}

For each $T \in Sp\mathrm{T}_{2n}(\nu)$, $k \in \mathbb{Z}_{\geq 0}$, $l_1,\dots,l_k \in \mathbb{Z}_{\geq 0}$, and $w_i \in \mathcal{W}_{2n}^\leq(l_i)$ with $i \in [k]$, set
\[
  P(T,w_1,\dots,w_k) := \begin{cases}
    T & \text{ if } k = 0, \\
    P(P(T,w_1,\dots,w_{k-1}),w_k) & \text{ if } k > 0,
  \end{cases}
\]
and $Q(T,w_1,\dots,w_k)$ to be the sequence $(\xi^0,\nu^1,\xi^1,\dots,\nu^k,\xi^k)$ of partitions defined as follows:
\begin{itemize}
  \item $\xi^0 := \nu$,
  \item $(\xi^{i-1},\nu^i,\xi^i) = Q(P(T,w_1,\dots,w_{i-1}),w_i)$ for each $i \in [k]$.
\end{itemize}

\begin{thm}\label{thm: rsk aii}
  Let $k \in \mathbb{Z}_{\geq 0}$ and $l_1,\dots,l_k \in \mathbb{Z}_{\geq 0}$.
  \begin{enumerate}
    \item The assignment $(T,w_1,\dots,w_k) \mapsto (P(T,w_1,\dots,w_k),Q(T,w_1,\dots,w_k))$ gives rise to a bijection
    \begin{align*}
      \mathrm{RSK}^\mathrm{KM} : &Sp\mathrm{T}_{2n}(\nu) \times \mathcal{W}_{2n}^{\leq}(l_1) \times \cdots \times \mathcal{W}_{2n}^\leq(l_k) \\
      &\to \bigsqcup_{\xi \in \mathrm{Par}_{\leq n}} (Sp\mathrm{T}_{2n}(\xi) \times \{ U \in \mathrm{CSOT}_{n,k}(\nu,\xi) \mid c(U) = (l_1,\dots,l_k) \}).
    \end{align*}
    \item There exists a $\mathbf{U}^\imath$-module isomorphism
    \begin{align*}
      \mathrm{RSK}^{A\mathrm{II}} : &V^\imath(\nu) \otimes V(l_1) \otimes \cdots \otimes V(l_k) \\
      &\to \bigoplus_{\xi \in \mathrm{Par}_{\leq n}} V^\imath(\xi) \otimes \mathbb{Q}(q) \{ U \in \mathrm{CSOT}_{n,k}(\nu,\xi) \mid c(U) = (l_1,\dots,l_k) \}
    \end{align*}
    such that
    \[
      \mathrm{RSK}^{A\mathrm{II}}(b^\imath_T \otimes b_{w_1} \otimes \cdots \otimes b_{w_k}) \equiv_\infty b^\imath_{P(T,w_1,\dots,w_k)} \otimes Q(T,w_1,\dots,w_k)
    \]
    for all $T \in Sp\mathrm{T}_{2n}(\nu)$, $w_i \in \mathcal{W}_{2n}^\leq(l_i)$.
  \end{enumerate}
\end{thm}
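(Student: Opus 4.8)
The plan is to prove both assertions by induction on $k$, building everything from the Pieri-type results of Proposition~\ref{prop: pieri aii} (the case $k=1$). When $k = 0$ both statements are trivial: $\mathrm{CSOT}_{n,0}(\nu,\xi)$ is empty unless $\xi = \nu$, in which case it is the singleton $\{(\nu)\}$, and both maps are the identity on $V^\imath(\nu)$. For the inductive step one peels off the last word $w_k$, respectively the last tensor factor $V(l_k)$, and invokes the $k=1$ case (Proposition~\ref{prop: pieri aii}) for it.

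For the first assertion, assuming $\mathrm{RSK}^{\mathrm{KM}}$ has been constructed for $k-1$, I would set $P' := P(T,w_1,\dots,w_{k-1})$, $\zeta := \operatorname{sh}(P')$, and $U' := Q(T,w_1,\dots,w_{k-1})$, so that by the inductive hypothesis $U' \in \mathrm{CSOT}_{n,k-1}(\nu,\zeta)$ with $c(U') = (l_1,\dots,l_{k-1})$. Applying Proposition~\ref{prop: pieri aii}(1) to $(P',w_k)$ yields $P(T,w_1,\dots,w_k) = P(P',w_k)$ together with a triple $Q(P',w_k) = (\zeta,\nu^k,\xi)$ with $\zeta \overset{\text{hor}}{\supseteq} \nu^k \overset{\text{hor}}{\subseteq} \xi$ and $|\zeta/\nu^k| + |\xi/\nu^k| = l_k$, by~\eqref{eq: hor osc cond}. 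Appending $\nu^k,\xi$ to $U'$ then produces exactly $Q(T,w_1,\dots,w_k)$, which is a column-strict oscillating tableau of depth $k$ and content $(l_1,\dots,l_k)$. The inverse is assembled symmetrically: from $(\xi,S,U)$ one reads off the last two entries $\nu^k,\xi$ of $U$, recovers $(P',w_k)$ via the inverse of the $k=1$ Pieri bijection, and then recovers $(T,w_1,\dots,w_{k-1})$ from $(P',U')$ by the inductive hypothesis, $U'$ being $U$ with its last two entries removed. That this bijection coincides with the Kobayashi--Matsumura correspondence of \cite{KoMa25} follows from Theorem~\ref{thm: row ins AII = ber row ins}, which identifies iterated type $A\mathrm{II}$ row-insertion with iterated Berele row-insertion.

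For the second assertion, assuming $\mathrm{RSK}^{A\mathrm{II}}$ has been constructed for $k-1$, I would tensor that isomorphism with $\mathrm{id}_{V(l_k)}$; since the recording space in the target carries the trivial $\mathbf{U}$-action, I may then commute it past $V(l_k)$, and this is a $\mathbf{U}^\imath$-isomorphism because flipping a trivially acting tensor factor is $\mathbf{U}$-equivariant by the counit axiom and $\mathbf{U}^\imath$ is a right coideal. The outcome is a $\mathbf{U}^\imath$-isomorphism of $V^\imath(\nu) \otimes V(l_1) \otimes \cdots \otimes V(l_k)$ onto $\bigoplus_\zeta \bigl(V^\imath(\zeta) \otimes V(l_k)\bigr) \otimes \mathbb{Q}(q)\{ U' \in \mathrm{CSOT}_{n,k-1}(\nu,\zeta) \mid c(U') = (l_1,\dots,l_{k-1}) \}$, sending $b^\imath_T \otimes b_{w_1} \otimes \cdots \otimes b_{w_k}$, modulo $\equiv_\infty$, to $\bigl(b^\imath_{P(T,w_1,\dots,w_{k-1})} \otimes b_{w_k}\bigr) \otimes Q(T,w_1,\dots,w_{k-1})$. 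Next I would apply Proposition~\ref{prop: pieri aii}(2) to each summand $V^\imath(\zeta) \otimes V(l_k)$ and re-collect the recording data, obtaining a $\mathbf{U}^\imath$-homomorphism $\mathrm{RSK}^{A\mathrm{II}}$ as stated; tracking the basis vector through the composition exactly as in the proofs of Propositions~\ref{prop: qua Ber row ins} and~\ref{prop: pieri aii} gives $b^\imath_T \otimes b_{w_1} \otimes \cdots \otimes b_{w_k} \mapsto b^\imath_{P(T,w_1,\dots,w_k)} \otimes Q(T,w_1,\dots,w_k)$ modulo $\equiv_\infty$. Bijectivity from the first assertion then forces this homomorphism to be an isomorphism. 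Here one repeatedly uses that a $\mathbf{U}^\imath$-module homomorphism preserving contragredient bilinear forms respects $\equiv_\infty$, and that tensor products of almost orthonormal bases remain almost orthonormal, so the step-by-step equivalences chain together.

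I expect the only real difficulty to be the bookkeeping: verifying that the horizontal-strip conditions of~\eqref{eq: hor osc cond} glue into the defining conditions of $\mathrm{CSOT}_{n,k}(\nu,\xi)$, that the content $(l_1,\dots,l_k)$ is tracked correctly at each stage, and that the reorderings of the trivially acting recording factors are genuine $\mathbf{U}^\imath$-isomorphisms compatible with $\equiv_\infty$. No representation-theoretic input beyond Proposition~\ref{prop: pieri aii} and the formalism of $\equiv_\infty$ should be needed.
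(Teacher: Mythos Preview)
Your proposal is correct and follows the same approach the paper outlines: induction on $k$ using Proposition~\ref{prop: pieri aii}, with the combinatorial bijection of part~(1) supplying the identification of recording index sets needed to see that the composite $\mathbf{U}^\imath$-homomorphism lands in the stated target and is an isomorphism. The only cosmetic difference is that for part~(1) the paper simply cites \cite[Theorem~4.17]{KoMa25}, whereas you rebuild the bijection directly from the $k=1$ Pieri case---which is the same inductive argument spelled out.
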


\begin{proof}
  The first assertion is essentially the Kobayashi--Matsumura RSK-correspondence \cite[Theorem 4.17]{KoMa25}.
  The second can be proved by induction on $k$ and by using the first assertion and Proposition \ref{prop: pieri aii}.
\end{proof}

\subsection{Dual Robinson--Schensted--Knuth correspondence of type $A\mathrm{II}$}
For each $k \in [2n]$, let $(1^k)$ denote the partition consisting of $k$ $1$'s:
\[
  (1^k) := (\underbrace{1,\dots,1}_k).
\]
Also, let $\mathcal{W}_{2n}^>(k)$ denote the set of strictly decreasing words of length $k$ with letters in $[2n]$.
We often identify $\mathrm{SST}_{2n}(1^k)$ with $\mathcal{W}_{2n}^>(k)$ via row-reading: $T \mapsto w_\mathrm{row}(T)$.

Let $k \in [0,2n]$, $T \in Sp\mathrm{T}_{2n}(\nu)$, and $w = (x_1,\dots,x_k) \in \mathcal{W}_{2n}^>(k)$.
Set
\[
  P(T,w) := P(T,x_1,\dots,x_k).
\]
For each $i \in [k]$, let $r_i \in \mathcal{R}$ denote the terminal row of the Berele row-insertion for $(P(T,w[i-1]),x_i)$.
By Lemma \ref{lem: ber row ins} \eqref{item: col, lem: ber row ins}, it is strictly increasing.
Hence, if $j$ denotes the number of unbarred letters in $(r_1,\dots,r_k)$ and if we set $\nu' := \operatorname{sh}(P(T,w[j]))$ and $\xi := \operatorname{sh}(P(T,w))$, then we have
\begin{align}\label{eq: ver osc cond}
  \nu \overset{\text{ver}}{\subseteq} \nu' \overset{\text{ver}}{\supseteq} \xi, \quad |\nu'/\nu| = j, \quad |\nu'/\xi| = k-j.
\end{align}
Set
\[
  Q(T,w) := (\nu,\nu',\xi).
\]

Conversely, given partitions $\nu,\nu',\xi \in \mathrm{Par}_{\leq n}$ satisfying the condition~\eqref{eq: ver osc cond} and $S \in Sp\mathrm{T}_{2n}(\xi)$, one can find a unique pair $(T,w) \in Sp\mathrm{T}_{2n}(\nu) \times \mathcal{W}_{2n}^>(k)$ such that
\[
  P(T,w) = S, \quad Q(T,w) = (\nu,\nu',\xi).
\]

\begin{prop}\label{prop: dual pieri aii}
  Let $k \in [0,2n]$.
  \begin{enumerate}
    \item The assignment $(T,w) \mapsto (P(T,w),Q(T,w))$ gives rise to a bijection
    \[
      Sp\mathrm{T}_{2n}(\nu) \times \mathcal{W}_{2n}^>(k) \to \bigsqcup_{\substack{\nu',\xi \in \mathrm{Par}_{\leq n} \\ \nu \overset{\text{ver}}{\subseteq} \nu' \overset{\text{ver}}{\supseteq} \xi \\ |\nu'/\nu| + |\nu'/\xi| = k}} (Sp\mathrm{T}_{2n}(\xi) \times \{ (\nu,\nu',\xi) \}).
    \]
    \item There exists a $\mathbf{U}^\imath$-module isomorphism
    \[
      V^\imath(\nu) \otimes V(k) \to \bigoplus _{\substack{\nu',\xi \in \mathrm{Par}_{\leq n} \\ \nu \overset{\text{ver}}{\subseteq} \nu' \overset{\text{ver}}{\supseteq} \xi \\ |\nu'/\nu| + |\nu'/\xi| = k}} (V^\imath(\xi) \otimes \mathbb{Q}(q) \{ (\nu,\nu',\xi) \})
    \]
    that sends $b^\imath_T \otimes b_w$ to $b^\imath_{P(T,w)} \otimes Q(T,w)$ modulo $\equiv_\infty$.
  \end{enumerate}
\end{prop}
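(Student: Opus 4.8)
The plan is to transcribe, essentially line by line, the proof of Proposition~\ref{prop: pieri aii}, replacing weakly increasing words by strictly decreasing ones and the $\mathbf{U}$-module $V(l)$ by $V(1^k)$. The first assertion has in fact already been assembled in the discussion immediately preceding the statement: for $w=(x_1,\dots,x_k)\in\mathcal{W}_{2n}^>(k)$ the successive terminal rows $r_1,\dots,r_k$ of the Berele row-insertions are strictly increasing in $\mathcal{R}$ by Lemma~\ref{lem: ber row ins}\eqref{item: col, lem: ber row ins}, and since every unbarred element of $\mathcal{R}$ precedes every barred one, there is a $j\in[0,k]$ such that $r_1,\dots,r_j$ are unbarred and $r_{j+1},\dots,r_k$ are barred. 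Reading this through Lemma~\ref{lem: ber row ins route}, it says precisely that $\nu=\nu^0\subset\cdots\subset\nu^j$ and $\nu^j\supset\cdots\supset\nu^k=\xi$, each step adding (resp.\ removing) a single box in a strictly monotone sequence of rows, which is exactly the condition $\nu\overset{\text{ver}}{\subseteq}\nu'\overset{\text{ver}}{\supseteq}\xi$ of \eqref{eq: ver osc cond} with $\nu':=\nu^j$. The inverse assignment is the one already recorded before the statement, and rests only on the bijectivity of the Berele row-insertion (Theorem~\ref{thm: ber row ins}). This settles part~(1).

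For part~(2) I would form the composition of $\mathbf{U}^\imath$-module homomorphisms
\[
  V^\imath(\nu)\otimes V(1^k)\xrightarrow{\,1\otimes\mathrm{RR}\,}V^\imath(\nu)\otimes V(1)^{\otimes k}\xrightarrow{\,\mathrm{RS}^{A\mathrm{II}}\,}\bigoplus_{\xi\in\mathrm{Par}_{\leq n}}\bigl(V^\imath(\xi)\otimes\mathbb{Q}(q)\mathrm{OT}_{n,k}(\nu,\xi)\bigr),
\]
followed by the projection $\pi$ onto the direct summand indexed by the triples $(\nu,\nu',\xi)$ with $\nu\overset{\text{ver}}{\subseteq}\nu'\overset{\text{ver}}{\supseteq}\xi$ and $|\nu'/\nu|+|\nu'/\xi|=k$; this projection is a $\mathbf{U}^\imath$-module map because each $V^\imath(\xi)\otimes\mathbb{Q}(q)U$ attached to a single oscillating tableau $U$ is a $\mathbf{U}^\imath$-submodule. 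Here $\mathrm{RR}$ is the map of Theorem~\ref{thm: sst and qg}\eqref{item: row word, thm: sst and qg} for $\lambda=(1^k)$, and one uses that the row-word of a one-column semistandard tableau is strictly decreasing, so that $\mathrm{RR}(b_w)\equiv_\infty b_{x_1}\otimes\cdots\otimes b_{x_k}$ for $w=(x_1,\dots,x_k)\in\mathcal{W}_{2n}^>(k)$. Tracking a basis element through the composite, modulo $\equiv_\infty$ at each step, gives
\[
  b^\imath_T\otimes b_w\mapsto b^\imath_T\otimes b_{x_1}\otimes\cdots\otimes b_{x_k}\mapsto b^\imath_{P(T,x_1,\dots,x_k)}\otimes Q(T,x_1,\dots,x_k)\mapsto b^\imath_{P(T,w)}\otimes Q(T,w),
\]
the last arrow using that $Q(T,x_1,\dots,x_k)$ is already of the form $(\nu^0,\dots,\nu^j,\dots,\nu^k)$ with $(\nu,\nu^j,\nu^k)=Q(T,w)$, so $\pi$ fixes its leading term. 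By part~(1) the assignment $(T,w)\mapsto(P(T,w),Q(T,w))$ is a bijection onto $\bigsqcup_\xi Sp\mathrm{T}_{2n}(\xi)\times\{(\nu,\nu',\xi)\}$, so the composite sends the basis $\{b^\imath_T\otimes b_w\}$ of the source to a family whose $q=\infty$ leading terms are exactly the almost orthonormal basis $\{b^\imath_S\otimes(\nu,\nu',\xi)\}$ of the target; the usual unitriangularity argument (the transition matrix is a permutation matrix modulo $q^{-1}\mathbf{A}_\infty$, hence invertible over $\mathbb{Q}(q)$) then shows the composite is the desired isomorphism.

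Since the whole argument is a verbatim analogue of the proof of Proposition~\ref{prop: pieri aii}, I do not expect any genuinely new difficulty. The one point that must be verified with a little care is the one carried out in the first paragraph — that strictly decreasing input words produce precisely the oscillating tableaux satisfying \eqref{eq: ver osc cond} — and this is controlled entirely by Lemma~\ref{lem: ber row ins}\eqref{item: col, lem: ber row ins} together with the order relation $\text{unbarred}<\text{barred}$ on $\mathcal{R}$.
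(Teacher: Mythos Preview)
Your proof is correct and follows exactly the approach the paper intends---its own proof reads simply ``can be proved in a similar way to Proposition~\ref{prop: pieri aii}'', and you have carried out that transcription carefully, including the passage through $\mathrm{RR}$ and $\mathrm{RS}^{A\mathrm{II}}$ and the appeal to part~(1) for bijectivity. You also correctly write $V(1^k)$ in place of the $V(k)$ appearing in the displayed statement, which is evidently a typo (as confirmed by Theorem~\ref{thm: dual rsk aii}, which uses $V(1^{k_i})$, and by the identification $\mathrm{SST}_{2n}(1^k)\cong\mathcal{W}_{2n}^{>}(k)$ set up just before the proposition).
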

\begin{proof}
  The assertion can be proved in a similar way to Proposition \ref{prop: pieri aii}.
\end{proof}

\begin{defi}[{\cite[Definition 2.4]{Lee25b}}]\label{def: rsot}
  Let $\xi \in \mathrm{Par}_{\leq n}$.
  A \emph{row-strict oscillating tableau} of shape $(\nu,\xi)$, rank $n$, and width $l$ is a sequence
  \[
    (\xi^0,\nu^1,\xi^1,\dots,\nu^l,\xi^l)
  \]
  of partitions in $\mathrm{Par}_{\leq n}$ satisfying the following:
  \begin{enumerate}
    \item $\xi^0 = \nu$, $\xi^l = \xi$,
    \item $\xi^{i-1} \overset{\text{ver}}{\subseteq} \nu^i \overset{\text{ver}}{\supseteq} \xi^i$ for all $i \in [l]$.
  \end{enumerate}
  Let $\mathrm{RSOT}_{n,l}(\nu,\xi)$ denote the set of row-strict oscillating tableaux of shape $(\nu,\xi)$, rank $n$, and width $l$.
\end{defi}

\begin{rem}
  The notion of row-strict oscillating tableaux was introduced in \cite{Lee25b} under the name of \emph{semistandard oscillating tableaux}; see also Remark \ref{rem: ssot}.
\end{rem}

\begin{defi}
  The content of $U \in \mathrm{RSOT}_{n,l}(\nu,\xi)$ is the sequence $c(U) := (k_1,\dots,k_l)$ defined as follows:
  if $U = (\xi^0,\nu^1,\xi^1,\dots,\nu^l,\xi^l)$, then
  \[
    k_i := |\nu^i/\xi^{i-1}| + |\nu^i/\xi^i| \quad \text{ for each } i \in [l].
  \]
\end{defi}

For each $T \in Sp\mathrm{T}_{2n}(\nu)$, $l \in \mathbb{Z}_{\geq 0}$, $k_1,\dots,k_l \in [0,2n]$, and $w_i \in \mathcal{W}_{2n}^>(k_i)$ with $i \in [l]$, set
\[
  P(T,w_1,\dots,w_l) := \begin{cases}
    T & \text{ if } l = 0, \\
    P(P(T,w_1,\dots,w_{l-1}),w_l) & \text{ if } l > 0,
  \end{cases}
\]
and $Q(T,w_1,\dots,w_l)$ to be the sequence $(\xi^0,\nu^1,\xi^1,\dots,\nu^l,\xi^l)$ of partitions defined as follows:
\begin{itemize}
  \item $\xi^0 := \nu$,
  \item $(\xi^{i-1},\nu^i,\xi^i) = Q(P(T,w_1,\dots,w_{i-1}),w_i)$ for each $i \in [l]$.
\end{itemize}

\begin{thm}\label{thm: dual rsk aii}
  Let $l \in \mathbb{Z}_{\geq 0}$, and $k_1,\dots,k_l \in [0,2n]$.
  \begin{enumerate}
    \item\label{item: comb, thm: dual rsk aii} The assignment $(T,w_1,\dots,w_l) \mapsto (P(T,w_1,\dots,w_l),Q(T,w_1,\dots,w_l))$ gives rise to a bijection
    \begin{align*}
      \mathrm{dRSK}^{A\mathrm{II}}: &Sp\mathrm{T}_{2n}(\nu) \times \mathcal{W}_{2n}^>(k_1) \times \cdots \times \mathcal{W}_{2n}^>(k_l) \\
      &\to \bigsqcup_{\xi \in \mathrm{Par}_{\leq n}} (Sp\mathrm{T}_{2n}(\xi) \times \{ U \in \mathrm{RSOT}_{n,l}(\nu,\xi) \mid c(U) = (k_1,\dots,k_l) \}).
    \end{align*}
    \item\label{item: mod, thm: dual rsk aii} There exists a $\mathbf{U}^\imath$-module isomorphism
    \begin{align*}
      \mathrm{dRSK}^{A\mathrm{II}}: &V^\imath(\nu) \otimes V(1^{k_1}) \otimes \cdots \otimes V(1^{k_l}) \\
      &\to \bigoplus_{\xi \in \mathrm{Par}_{\leq n}} V^\imath(\xi) \otimes \mathbb{Q}(q) \{ U \in \mathrm{RSOT}_{n,l}(\nu,\xi) \mid c(U) = (k_1,\dots,k_l) \}
    \end{align*}
    such that
    \[
      \mathrm{dRSK}^{A\mathrm{II}}(b^\imath_T \otimes b_{w_1} \otimes \cdots \otimes b_{w_l}) \equiv_\infty b^\imath_{P(T,w_1,\dots,w_l)} \otimes Q(T,w_1,\dots,w_l)
    \]
    for all $T \in Sp\mathrm{T}_{2n}(\nu)$, $w_i \in \mathcal{W}_{2n}^>(k_i)$ with $i \in [l]$.
  \end{enumerate}
\end{thm}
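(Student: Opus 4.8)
The plan is to deduce both assertions from the dual Pieri rule (Proposition \ref{prop: dual pieri aii}) by induction on $l$, in exactly the same manner as Theorems \ref{thm: rs aii} and \ref{thm: rsk aii} are deduced from Propositions \ref{prop: qua Ber row ins} and \ref{prop: pieri aii}. The case $l=0$ of either assertion is vacuous: the only row-strict oscillating tableau of width $0$ is the one-term sequence $(\nu)$, which forces $\xi=\nu$, and $P(T)=T$, $Q(T)=(\nu)$.

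For assertion \eqref{item: comb, thm: dual rsk aii}, assume $l\geq 1$ and factor the source as $\bigl(Sp\mathrm{T}_{2n}(\nu)\times\mathcal{W}_{2n}^>(k_1)\times\cdots\times\mathcal{W}_{2n}^>(k_{l-1})\bigr)\times\mathcal{W}_{2n}^>(k_l)$. By the induction hypothesis, the first parenthesized factor is in bijection, via $(T,w_1,\dots,w_{l-1})\mapsto(P(T,w_1,\dots,w_{l-1}),Q(T,w_1,\dots,w_{l-1}))$, with $\bigsqcup_{\zeta}Sp\mathrm{T}_{2n}(\zeta)\times\{U'\in\mathrm{RSOT}_{n,l-1}(\nu,\zeta)\mid c(U')=(k_1,\dots,k_{l-1})\}$. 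For each such $\zeta$, apply the first assertion of Proposition \ref{prop: dual pieri aii}, with $\nu$ replaced by $\zeta$ and $k$ by $k_l$, to $Sp\mathrm{T}_{2n}(\zeta)\times\mathcal{W}_{2n}^>(k_l)$; this produces the symplectic tableau $P(P(T,w_1,\dots,w_{l-1}),w_l)$ of some shape $\xi$ together with a triple $(\zeta,\nu^l,\xi)$ subject to $\zeta\overset{\text{ver}}{\subseteq}\nu^l\overset{\text{ver}}{\supseteq}\xi$ and $|\nu^l/\zeta|+|\nu^l/\xi|=k_l$. Concatenating $U'$ with this triple gives a sequence of partitions that, by Definition \ref{def: rsot}, is exactly a member of $\mathrm{RSOT}_{n,l}(\nu,\xi)$, and its content is $(k_1,\dots,k_l)$. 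Since each of the two steps is a bijection — and the backward direction of the second step requires only the triple $(\zeta,\nu^l,\xi)$, which is recorded inside the concatenated row-strict oscillating tableau — the composite $(T,w_1,\dots,w_l)\mapsto(P(T,w_1,\dots,w_l),Q(T,w_1,\dots,w_l))$ is the asserted bijection.

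For assertion \eqref{item: mod, thm: dual rsk aii}, assume $l\geq 1$ and write
\[
  V^\imath(\nu)\otimes V(1^{k_1})\otimes\cdots\otimes V(1^{k_l}) = \bigl(V^\imath(\nu)\otimes V(1^{k_1})\otimes\cdots\otimes V(1^{k_{l-1}})\bigr)\otimes V(1^{k_l}).
\]
Applying the induction hypothesis to the parenthesized factor and tensoring the resulting isomorphism with $\mathrm{id}_{V(1^{k_l})}$ identifies this module, as a $\mathbf{U}^\imath$-module, with $\bigoplus_{\zeta}\bigl(V^\imath(\zeta)\otimes V(1^{k_l})\bigr)\otimes\mathbb{Q}(q)\{U'\in\mathrm{RSOT}_{n,l-1}(\nu,\zeta)\mid c(U')=(k_1,\dots,k_{l-1})\}$; here we use that the oscillating-tableau factor carries the trivial $\mathbf{U}$-action, so it may be moved past the $V(1^{k_l})$-factor. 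Now apply the second assertion of Proposition \ref{prop: dual pieri aii} to each summand $V^\imath(\zeta)\otimes V(1^{k_l})$ and reindex the direct sum over $\xi$, using assertion \eqref{item: comb, thm: dual rsk aii} to recognize the index set as $\{U\in\mathrm{RSOT}_{n,l}(\nu,\xi)\mid c(U)=(k_1,\dots,k_l)\}$. Tracking a basis vector along this composition gives, modulo $\equiv_\infty$ at each step,
\begin{align*}
  b^\imath_T\otimes b_{w_1}\otimes\cdots\otimes b_{w_l} &\mapsto \bigl(b^\imath_{P(T,w_1,\dots,w_{l-1})}\otimes b_{w_l}\bigr)\otimes Q(T,w_1,\dots,w_{l-1}) \\
  &\mapsto b^\imath_{P(T,w_1,\dots,w_l)}\otimes Q(T,w_1,\dots,w_l),
\end{align*}
where the last arrow recombines $Q(P(T,w_1,\dots,w_{l-1}),w_l)$ with $Q(T,w_1,\dots,w_{l-1})$ according to the definition of $Q(T,w_1,\dots,w_l)$. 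All maps involved are $\mathbf{U}^\imath$-module homomorphisms respecting the almost orthonormal bases, so $\equiv_\infty$ propagates; bijectivity of the index sets (assertion \eqref{item: comb, thm: dual rsk aii}) shows the composite is an isomorphism.

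The proof is therefore a routine induction once Proposition \ref{prop: dual pieri aii} is available. I anticipate the only points needing care to be the combinatorial bookkeeping — checking that splicing $(l-1)$-step data with one-step data yields precisely the row-strict oscillating tableau of Definition \ref{def: rsot} with the prescribed content — and keeping straight that, for a \emph{strictly decreasing} word, one counts \emph{unbarred} terminal rows and obtains the alternating \emph{vertical}-strip chain \eqref{eq: ver osc cond} (by the strict monotonicity of terminal rows, Lemma \ref{lem: ber row ins} \eqref{item: col, lem: ber row ins}), in contrast to the horizontal-strip chain of the non-dual RSK in Theorem \ref{thm: rsk aii}. No genuine obstacle is expected.
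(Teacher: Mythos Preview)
Your proposal is correct and follows precisely the approach indicated in the paper, which simply states that the result is proved by induction on $l$ using Proposition~\ref{prop: dual pieri aii}. You have supplied the details of the inductive step and the bookkeeping that the paper leaves implicit, but the underlying argument is the same.
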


\begin{proof}
  The assertion can be proved by induction on $k$ and by means of Proposition \ref{prop: dual pieri aii}.
\end{proof}

\end{document}